\theoremstyle{plain}
\newtheorem{theorem}{Theorem}[section]
\newtheorem{lemma}[theorem]{Lemma}
\newtheorem{corollary}[theorem]{Corollary}
\newtheorem{proposition}[theorem]{Proposition}
\newtheorem{definition}[theorem]{Definition}
\newtheorem{remark}[theorem]{Remark}
\numberwithin{equation}{section}
\numberwithin{figure}{section}
\numberwithin{table}{section}
\newcommand{\R}{\mathbb{R}}
\newcommand{\C}{\mathbb{C}}
\newcommand{\N}{\mathbb{N}}
\newcommand{\Z}{\mathbb{Z}}
\newcommand{\bigO}{\mathcal{O}}
\newcommand{\abs}[1]{\left|#1\right|}
\newcommand{\norm}[1]{\left\|#1\right\|}
\newcommand{\sinc}{\operatorname{sinc}}
\newcommand{\dom}{\operatorname{dom}}
\newcommand{\atan}{\tan^{-1}}
\newcommand{\asin}{\sin^{-1}}
\newcommand{\asinh}{\sinh^{-1}}
\def\ii{i}
\def\id{\mathrm{I}}
\def\LL{\mathcal{L}}
\def\CC{\mathcal{C}}
\newcommand{\HHx}{\mathbb{V}_{h}}
\renewcommand{\Re}{\operatorname{Re}}
\renewcommand{\Im}{\operatorname{Im}}
\def\Nquad{\mathcal{N}_{\textrm{q}}}
\def\kquad{k}
\newcommand{\Dexp}{D^{\mathrm{exp}}_\delta}
\newcommand{\Dt}{D_{d(\theta)}}
\newcommand{\Ht}{H_\theta}
\pgfplotsset{compat=1.15}
\newcommand{\includeTikzOrEps}[1]{\tikzexternalenable \tikzsetnextfilename{#1}  {\include{figures/#1}} \tikzexternaldisable}
\newcommand{\includeTikzOrEps}[1]{\includegraphics{figures_pdf/#1}}
\newcommand{\psits}{\psi_{\sigma,\theta}}
\newcommand{\varphits}{\varphi_{\sigma,\theta}}
\newcommand{\varphith}{\varphi_{\frac{1}{2},\theta}}
\newcommand{\psito}{\psi_{1,\theta}}
\newcommand{\psith}{\psi_{\frac{1}{2},\theta}}
\title{Double exponential quadrature for fractional diffusion }
\author{ Alexander Rieder\thanks{ Fakult\"at f\"ur Mathematik, University of Vienna, 1090 Vienna, Austria,
    \href{mailto:alexander.rieder@univie.ac.at}{alexander.rieder@univie.ac.at}}}
\date{\today}
\begin{document}
\maketitle
\begin{abstract}
  We introduce a novel discretization technique for both elliptic and parabolic fractional
  diffusion problems based on double exponential quadrature formulas and the Riesz-Dunford functional calculus.
  Compared to related schemes, the new method provides faster convergence with fewer parameters that need
  to be adjusted to the problem. The scheme takes advantage of any additional smoothness
  in the problem without requiring a-priori knowledge to tune parameters appropriately.
  We prove rigorous convergence results for both, the case of finite regularity data as well as
  for data in certain Gevrey-type classes. We confirm our findings with numerical tests.
\end{abstract}

\section{Introduction}
The study of processes governed by fractional linear operators has gathered
significant interest over the last few  years~\cite{bv16,v17,whatis} with
applications ranging from physics~\cite{ab17b} to image processing~\cite{gh15,go08,ab17b},
inverse problems~\cite{kr19} and more. See~\cite{collection_of_applications}
for an overview of applications in different fields.

There are multiple (non-equivalent) ways of defining fractional powers of operators. We mention
the integral fractional Laplacian and the spectral definition~\cite{whatis}.
In this paper, we focus the spectral definition which is equivalent to the functional calculus definition.

For discretization of such problems, both  stationary and time dependent, multiple approaches have
been presented. A summary of the most common can be found in~\cite{bbnos18,whatis}. They can be broadly
distinguished into three categories. First, there are approaches
which try to apply the methods of finite- and boundary element methods to  integral
formulation of the fractional Laplacian~\cite{ab17,abh19}. The second class of methods
uses the Caffarelli-Silvestre extension to reformulate the problem as a PDE posed in one additional
spatial dimension. This problem is then treated by standard finite element techniques
\cite{nos15,pde_approach_apriori,pde_frac_parabolic,tensor_fem,mpsv17,hp_for_heat}.
The third big class of discretization schemes, and the one our new scheme is part of,
was first introduced in~\cite{bp15} and later extended to more general operators~\cite{blp_accretive}
and time dependent problems~\cite{bonito_pasciak_parabolic,blp17,MR20_hp_sinc}. They are based on
the Riesz-Dunford calculus (sometimes also referred to as Dunford-Taylor or Riesz-Taylor) and employ
a $\sinc$ quadrature scheme to discretize the appearing contour integral.
$\sinc$ quadrature, and overall $\sinc$-based numerical methods are less well known than
their polynomial based counterparts, but provide rapidly converging schemes~\cite{stenger03,sinc_book}
with very easy implementation. The quadrature relies on appropriate coordinate transforms in order to
yield analytic, rapidly decaying integrands  over the real line and then discretization using
the trapezoidal quadrature rule.
In~\cite{tm73} it was realized that by adding an additional $\sinh$-transformation, it is possible to get
an even faster convergence for certain integrals, namely instead of convergence of the form
$e^{-\sqrt{\Nquad}}$, it is possible to get rid of the square root and obtain rates of the form $e^{-\frac{\Nquad}{\ln{\Nquad}}}$.
Further developments in this direction are summarized in~\cite{mori91}. Such schemes are commonly referred to
as double exponential quadrature or $\sinh$-$\operatorname{tanh}$ quadrature.

In this paper we investigate whether the discretization of the Riesz-Dunford integral can benefit from using
a double exponential quadrature scheme instead of the more established $\sinc$-quadrature.
We present a scheme that retains all the advantages of \cite{blp_accretive,bonito_pasciak_parabolic,blp17}
while delivering improved convergence rates.
Namely, the scheme is very easy to implement if a solver for elliptic finite element problems is available.
It is almost trivially parallelizable, as the main cost consists of solving a series of independent
elliptic problems. In addition, it provides (compared to $\sinc$-methods)
superior accuracy over a wide range of applications and does not require subtle tweaking of parameters
in order to get good performance. Instead it will automatically pick up any additional smoothness
of the underlying problem to give improved convergence. Since for each quadrature
point an elliptic FEM problem needs to be solved, reducing the number of quadrature points
greatly increases performance of the overall method.

The paper is structured as follows.
After fixing the model problem and notation in Section~\ref{sect:model_problem},
Section~\ref{sec:de_formulas} introduces the
double exponential formulas in an  abstract way and we collect some known properties. In addition,
we provide
one small convergence result which, to our knowledge, has not yet appeared in the literature;
we show that the double exponential formulas at least provide comparable convergence of order $e^{-\sqrt{\Nquad}}$
even without requiring additional analyticity compared to standard $\sinc$ methods.
In Section~\ref{sect:elliptic}, we look at the case of a purely elliptic problem without time dependence.
It will showcase the techniques used and provide the building block for the  more involved problems later on.
In Section~\ref{sect:parabolic}, we then consider what happens if we move into the time-dependent regime.
Section~\ref{sect:numerics} provides extensive numerical evidence supporting the theory. We also compare
our new method to the standard $\sinc$-based methods.
Finally, Appendix~\ref{sect:properties_fo_psi} collects some properties of the coordinate transform
involved. The proofs and calculations are elementary but somewhat lengthy and thus have been relegated
to the appendix in order to not impact readability of the article.

Throughout this work we will encounter two types of error terms. For those of the
form $e^{-\frac{\gamma}{k}}$ we will be content with not working out the constants $\gamma$ explicitly. For the
more important  terms of the form $e^{-\frac{\gamma'}{\sqrt{k}}}$ we will
derive explicit constants $\gamma'$ which prove sharp in several examples of Section~\ref{sect:numerics}.

We close with a remark on notation. Throughout this text, we write $A \lesssim B$ to mean
that there exists a constant $C>0$, which is independent of the main quantities of interest
like number of quadrature points $\Nquad$ or step size $k$ such that
$A \leq C B$. The detailed dependencies of $C$ are specified in the context. We write
$A \sim B$ to mean $A \lesssim B$ and $B \lesssim A$.

\paragraph{Acknowledgments:} The author would like to thank J.~M.~Melenk for the many fruitful discussions on the
topic.
Financial support was provided by the Austrian Science Fund (FWF) through the 
special research program ``Taming complexity in partial differential systems'' (grant SFB F65)
and the project P29197-N32.

\subsection{Model problem and notation}
\label{sect:model_problem}

In this paper, we consider problems of applying holomorphic functions $f$ to self-adjoint operators, for example the Laplacian.
The two large classes of problems treated in this paper stem from the study of fractional diffusion problems, both in the stationary
as well as in the transient version.
Given a bounded Lipschitz domain $\Omega$, we consider the self adjoint operator
$$
\LL u:=-\operatorname{div}(\mathfrak{A} \nabla u) + \mathfrak{c} u,
$$
where $\mathfrak{A} \in L^{\infty}(\Omega;\R^{d\times d})$ is uniformly SPD and $\mathfrak{c} \in L^{\infty}(\Omega)$ satisfies $\mathfrak{c}\geq 0$ almost everywhere.
Unless otherwise specified, the domain $\operatorname{dom}(\LL)$ is always taken to include homogeneous
Dirichlet boundary conditions.

Given the eigenvalues and eigenfunctions of $\LL$ equipped with homogeneous Dirichlet boundary conditions denoted by $(\lambda_j,v_j)_{j=0}^{\infty}$,
we define the spaces
\begin{align}
  \label{eq:def_HH}
  \mathbb{H}^{\beta}(\Omega):=\Big\{ u \in L^2(\Omega): \norm{u}_{\mathbb{H}^\beta(\Omega} <\infty \Big\}
  \qquad\text{with} \qquad
  \norm{u}_{\mathbb{H}^{\beta}(\Omega)}:=\sum_{j=0}^{\infty}{\lambda_j^{\beta} \abs{(u,v_j)_{L^2(\Omega)}}^2}.  
\end{align}
One natural way of defining a functional calculus for the operator $\LL$ is based on the spectral decomposition. 
\begin{definition}[Spectral calculus]
  \label{def:spectral_calculus}
  Let $\mathcal{O} \subseteq \R_+$ such that $\sigma(\LL) \subseteq \mathcal{O}$.
  Let $g: \mathcal{O} \to \C$ be continuous with $\abs{g(z)}\lesssim (1+\abs{z})^{\mu}$ for $\mu \in \R$. We define
  for $u \in \mathbb{H}^{2\mu}(\Omega)$:
  \begin{align*}
    g(A)u:=\sum_{j=0}^{\infty}{g(\lambda_j) \big(u,v_j\big)_{L^2(\Omega)} \, v_j }.
  \end{align*}
\end{definition}

An alternative definition for holomorphic functions, which will prove more useful for approximation is 
given in the following Definition. It can be shown, see also~\cite[Section 2]{blp17},
that the operators resulting
from Definition~\ref{def:spectral_calculus} and Definition~\ref{def:riesz-dunford} coincide.
\begin{definition}[Riesz-Dunford calculus]
  \label{def:riesz-dunford}
  Fix parameters $\sigma\in \{1/2,1\}$, $\theta \geq 1$ and $\kappa>0$.
  Let $\mathcal{O} \subseteq \C$ such that $\C_{+}:=\{ z \in \C: \Re(z)>0\} \subseteq \mathcal{O}$.
  Let $g: \mathcal{O} \to \C$ be holomorphic with $\abs{g(z)}\lesssim (1+\abs{z})^{-\mu}$
  for $\mu \geq 0$. We define
  \begin{align}
    \label{eq:riesz_dunford}
    g(A):=\frac{1}{2\pi\ii}\int_{\CC}{g(z)\big(\LL-z\big)^{-1} \,dz},
  \end{align}
  where the integral is taken in the sense of Riemann, and $\CC$ is the smooth path
  $$
  \CC:=\Big\{\kappa\big(\cosh(\sigma w) + \theta \sinh(w)\big) \quad \text{ for } w \in\R \Big\}.
  $$
  The parameter $\kappa>0$ is taken sufficiently small such that the spectrum of $\LL$ satisfies
  $\kappa<\sigma(\LL)$.
\end{definition}
\begin{remark}
  The choice of path in Definition~\ref{def:riesz-dunford} is somewhat arbitrary. It is only
  required to encircle the spectrum of $\LL$ with winding number $1$.
  Throughout this paper, we will only ever use
  the same path and thus make it part of our definition.
\end{remark}
\begin{remark}
  All our results easily generalize to  more general positive definite and self-adjoint
  operators $\mathcal{L}$ over a general Hilbert space $\mathcal{X}$, as long as an eigen-decomposition
  is available. The $\mathbb{H}^\beta$-norms in \eqref{eq:def_HH} are
  then defined using the $\mathcal{X}$ inner product.
\end{remark}

\section{Double exponential formulas}
\label{sec:de_formulas}
In this section, we analyze the quadrature error when applying a double exponential formula for discretizing certain integrals.
The main role will be played by the following coordinate transform:
\begin{align}
  \label{eq:def_psi}
  \psi_{\theta,\sigma}(y):=\kappa\Big[\cosh\Big(\frac{\sigma \pi}{2} \sinh(y)\Big) + \ii \theta \sinh\Big(\frac{\pi}{2} \sinh(y)\Big)\Big].
\end{align}
We will focus on the cases $\sigma \in \big\{\frac{1}{2},1 \big\}$ and $\theta \geq 1$. $\kappa$ is again
taken sufficiently small as in Definition~\ref{def:riesz-dunford}.

For $\theta \geq 1$, $\delta>0$ we define the sets
\begin{align}
  \label{eq:def_dt_dexp}
  \Dt:=\Big\{z \in \C: \abs{\Im(z)} < d(\theta) \Big\}, \qquad \text{and} \qquad 
  \Dexp:=\Big\{z \in \C: \abs{\Im(z)} < \delta e^{-\abs{\Re(z)}}  \Big\},
\end{align}
where for each $\theta$,  $d(\theta)$ is a constant which is assumed sufficiently small.

Since all the proofs analyzing the properties of $\psits$ are elementary but somewhat lengthy and cumbersome,
they have been relegated to Appendix~\ref{sect:properties_fo_psi}. The most important
properties are, that $y\mapsto \psits(y)$ for $y\in \R$ traces the contour in the definition of the Riesz Dunford calculus
(see~Definition~\ref{def:riesz-dunford}), and that it is analytic in $\Dt$.
The other important results concern the points where $\psits$ crosses the real axis, as these points correspond to
(possible) poles in the integrand of Definition~\ref{def:riesz-dunford}. The location of these points, as well as
other important estimates are collected in Lemma~\ref{lemma:psi_hits_lambda}.
Roughly summarizing, the finitely many points satisfying $\psits(y)=\lambda$ have distance $1/\ln(\lambda)$ from the real axis. Away
from such points $\abs{\psits(y)-\lambda}\gtrsim\lambda$ holds and for $y \to \pm \infty$ the function $\psits$ behaves doubly-exponential (Lemma~\ref{lemma:psi_growth}).

Because the quadrature operator will be the main workhorse, we introduce the following notation:
\begin{definition}
  \label{def:quadrature_operators}
  Let $\LL$ be a linear (possibly unbounded) operator
  on a Banach space $\mathcal{X}$ and
  $\mathcal{O}\subseteq \C$ such that $\C_+ \subseteq \mathcal{O}$.
  For $g: \mathcal{O} \to \C$ holomorphic as in Definition~\ref{def:riesz-dunford},
  $k \geq 0$ and $\Nquad \in \N \cup \{\infty\}$ we write
  \begin{align}
    \label{eq:def_quadrature_operator}
    Q^{\LL}(g,\Nquad)u&:=
                        \frac{1}{2\pi \ii} \sum_{j=-\Nquad}^{\Nquad}{g(\psits(j k))  \psits'( j k)(\LL - \psits(j k))^{-1} u}
                        \qquad \forall u \in \mathcal{X}
  \end{align}
  and $Q^{\LL}(g):=Q^{\LL}(g,\infty)$ for the case where no cutoff is performed.  
  The quadrature error will be denoted by
  \begin{align}
    \label{eq:def_quadrature_error}
    E^{\LL}(g,\Nquad)
    &:=g(\LL)u - Q^{\LL}(g,\Nquad)u,  \qquad \forall u \in \dom(g(\LL))
  \end{align}
  where $g(\LL)$ is given via the Riesz-Dunford integral~\ref{def:riesz-dunford}.
  Again, we write $E^{\LL}(g):=E^{\LL}(g,\infty)$.
\end{definition}
\begin{remark}
  In Definition~\ref{def:quadrature_operators}, we will often work
  with the special case $\LL=\lambda$ or $\LL=\lambda \id$ for $\lambda \in \R$.
  For simplicity, in both cases we write
  $Q^{\lambda}(g,\Nquad)$, $E^{\lambda}(g,\Nquad)$ etc. It will be clear from context whether
  the scalar or operator valued operation is needed.
\end{remark}

\subsection{Abstract analysis of $\operatorname{sinc}$-quadrature }
In this section, we collect some results
on $\sinc$-quadrature formulas.
\begin{remark}
  As is common in the literature, we define the $\sinc$ function as
  $$
  \sinc(\zeta):=\begin{cases}
    \frac{\sin(\pi \zeta)}{\pi \zeta} & \zeta \neq 0 \\
    1 & \zeta=0.
  \end{cases}
  $$
\end{remark}

The following result is the main work-horse when analyzing $\operatorname{sinc}$-quadrature
schemes. In order to reduce the required notation, we use a simplified version of
\cite[Problem 3.2.6]{stenger03}.
\begin{proposition}[{Bialecki, see \cite[Problem 3.2.6 and Theorem 3.1.9]{stenger03}}]
  \label{prop:sinc_with_poles}
  We make the following assumptions on $g$:
  \begin{enumerate}[(i)]
  \item
    \label{def:sinc_functions:i}
    $g$ is a meromorphic function on the infinite strip $\Dt$.
    It is also continuous on $\partial \Dt$.
    The poles $\big(p_{\ell}\big)_{\ell=1}^{N_p}$  are all simple and located in $\Dt \setminus \R$.
  \item
    \label{def:sinc_functions:ii}
    There exists a constant $C>0$ independent of $y \in \R$ such that
    for sufficiently large $y > 0$,
    \begin{align}
      \int_{-d(\theta)}^{d(\theta)}{\abs{g(y+\ii w)} \,dw}&\leq C.
    \end{align}
  \item
    \label{def:sinc_functions:iii}
    We have
    \begin{align}
      N(g,\Dt):=\int_{-\infty}^{\infty}{\abs{g(y+\ii d(\theta)} + \abs{g(y-\ii d(\theta))}\,dy} < \infty.
    \end{align}   
  \end{enumerate}
  Denote by $\operatorname{res}(g;p_\ell)$ the residue of $g$ at $p_\ell$,
  and define
  $
  \gamma(k;p_\ell):=\frac{1}{\sin(\pi p_\ell/k)}.
  $
  
  Then there exists a generic constant, such that
  for all $k > 0$, using $s_\ell:=\operatorname{sign}(p_\ell)$:
  \begin{align}
    \label{eq:sinc_with_poles}
    \abs{
    \int_\R{g(t) \, dt} - k \sum_{n=-\infty}^{\infty}{g(k \, n)}
    - \pi \sum_{\ell=1}^{N_p} {e^{\ii  \frac{s_\ell \pi p_\ell}{k}}  \operatorname{res}(g;p_\ell)  \gamma(k;p_\ell)
      }
    }
    &\lesssim \frac{e^{-2\pi d(\theta)/k}}{1-e^{-2\pi d(\theta)/k}} N(g,\Dt). 
  \end{align}
\end{proposition}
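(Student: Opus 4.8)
The plan is to reduce the statement with poles to the pole-free trapezoidal quadrature estimate (Theorem 3.1.9 of~\cite{stenger03}) by subtracting off an auxiliary meromorphic function that carries exactly the same poles and residues as $g$, but decays well enough that its contribution over the strip can be evaluated explicitly. The natural candidate is a sum of shifted cotangent (or, equivalently, $\sinc$-type) kernels: for each pole $p_\ell$ set
\begin{align*}
  \phi_\ell(t):=\operatorname{res}(g;p_\ell)\,\frac{\pi}{k}\,\cot\!\Big(\frac{\pi(t-p_\ell)}{k}\Big),
\end{align*}
which is $k$-periodic, has a simple pole at $t=p_\ell$ with residue $\operatorname{res}(g;p_\ell)$, and whose only other poles are at $p_\ell+k\Z$, all of which lie on the horizontal line $\Im(z)=\Im(p_\ell)$ and hence can be arranged to stay off the nodes $kn$. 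One then writes $g=\widetilde g+\sum_\ell \phi_\ell$ with $\widetilde g$ holomorphic on $\Dt$ (the poles having been cancelled), applies the pole-free estimate to $\widetilde g$, and treats the contributions of the $\phi_\ell$'s by hand.

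First I would verify that $\widetilde g:=g-\sum_\ell \phi_\ell$ satisfies the hypotheses of the pole-free theorem: holomorphy on $\Dt$ is immediate from the residue matching, continuity on $\partial\Dt$ follows since $d(\theta)$ is chosen small enough that no $\phi_\ell$ has a pole with $\abs{\Im}=d(\theta)$, and the decay/integrability conditions~(\ref{def:sinc_functions:ii}) and~(\ref{def:sinc_functions:iii}) carry over because $\cot$ is bounded away from its poles and in fact $\phi_\ell(y\pm\ii d(\theta))$ decays exponentially in $\abs{y}$, so $N(\widetilde g,\Dt)\lesssim N(g,\Dt)$ up to the (finitely many, fixed) pole corrections — this is where one must be a little careful to keep the final bound in terms of $N(g,\Dt)$ and a generic constant. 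The pole-free theorem then gives
\begin{align*}
  \Big|\int_\R \widetilde g(t)\,dt - k\sum_{n}\widetilde g(kn)\Big|\lesssim \frac{e^{-2\pi d(\theta)/k}}{1-e^{-2\pi d(\theta)/k}}\,N(\widetilde g,\Dt).
\end{align*}

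Next I would evaluate the pieces coming from $\phi_\ell$ explicitly. The trapezoidal sum $k\sum_n \phi_\ell(kn)$ is, up to the regularization at $n$ with $kn$ near $p_\ell$ (there is none, since $p_\ell\notin\R$), a geometric-type series in $e^{2\pi\ii p_\ell/k}$ that sums to a closed form involving $\cot$ or equivalently $\frac{1}{\sin}$; the integral $\int_\R \phi_\ell(t)\,dt$ is computed by closing the contour in the appropriate half-plane (chosen according to $s_\ell=\operatorname{sign}(\Im p_\ell)$, matching the sign convention in the statement) and picking up exactly the residues, yielding the factor $e^{\ii s_\ell \pi p_\ell/k}$ together with $\gamma(k;p_\ell)=1/\sin(\pi p_\ell/k)$. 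Assembling $\int_\R g=\int_\R\widetilde g+\sum_\ell\int_\R\phi_\ell$ and $k\sum_n g(kn)=k\sum_n\widetilde g(kn)+\sum_\ell k\sum_n\phi_\ell(kn)$, the explicit $\phi_\ell$ contributions combine to give precisely the residue-correction term $\pi\sum_\ell e^{\ii s_\ell\pi p_\ell/k}\operatorname{res}(g;p_\ell)\gamma(k;p_\ell)$ in~\eqref{eq:sinc_with_poles}, and the remainder is controlled by the displayed bound.

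The main obstacle I anticipate is purely bookkeeping rather than conceptual: getting the complex exponentials, the signs $s_\ell$, and the normalization of $\gamma(k;p_\ell)$ to line up exactly as stated — in particular tracking which half-plane one closes the contour in for $\int_\R\phi_\ell$, and reconciling the closed form of the trapezoidal sum of a cotangent kernel with the $1/\sin$ in the statement. A secondary nuisance is ensuring the auxiliary kernels $\phi_\ell$ do not accidentally place a pole at a quadrature node or on $\partial\Dt$; since the $p_\ell$ are finitely many and $d(\theta)$ and the node spacing $k$ are at our disposal (and the estimate is claimed for all $k>0$, with the pole-free constant generic), one argues by density/continuity that the generic-$k$ identity extends, or simply notes that $\Im(p_\ell)\neq 0$ already keeps the periodic copies $p_\ell+k\Z$ off the real line. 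Everything else is an application of the quoted pole-free estimate plus elementary residue calculus.
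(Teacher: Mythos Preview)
The paper does not supply its own proof of this proposition; it is quoted as a known result from Stenger's book. That said, your proposed route has a genuine gap that prevents it from going through.

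The auxiliary kernels $\phi_\ell(t)=\operatorname{res}(g;p_\ell)\,\frac{\pi}{k}\cot\!\big(\pi(t-p_\ell)/k\big)$ are $k$-periodic in $t$, and this causes two fatal problems. First, subtracting $\phi_\ell$ does \emph{not} leave a function holomorphic on $\Dt$: you cancel the pole at $p_\ell$ but simultaneously introduce infinitely many new simple poles at $p_\ell+k\Z$, all on the horizontal line $\Im(z)=\Im(p_\ell)$ inside $\Dt$. Hence $\widetilde g$ is not in the pole-free class and Theorem~3.1.9 cannot be invoked. Second, $\phi_\ell$ does not decay: for fixed imaginary part $c\neq 0$ the function $x\mapsto\cot(x+\ii c)$ is $\pi$-periodic and merely bounded, so your claim that $\phi_\ell(y\pm\ii d(\theta))$ decays exponentially is incorrect, and $N(\phi_\ell,\Dt)=\infty$. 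Worse, since $\cot(\pi n-\pi p_\ell/k)=\cot(-\pi p_\ell/k)$ for every integer $n$, the trapezoidal sum $k\sum_n\phi_\ell(kn)$ is a divergent sum of identical nonzero terms, and $\int_\R\phi_\ell(t)\,dt$ likewise fails to converge. The individual pieces you want to ``evaluate explicitly'' do not exist.

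The argument that actually works---and whose template you can see in the paper's own proof of Lemma~\ref{lemma:sinc_in_dexp}---does not subtract anything. One applies the residue theorem directly to
\[
  h(y)=\frac{\sin(\pi t/k)\,g(y)}{(t-y)\,\sin(\pi y/k)}
\]
over a large rectangle in $\overline{\Dt}$: the poles of $1/\sin(\pi y/k)$ on $\R$ give the trapezoidal sum, the pole at $y=t$ gives $g(t)$, and the poles of $g$ at the $p_\ell$ produce exactly the residue-correction terms. After passing to the infinite rectangle (this is where assumption~(\ref{def:sinc_functions:ii}) is used) and integrating over $t\in\R$ using $\int_\R\frac{\sin(\pi t/k)}{t-y}\,dt=-\pi\,e^{\ii\,\operatorname{sign}(\Im y)\,\pi y/k}$, the boundary contribution becomes the $N(g,\Dt)$ term and the interior residues assemble into the stated pole sum.
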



  

Proposition~\ref{prop:sinc_with_poles} requires certain decay properties for the integrand in
a complex strip,
and thus is not always applicable.
As is shown in Appendix~\ref{sect:properties_fo_psi}, the transformation $\psits$ maps 
partly into the left-half plane. One can even show that
the real part changes sign infinitely many times when evaluating along a line
of fixed imaginary part. If we therefore consider the case when $f(z):=e^{-z}$ is the exponential function,
this means that $f \circ \psi$ is exponentially increasing in such regions. This
puts showing estimates of the form required in
Proposition~\ref{prop:sinc_with_poles}~(\ref{def:sinc_functions:iii}) out of reach.

On the other hand, Lemma~\ref{lemma:psi:properties:right_halfplane}
shows that for $\sigma=1$, restricted to the domain $\Dexp$,
the map $\psits$ stays in the right half-plane.
Here the exponential function is decreasing.
Similarly, the Mittag-Leffler function $e_{\alpha,\mu}$ is decreasing on slightly larger sectors,
allowing for the choice of $\sigma=1/2$ if $\alpha < 1$.
This motivates
the following modification of Proposition~\ref{prop:sinc_with_poles}.

\begin{lemma}
  \label{lemma:sinc_in_dexp}
  Assume that $g: \Dexp \to \C$ is holomorphic
  and is doubly-exponentially
  decreasing, i.e., there exist constants $C_g > 0$, $\mu >0,$ such that
  $g$ satisfies 
  \begin{align}
    \label{eq:sinc_in_dexp:de_requirement}
    \abs{g(y)}\leq C_g \exp\big(-\mu e^{\Re(y)}\big) \qquad \forall y \in \Dexp.
  \end{align}

  Then, for all $\varepsilon > 0$,
  there exists a constant $C>0$ which is independent of $k$, $\mu$ and $g$
  such that the following error estimate holds:
  \begin{align}
    \label{eq:sinc_in_dexp:statement}
    \abs{\int_{\R}{g(t) dt} -
    k\sum_{n=-\infty}^{\infty}{g(k\,n)}}
    &\leq C C_g   k\,\exp\Big({- \sqrt{8\pi \delta} \,\frac{\sqrt{\mu-2\varepsilon}}{\sqrt{k}}}\,\Big).
  \end{align}
\end{lemma}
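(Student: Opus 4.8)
The plan is to deform the contour of integration into the domain $\Dexp$ where the double-exponential decay hypothesis~\eqref{eq:sinc_in_dexp:de_requirement} is available, and then balance the two competing error contributions by choosing the height of the contour optimally as a function of $k$. Concretely, the trapezoidal-rule error for a function holomorphic in a horizontal strip of half-width $d$ decays like $e^{-2\pi d/k}$ (this is the classical estimate underlying Proposition~\ref{prop:sinc_with_poles}, now with no poles present). However, $\Dexp$ is not a strip: its half-width shrinks like $\delta e^{-|\Re(z)|}$. So I would instead work on a truncated horizontal strip $\{|\Im(z)| < d,\ |\Re(z)| < L\}$ that fits inside $\Dexp$, which forces the compatibility constraint $d \le \delta e^{-L}$, i.e. $L \le \ln(\delta/d)$.

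The key steps, in order. First, fix a cutoff $L>0$ and a strip half-width $d := \delta e^{-L}$, so that the rectangle $R_L := (-L,L)\times(-d,d)$ lies in $\Dexp$. Second, split the error into (a) the trapezoidal error for the integral over $(-L,L)$ of the restriction of $g$, which by a Poisson-summation / residue-free version of Proposition~\ref{prop:sinc_with_poles} applied on $R_L$ is bounded by a constant times $e^{-2\pi d/k}\, N_L$, where $N_L$ is the line integral of $|g|$ along the two horizontal edges $\Im = \pm d$; and (b) the two tail pieces $\int_{|t|>L} |g(t)|\,dt$ plus the omitted quadrature nodes $k\sum_{|kn|>L}|g(kn)|$. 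Third, estimate the tails: on the real line $|g(t)| \le C_g \exp(-\mu e^{|t|})$, so $\int_{|t|>L}|g| \lesssim C_g \exp(-\mu e^{L})$ and similarly for the discrete tail (comparing the sum to the integral, using monotonicity of the bound for $|t|$ large). Fourth, estimate $N_L$: on the horizontal edges $\Re(y)$ ranges over $(-L,L)$, so $|g(y)| \le C_g \exp(-\mu e^{\Re(y)}) \le C_g$ crudely, giving $N_L \lesssim C_g L$; a slightly more careful bound using the $\varepsilon$ in the statement — shrinking the strip to half-width $(1-\text{something})d$ or absorbing a factor $e^{-\varepsilon e^{\Re y}}$ — is what produces the $\mu - 2\varepsilon$ rather than $\mu$ in the final rate. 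Fifth, combine: the total error is $\lesssim C_g\big(e^{-2\pi \delta e^{-L}/k} L + e^{-\mu e^{L}}\big)$, and now optimize over $L$. Setting the two exponents comparable, $2\pi\delta e^{-L}/k \sim \mu e^{L}$, gives $e^{2L} \sim 2\pi\delta/(\mu k)$, i.e. $e^{L} \sim \sqrt{2\pi\delta/(\mu k)}$, and plugging back yields both exponents of size $\sim \sqrt{2\pi\delta\mu/k} = \sqrt{2\pi\delta\mu}/\sqrt{k}$; tracking the constant $\sqrt{8\pi\delta}$ in the statement comes from writing $2\pi\delta e^{-L}/k + \mu e^L$ and minimizing the sum $a e^{-L} + b e^{L} \ge 2\sqrt{ab}$ with $a = 2\pi\delta/k$, $b=\mu$, giving $2\sqrt{2\pi\delta\mu/k}$ — but the bound must hold simultaneously for both pieces, so one balances them to each be $\ge \sqrt{ab}\cdot(\text{const})$, and the factor $8 = 4\cdot 2$ arises from the factor of $2$ in $2\pi d/k$ being split or from a factor-$2$ slack needed to dominate the polynomial prefactor $L$. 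The leftover factor $k$ in front of the exponential in~\eqref{eq:sinc_in_dexp:statement} is exactly this polynomial prefactor $L \sim \frac12\ln(2\pi\delta/(\mu k)) \lesssim k^{-0}$-ish absorbed, or more precisely a single residual power coming from $N_L$; and the role of $\varepsilon$ is to convert the $\mu$ in the sharp optimization into $\mu - 2\varepsilon$ so that this prefactor and the slack can be swallowed by a $k$-independent constant $C$.

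The main obstacle I expect is the bookkeeping in Step four and Step five: making the contour-shift argument rigorous when the strip width $d$ itself depends on $L$ (hence on $k$), and in particular verifying that the constant in the residue-free trapezoidal estimate on the truncated rectangle $R_L$ does not degrade as $L\to\infty$ — one must either invoke a version of the Euler–Maclaurin / Poisson-summation error bound that is uniform in the length of the interval, or handle the finite interval by periodization with controlled overlap. The other delicate point is justifying the interchange of the tail-of-sum and tail-of-integral with the contour deformation: one should first deform $\int_\R g = \int_{\Im = \pm d} g$ only on the middle portion and keep the tails on $\R$, which is legitimate because $g$ is holomorphic and doubly-exponentially small throughout $\Dexp$ so all the arcs closing the contour at $|\Re| = L$ contribute negligibly (their length is $2d$ and $|g|$ there is $\le C_g e^{-\mu e^L}$, already of the size of the tail term). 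Everything else is the elementary optimization sketched above.
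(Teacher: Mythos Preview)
Your strategy of balancing kernel-decay against $g$-decay is right, but the flat-strip decomposition loses a factor $2$ in the rate and hence cannot reach the stated constant $\sqrt{8\pi\delta}$. In your scheme the error splits as $C_g\big(e^{-2\pi\delta e^{-L}/k}+e^{-\mu e^L}\big)$: two \emph{separate} exponentials, which when balanced each have exponent $\sqrt{2\pi\delta\mu/k}$, giving only the rate constant $\sqrt{2\pi\delta}$. Your AM--GM remark ``$ae^{-L}+be^L\ge 2\sqrt{ab}$'' would help only if both exponents sat inside a \emph{single} exponential; for a sum of two exponentials the bound is governed by the smaller exponent, not their sum. The loss is structural: on a flat horizontal edge at height $d$ only the kernel factor $e^{-2\pi d/k}$ contributes to the rate, while the double-exponential decay of $g$ is spent merely making $N_L$ finite. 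There is also a technical gap in step~(a): Proposition~\ref{prop:sinc_with_poles} requires holomorphy on a full infinite strip, which fails here since $\{|\Im|<d\}$ leaves $\Dexp$ for $|\Re|>L$; truncating to $R_L$ and rerunning the residue argument yields an identity involving $\sinc$-weights and a truncated $t$-integral $\int_{-L}^{L}\frac{\sin(\pi t/k)}{t-y}\,dt$ that does not reduce cleanly to $\int_{-L}^L g-k\sum_{|kn|<L}g(kn)$.

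The paper resolves both issues by integrating along the \emph{curved} boundary $\partial\Dexp$ itself. Applying the residue theorem on the truncations $\{y\in\Dexp:|\Re(y)|\le N+\tfrac12\}$ and letting $N\to\infty$ (the vertical sides vanish since $g$ decays doubly-exponentially while $1/\sin(\pi y/k)$ grows only single-exponentially) produces, after integrating in $t$ and invoking the explicit evaluation $\int_\R\frac{\sin(\pi t/k)}{t-y}\,dt=-\pi e^{i\operatorname{sign}(\Im y)\pi y/k}$, the exact representation
\[
\int_\R g-k\sum_{n\in\Z}g(kn)=-\pi\int_{\partial\Dexp}\frac{g(y)}{\sin(\pi y/k)}\,e^{i\operatorname{sign}(\Im y)\pi y/k}\,dy.
\]
At $y=\xi+i\delta e^{-\xi}\in\partial\Dexp$ the kernel contributes $\lesssim k\,e^\xi\exp(-2\pi\delta e^{-\xi}/k)$ \emph{and simultaneously} $|g(y)|\le C_g\exp(-\mu e^\xi)$, so the exponents add inside one exponential; minimizing $-(\mu-2\varepsilon)e^\xi-2\pi\delta e^{-\xi}/k$ over $\xi$ (the $\varepsilon$ absorbs the linear $e^\xi$ prefactor and restores integrability) gives $-2\sqrt{2\pi\delta(\mu-2\varepsilon)/k}=-\sqrt{8\pi\delta}\,\sqrt{\mu-2\varepsilon}/\sqrt{k}$.
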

\begin{proof}
  We closely follow the proof of~\cite[Theorem 2.13]{sinc_book}, but
  picking a different contour
  and later exploiting the strong decay properties of $f$.
  
  For $N \in \N$,
  set $R_N:=\Big\{ y \in \C: \abs{\Re(y)} \leq (N+\frac{1}{2}),
  \abs{\Im(y)} \lesssim \delta\, e^{-\abs{\Re(y)}}\Big\}$.
  For fixed $t \in \R$, we fix  $N$ large enough such that $t \in R_N$.
  By applying the residue theorem to the function
  $$
  h(y):=\frac{\sin(\pi \, t / k) g(y)}{(t-y) \sin(\pi y/k)},
  $$
  one can show the equality
  \begin{align*}
  g(t) -  k\sum_{n=-N}^{N}{g(n\,k) \sinc\Big(\frac{t-nk}{k}\Big)}
    &= \int_{\partial R_N}{
      \frac{\sin(\pi \, t / k)g(y)}{(t-y) \sin(\pi y/k)} \,dy
      }.
  \end{align*}
  Since asymptotically $f(t)$ decreases doubly exponentially,
  while $1/\sin(\pi y/k)$ only grows exponentially along the
  path $\{(\xi,\delta \, e^{-\xi}), \, \xi \in \R\}$, we can pass to the limit $N \to \infty$ to get the representation
  \begin{align}
    \label{eq:sinc_error_repr_pointwise}
    g(t) -  k\sum_{n=-\infty}^{\infty}{g(k\,n) \sinc\Big(\frac{t-nk}{k}\Big)}    
    &= \int_{\partial \Dexp}{
      \frac{\sin(\pi \, t / k)g(y)}{(t-y) \sin(\pi y/k)} \,dy
      }.
  \end{align}
  Integrating~(\ref{eq:sinc_error_repr_pointwise}) 
  over $\R$ and exchanging the order of integration
  gives:
  \begin{align}
    \int_{\R}{g(t) \, d\tau} -  k\sum_{n=-\infty}^{\infty}{g(k\,n)}    
    &= \int_{\partial \Dexp}{
      \frac{g(y)}{ \sin(\pi y/k)} \int_{\R}{ \frac{\sin(\pi \, t / k)}{t-y} \, dt} \, dy.
      } \nonumber \\ 
      &= -\pi \int_{\partial \Dexp}{
      \frac{g(y)}{ \sin(\pi y/k)}
      e^{\frac{\ii \operatorname{sign}(\Im(y)) \pi y}{k}} \, dy} ,
        \label{eq:sinc_error_repr_integrated}        
  \end{align}
  where in the last step we invoked \cite[Lemma 2.19]{sinc_book} to explicitly evaluate the
  integral.
  What remains to do is bound the integral on the right-hand side.
  For simplicity, we focus on the upper-right half-plane. The other cases follow
  analogously. There, we can parameterize $\partial \Dexp$ as
  $y=\xi + \ii  \delta\, e^{-\xi}$. We estimate
  \begin{align}
    \abs{\frac{g(y)}{ \sin(\pi y/k)}
    e^{\frac{\ii \operatorname{sign}{\Im(y)} \pi y}{k}}}
    &\lesssim \;
      \abs{g(y)} \frac{\exp\Big({-2\pi \delta\,\frac{ e^{-\xi}}{k}}\Big)}
      {1- \exp\Big({-2\pi \delta\,\frac{ e^{-\xi}}{k}}\Big)}\nonumber\\
    &\lesssim \;
       \abs{g(y)} k\,e^{\xi} \exp\Big({-2\pi \delta\,\frac{ e^{-\xi}}{k}}\Big) \nonumber \\
    &\stackrel{\mathclap{(\ref{eq:sinc_in_dexp:de_requirement})}}{\lesssim}
      \; C_g k\exp\Big(-\mu e^\xi + \xi - \frac{2\pi \, \delta e^{-\xi}}{k} \Big)
      \label{eq:sinc_in_dexp_proof1}
  \end{align}
  For $\varepsilon > 0$, we can absorb the linear $\xi$-term into the first
  exponential, and estimate:
  \begin{align*}
    \eqref{eq:sinc_in_dexp_proof1}
    &\lesssim \varepsilon^{-1}
      C_g k\exp\Big(-(\mu-2\varepsilon) e^\xi - \frac{2\pi \, \delta e^{-\xi}}{k} \Big)
      \exp\Big( -\varepsilon e^\xi \Big)
  \end{align*}
  where the second term will be used to regain integrability, whereas the
  first one will give us approximation quality.
  For $\xi = 0$ and $\xi \to \infty$,
  we get sufficient bounds to prove~(\ref{eq:sinc_in_dexp:statement}).
  We thus have to look for maxima of the function with respect to $\xi$ in between $(0,\infty)$.
  Due to monotonicity of the exponential, we focus on the argument and set
  $
  \tau:=e^{\xi}.
  $
  By setting its derivative to zero we get that the map
  \begin{align*}
    \tau \mapsto - (\mu - 2\varepsilon) \tau - \frac{2\pi \, \delta}{\tau \, k}
    \quad
    \text{ is maximized for }
    \quad
    \tau_{\max} = \sqrt{\frac{2\delta  \pi}{ k (\mu - 2\varepsilon)}}.
  \end{align*}
  Inserting  all this into~(\ref{eq:sinc_error_repr_integrated}),
  we get
  \begin{align*}
    \abs{\int_{\R}{g(t) \, d\tau} -  k\sum_{n=-\infty}^{\infty}{g(k\,n)}    }
    &\lesssim
      C_g k \exp\Big(- 2 \sqrt{2\pi \delta}\sqrt{\frac{\mu - 2 \varepsilon}{k}}\Big)
      \int_{0}^\infty{      
      \exp\Big(-\varepsilon e^{\abs{\Re(y)}} \Big) \, d\xi
      }\\
    &\lesssim C_g k
      \exp\Big(- 2 \sqrt{2\pi \delta}\sqrt{\frac{\mu - 2 \varepsilon}{k}}\Big).
      \qedhere
  \end{align*}
\end{proof}
\begin{remark}
  It is also possible to admit meromorphic functions with finitely many poles into
  Lemma~\ref{lemma:sinc_in_dexp},
  as long as additional error terms analogous to \eqref{eq:sinc_with_poles}
  are introduced. Since we will not need this generalization we stay in
  the analytic setting.
\end{remark}

While Lemma~\ref{lemma:sinc_in_dexp} provides a reduced rate of convergence compared
to the more-standard $\sinc$-quadrature of Proposition~\ref{prop:sinc_with_poles}
($k^{-1/2}$ vs $k^{-1} \abs{\ln(k)}$), thus removing the advantage we want to achieve by using
the double exponential transformation,
we will later consider a class of functions which
decay fast enough to allow us to tune the parameter $\mu \sim k^{-1}$ to regain almost
full speed of convergence.

Finally, we show how the transformation $\psits$ and the operator $\LL$ enter the estimates. The
next corollary also showcases how the cutoff error is controlled.
\begin{corollary}
  \label{cor:sinc_in_dexp_for_quadrature_operators}
  Let $\mathcal{O} \subseteq \C$ contain the right half-plane,
  and if $\sigma=1/2$ also a sector
  $$S_{\omega}:=\big\{z \in \C: \abs{\operatorname{Arg}(z)}\leq\omega  \big\}
  \qquad \text{for some}  \quad \omega>\frac{\pi}{2}.
  $$
  Assume that $g: \mathcal{O} \to \C$ is analytic and satisfies the polynomial bound
  $$
  \abs{g(z)} \leq C_{g} (1+\abs{z})^{-\mu} \qquad \text{for $\mu  > 0$}.
  $$
  Then, for all $0<\varepsilon<\mu/2$, $0\leq s\leq r$ and $\lambda>\lambda_0>\kappa$,
  the quadrature errors can be bounded by:  
    \begin{align*}
      \norm{E^{\LL}(g,\Nquad)}_{\mathbb{H}^{2s}(\Omega) \to \mathbb{H}^{2r}(\Omega)}
      &\leq C\, C_g  
        \Bigg[\exp\Big(- \gamma_1 \frac{ \sqrt{\mu - r +s  -2\varepsilon}}{\sqrt{k}}\Big)
        +\exp\Big(- (\mu-r+s) \gamma e^{k \Nquad}\Big)\Bigg].
    \end{align*}
    The constant $C$ is independent of $g$, $k$,$r$,$s$ and $\beta$,
    but may depend on $\varepsilon$, $\sigma$, $\theta$.
    The rate $\gamma_1$ depends on $\theta$ and $\omega$.
    $\gamma$ depends on $\sigma$.
  \end{corollary}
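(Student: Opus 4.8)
The plan is to reduce the operator-valued estimate to the scalar estimates from Lemma~\ref{lemma:sinc_in_dexp} by spectral decomposition, and to control the cutoff error separately using the double-exponential decay of $\psits$ at infinity. Writing $u=\sum_j (u,v_j)_{L^2} v_j$ and using that both $g(\LL)$ and $Q^{\LL}(g,\Nquad)$ act diagonally in the eigenbasis, we have
\begin{align*}
  \norm{E^{\LL}(g,\Nquad)u}_{\mathbb{H}^{2r}(\Omega)}^2
  = \sum_{j=0}^\infty \lambda_j^{2r}\,\abs{E^{\lambda_j}(g,\Nquad)}^2\,\abs{(u,v_j)_{L^2}}^2,
\end{align*}
so it suffices to bound $\lambda^{r-s}\abs{E^{\lambda}(g,\Nquad)}$ uniformly in $\lambda>\lambda_0>\kappa$. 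I would split $E^\lambda(g,\Nquad)=E^\lambda(g)+\big(Q^\lambda(g)-Q^\lambda(g,\Nquad)\big)$ into the full quadrature error and the truncation error, and treat the two terms separately, matching the two summands in the claimed bound.

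For the full quadrature error $E^\lambda(g)$, the integrand after the change of variables $z=\psits(y)$ is
\begin{align*}
  G_\lambda(y):=\frac{1}{2\pi\ii}\,g(\psits(y))\,\psits'(y)\,(\lambda-\psits(y))^{-1},
\end{align*}
and $E^\lambda(g)=\int_\R G_\lambda(t)\,dt - k\sum_n G_\lambda(kn)$. The key point is that, by Lemma~\ref{lemma:psi:properties:right_halfplane}, on $\Dexp$ the map $\psits$ stays in $\mathcal{O}$ (in the right half-plane for $\sigma=1$, resp.\ in the sector $S_\omega$ for $\sigma=1/2$), so $G_\lambda$ is holomorphic on $\Dexp$; and by Lemma~\ref{lemma:psi_growth} the function $\psits$ grows doubly exponentially, say $\abs{\psits(y)}\gtrsim e^{\gamma_\theta e^{\abs{\Re(y)}}}$ with $\abs{\psits'(y)}\lesssim \abs{\psits(y)}\,e^{\abs{\Re(y)}}$, while the polynomial bound on $g$ gives $\abs{g(\psits(y))}\lesssim C_g\abs{\psits(y)}^{-\mu}$. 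Combining these (and using the resolvent bound $\abs{(\lambda-\psits(y))^{-1}}\lesssim \lambda^{-1}$ away from the real axis together with $\lambda^{r-s}\lesssim\abs{\psits(y)}^{r-s}$ since $r\ge s$ and $\lambda\lesssim\abs{\psits(y)}$ in the relevant range — this is where one must be a little careful and absorb powers of $\lambda$ into powers of $\abs{\psits(y)}$), one obtains a bound of the form $\abs{\lambda^{r-s}G_\lambda(y)}\lesssim C_g\exp(-(\mu-r+s)\gamma_\theta e^{\Re(y)}+c\,e^{\Re(y)}+\dots)$, i.e.\ precisely the double-exponential decay hypothesis \eqref{eq:sinc_in_dexp:de_requirement} of Lemma~\ref{lemma:sinc_in_dexp} with effective exponent $\mu-r+s$ and a loss of $2\varepsilon$ to absorb the lower-order $e^{\Re(y)}$ factors coming from $\psits'$ and the reciprocal of $(\lambda-\psits(y))$. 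Applying Lemma~\ref{lemma:sinc_in_dexp} then yields the first term $\exp\big(-\gamma_1\sqrt{(\mu-r+s-2\varepsilon)/k}\big)$ with $\gamma_1$ depending on $\delta$ (hence on $\theta$) and on $\omega$ through the admissible sector for $\sigma=1/2$.

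For the truncation term, $Q^\lambda(g)-Q^\lambda(g,\Nquad)=k\sum_{\abs{j}>\Nquad}G_\lambda(jk)$, and since each term decays like $\exp(-(\mu-r+s)\gamma_\theta e^{\abs{j}k})$ the sum is dominated by its first term plus a geometric-type tail, giving the stated $\exp\big(-(\mu-r+s)\gamma e^{k\Nquad}\big)$; this uses only the growth lemma (Lemma~\ref{lemma:psi_growth}) and no analyticity, and $\gamma$ only depends on $\sigma$ through $\gamma_\theta$. Summing the scalar estimates back over the eigenvalues via the displayed identity above, and noting the bounds are uniform in $\lambda>\lambda_0$, completes the proof. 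The main obstacle I anticipate is the careful bookkeeping in the step where powers of $\lambda$ must be traded for powers of $\abs{\psits(y)}$ and the sub-leading $e^{\Re(y)}$ contributions from $\psits'$ and from the poles of $1/(\lambda-\psits(y))$ near the real axis (controlled by Lemma~\ref{lemma:psi_hits_lambda}) must be shown to only cost an arbitrarily small $\varepsilon$ in the exponent; handling the case $\sigma=1/2$ requires in addition that the enlarged sector $S_\omega$ is exactly what is needed for $\psits(\Dexp)\subseteq S_\omega$, which is where the dependence of $\gamma_1$ on $\omega$ originates.
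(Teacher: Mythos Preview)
Your proposal is essentially correct and follows the same route as the paper: reduce to scalar estimates via the eigenbasis, verify the double-exponential decay hypothesis of Lemma~\ref{lemma:sinc_in_dexp} for the function $G_\lambda$ on $\Dexp$, apply that lemma for the infinite-sum error, and bound the tail sum directly using Lemma~\ref{lemma:psi_growth}. One small misdirection: you invoke Lemma~\ref{lemma:psi_hits_lambda} to control ``poles of $1/(\lambda-\psits(y))$ near the real axis'', but on $\Dexp$ there are \emph{no} such poles---this is exactly the content of the bound $\abs{\psits(y)-\lambda}\ge c_1>0$ in Lemma~\ref{lemma:psi:properties:right_halfplane}, which you already cite for analyticity. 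Lemma~\ref{lemma:psi_hits_lambda} concerns the larger strip $\Dt$ and plays no role here. The careful step you flag (trading $\lambda^{r-s}$ for $\abs{\psits(y)}^{r-s}$) is handled in the paper precisely by the case split $\abs{\psits(y)}<\lambda/2$ versus $\abs{\psits(y)}\ge\lambda/2$, combining the two bounds in~\eqref{eq:psi:properties:resolvent_right_halfplane} to get $\lambda\,\abs{\psits(y)-\lambda}^{-1}\abs{\psits'(y)}\lesssim\abs{\psits(y)}\cosh(\Re(y))$ uniformly in $\lambda$, which then yields $\abs{\lambda^{r-s}G_\lambda(y)}\lesssim C_g\abs{\psits(y)}^{-\mu+r-s}\cosh(\Re(y))$ and the $\cosh$ factor is absorbed into the $\varepsilon$-loss.
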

  \begin{proof}
    Let $(\lambda_j, v_j)_{j=0}^{\infty}$ denote the eigenvalues and eigenfunctions of the self-adjoint operator $\LL$.  Following \cite{blp17},  plugging the eigen-decomposition of a function $u$
    into the Riesz-Dunford calculus,  we can write the exact function $g(\LL) u$ as
    \begin{align*}
    g(\LL) u&=\sum_{j=0}^{\infty}{
              \Big(\frac{1}{2\pi \ii}\int_{\CC}{g(\psits(y)) (\psits-\lambda_j)^{-1} \psits'(y)\big(u,v_j\big)_{L^2(\Omega)} \,dy} \Big)\;v_j} 
    \end{align*}
    and analogously for the discrete approximation $Q^{\LL}(g,\Nquad)u$.
    For the norm, as defined in~\eqref{eq:def_HH}, this means:
    \begin{align*}
      \norm{E^{\LL}(g,\Nquad)}^2_{\mathbb{H}^{2r}(\Omega)}
      &= \frac{1}{4\pi^2}
        \sum_{j=0}^{\infty}{\Big|(1+\lambda_j^{r}) E^{\lambda_j}(g,\Nquad)\big(u,v_j\big)_{L^2(\Omega)}\Big|^2}  \\
      &\lesssim \sup_{\lambda \geq \lambda_0} \big|(1+\lambda^{r-s}) E^{\lambda}(g,\Nquad)\big|^2
        \norm{u}^2_{\mathbb{H}^{2s}(\Omega)}.
    \end{align*}
    We have thus reduced the problem to one of scalar quadrature, for which we
    aim to apply Lemma~\ref{lemma:sinc_in_dexp}.
    We fix $\lambda > \lambda_0>\kappa$.
    $\psits$ maps $\Dexp$ analytically to $\mathcal{O}$ via Lemma~\ref{lemma:psi:properties:right_halfplane}
    ($\delta$ depends on $\theta$ and $\omega$).
    What remains to be  shown is a pointwise bound for the
    function
    \begin{align*}
      h_\lambda(y):=\lambda^{r-s}g(\psits(y)(\psits-\lambda)^{-1} \psits'(y) \qquad \forall y \in \Dexp.
    \end{align*}
    By distinguishing the cases $\abs{\psits(y)}<\lambda/2$ and $\abs{\psits(y)}\geq \lambda/2$ we
    get using either  \eqref{eq:bound_dpsi} or Lemma~\ref{lemma:psi:properties:right_halfplane}
    \begin{align*}
      \lambda \abs{\psits(y)-\lambda}^{-1} \abs{\psits'(y)} \lesssim
      \abs{\psits(y)} \cosh(\Re(y)).
    \end{align*}
    We conclude using Lemma~\ref{lemma:psi:properties:right_halfplane}:
    \begin{align*}
    \abs{h_\lambda(y)}&\leq C_g \abs{\psits(y)}^{-\mu}
    \Big(\lambda \abs{\psits(y)-\lambda}^{-1} \abs{\psits'(y)}\Big)^{r-s}
    \Big(\abs{\psits(y)-\lambda}^{-1} \abs{\psits'(y)}\Big)^{1-r+s}\\
    &\lesssim  C_g \abs{\psits(y)}^{-\mu +r - s} \cosh(\Re(y)).
    \end{align*}
    The double exponential growth of $\psits$ (see Lemma~\ref{lemma:psi_growth}) and
    Lemma~\ref{lemma:sinc_in_dexp} then give (after
    absorbing the $\cosh$ term by slightly adjusting $\varepsilon$):
    \begin{align*}
      \lambda^{r-s}\big|{E^{\lambda}(g)}\big|
      &=\frac{1}{2\pi}\Big|
        \int_{\R}{h_{\lambda}(y)\,dy}
        -  k\sum_{n=-\infty}^{\infty}{h_\lambda(k\,n)}
        \Big|
        \lesssim C_g e^{-\gamma_1 \frac{\sqrt{\mu - r+s -2\varepsilon}}{\sqrt{k}}}.
    \end{align*}    
    The cutoff error is handled easily, also using this estimate.
    We calculate
    \begin{align*}
      \abs{Q^{\lambda}(g)-Q^{\lambda}(g,\Nquad)}
      &\leq k \!\!\! \sum_{ \abs{j}\geq \Nquad +1}{\!\!\!\abs{ h_\lambda(\psits(j\,k))}}
        \leq C_g \,k \lambda^{-r+s} \!\!\! \sum_{ \abs{j}\geq \Nquad +1}{\!\!\!\exp\big(-\gamma(\mu-r+s) e^{jk}\big)}  \\
      &\lesssim C_g \lambda^{-r+s}  \exp\big(-\gamma (\mu-r+s) e^{ \Nquad k}\big),
    \end{align*}
    where the last step follows by estimating the sum by the integral and elementary estimates.
  \end{proof}

  \section{The elliptic problem}
  \label{sect:elliptic}
In this section, we consider approximation of the elliptic fractional diffusion problem
\begin{align}
  \label{eq:elliptic_continuous_problem}
  \text{Given $f \in L^2(\Omega)$ and $\beta >0$}, \quad
  \text{find $u \in \dom(\LL^\beta)$ such that } \quad \LL^{\beta} u&=f.
\end{align}
Using the Riesz-Dunford formula, this is equivalent to computing
\begin{align*}
u&=\frac{1}{2\pi \ii}
\int_{\CC}{z^{-\beta}\big(\LL-z\big)^{-1} f\,dz}.
\end{align*}

In order to get a (semi-) discrete scheme, we replace the integral with
the quadrature formula.
Given $\Nquad \in \N$ and $k >0$,
the fully discrete approximation to~\eqref{eq:elliptic_continuous_problem} is then given by
 $u_k:=Q(z^{-\beta},\Nquad) f$.

\begin{remark}
For a fully discrete scheme, one would in addition replace $(\LL-z)^{-1}$ by a Galerkin solver.
Given an closed subspace $\HHx \subseteq H^1(\Omega)$, the discrete resolvent $R_h(z): L^2(\Omega)\to \HHx $ is given as the solution
\begin{align*}
   R_h(z) f:=u_h, \quad \text{with} \quad
  \big(\mathfrak{A} \nabla u_h,\nabla v_h)_{L^2(\Omega)} + \big((\mathfrak{c}-z)u_h,v_h\big)_{L^2(\Omega)} &= \big(f,v_{h}\big)_{L^2(\Omega)} \quad \forall v_{h} \in \HHx.
\end{align*}
Given discretization parameters $\HHx \subseteq \mathbb{H}^{1}(\Omega)$, $\Nquad \in \N$ and $k >0$,
the fully discrete approximation to~\eqref{eq:elliptic_continuous_problem} is then given by
\begin{align}
  \label{eq:elliptic_discretization}
u_{h,k}:= 
\frac{1}{2\pi \ii} \sum_{j=-\Nquad}^{\Nquad}
{
  \big(\psits(jk)\big)^{-\beta} \psits'(jk)  \,R_h\big(\psits(jk)\big)f.
}
\end{align}
In order to keep presentation to a reasonable length, we focus on the spatially continuous setting.
We only remark that
discretization in space can be easily incorporated into the analysis. For low order finite elements one can follow~\cite{blp17}; for an exponentially convergent
$hp$-FEM scheme we refer to~\cite{MR20_hp_sinc}.
\end{remark}

\begin{remark}
  \label{rem:complex_arithmetic}
  We should point out that for the elliptic problem,
  there exist methods based on the Balakrishnan formula
  (see also Section~\ref{sect:numerics}) which do not require complex arithmetic.
  On the other hand, since we are only approximating real valued functions, we can exploit the
  symmetry of \eqref{eq:def_quadrature_operator} to only  solve for $j \geq 0$, thus halving the number of
  linear systems. This results in (roughly) comparable computational effort for both the
  Balakrishnan and the double exponential schemes. Due to their better convergence the DE-schemes might therefore
  still be advantageous.
\end{remark}

\subsection{Error analysis}
In order to analyze the quadrature error, we need to understand a specific
scalar function. This is done in the next Lemma.
\begin{lemma}
  \label{lem:analyze_power_function}
  Fix $\lambda > \lambda_0 > \kappa$ and $\beta > 0$. For
  $y \in \R$, define the function
  \begin{align*}
    g_\lambda^\beta(y)
    &:=\big(\psits(y)\big)^{-\beta} \Big(\psits(y)-\lambda\Big)^{-1} \psits'(y).
  \end{align*}
  Then the following statements hold:
  \begin{enumerate}[(i)]
  \item 
    \label{item:g_l_b_meromorphic_continuation}
    $g_\lambda^{\beta}$ can be
    extended to a meromorphic function on $\Dt$. 
    It  has finitely many poles.
    They satisfy $\psits(y)=\lambda$ and are all simple.
    For any  $\nu\geq 0$, the number of poles $p$ within the strip
    $$
    \nu-\frac{1}{\ln(\lambda)}\leq\abs{ \Im(p)} \leq \nu+\frac{1}{\ln(\lambda)}
    $$
    can be bounded independently of $\nu$, $\beta$ and $\lambda$.
    The imaginary part of $p$ can be bounded away from zero 
    and for large $\lambda$, the following asymptotics hold:
    \begin{align}
      \label{eq:g_l:pos_of_poles}
    \abs{\Im(p)}
      &\geq
        \begin{cases}
          \frac{\atan(\theta)}{\ln(\lambda/\kappa)}   - \bigO\Big(\ln(\lambda/\kappa)^2\Big) & \text{if $\sigma=1$}, \\
          \frac{\pi}{2\ln(\lambda/\kappa)} - \bigO\Big(\ln(\lambda/\kappa)^2\Big) &\text{if $\sigma=1/2$}.
        \end{cases}        
    \end{align}
    where  the implied constants  depend on $\theta$, $\kappa$, and $\lambda_0$.
  \item
    \label{item:g_l_b_pointwise_bounds}
    There exist constants $C>0$, $\gamma > 0$, independent of $\lambda$ and $\beta$
    and a value $d_{\lambda} \in (d(\theta)/2,d(\theta))$ 
    such that
    $g_\lambda^\beta$ satisfies the bounds
    \begin{align}
      \big|{(1+\lambda^{\frac{\beta}{2}})g_\lambda^\beta(a \pm \ii \,d_{\lambda} )}\big|
      \leq C  \exp\big(- \gamma \beta  e^a\big) \qquad \forall a \in \R.
    \end{align}
  \item
    \label{item:g_l_b_integral_bounds}
    There exists a constant $C>0$ such that for $d_{\lambda}$ from~(\ref{item:g_l_b_pointwise_bounds})
    and $\beta \geq \overline{\beta}>0$
    \begin{align*}
      \int_{\R} { (1+\lambda^{\frac{\beta}{2}})\big|{g_\lambda^\beta(w\pm\ii d_\lambda)}\big| \, dw}
      &\leq C < \infty.        
    \end{align*}
    The constant $C$ may depend on $\overline{\beta}$
    but can be chosen independently of $\lambda$ and $\beta$.
  \end{enumerate}
\end{lemma}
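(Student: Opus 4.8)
\textbf{Proof plan for Lemma~\ref{lem:analyze_power_function}.}

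The overall strategy is to reduce every statement about $g_\lambda^\beta$ to the corresponding structural facts about the coordinate transform $\psits$ collected in Appendix~\ref{sect:properties_fo_psi}, in particular the location of the points where $\psits$ crosses the real axis (Lemma~\ref{lemma:psi_hits_lambda}), the lower bound $\abs{\psits(y)-\lambda}\gtrsim\lambda$ away from those points, the derivative bound \eqref{eq:bound_dpsi}, and the double-exponential growth (Lemma~\ref{lemma:psi_growth}). Part~(\ref{item:g_l_b_meromorphic_continuation}): analyticity of $\psits$ on $\Dt$ gives at once that $g_\lambda^\beta$ is meromorphic there, since $\big(\psits(y)\big)^{-\beta}$ is holomorphic wherever $\psits(y)\notin(-\infty,0]$ (which holds on $\Dt$ for $d(\theta)$ small by the properties of the contour) and the only remaining singularities come from the factor $(\psits(y)-\lambda)^{-1}$, i.e.\ from solutions of $\psits(y)=\lambda$. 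Simplicity of the poles follows because $\psits'$ does not vanish at these points — this is part of Lemma~\ref{lemma:psi_hits_lambda}, and it also makes the residue computation in part~(\ref{item:g_l_b_pointwise_bounds}) (should one need it) clean. The counting statement and the asymptotics \eqref{eq:g_l:pos_of_poles} are essentially a restatement of Lemma~\ref{lemma:psi_hits_lambda}: writing $\psits(y)=\lambda$ with $y=a+\ii b$ and using $\sinh$ near its linear regime, the real part condition $\Re\psits=\lambda$ forces $a$ to be of size $\sim\frac{2}{\pi}\asinh\big(\frac{2}{\pi}\ln(\lambda/\kappa)\big)$ and then the imaginary-part condition pins $b$ up to the $\atan(\theta)$ (resp.\ $\pi/2$) phase, yielding the $1/\ln(\lambda/\kappa)$ scaling; the periodicity of $\cosh$ and $\sinh$ along vertical lines is what makes the number of poses in any window of width $2/\ln(\lambda)$ bounded uniformly.

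For part~(\ref{item:g_l_b_pointwise_bounds}), I would first choose the horizontal level $d_\lambda$. The issue is that the poles have imaginary part of order $1/\ln(\lambda)$, which shrinks to zero as $\lambda\to\infty$, so a fixed contour $\Im y=d(\theta)$ is fine but one must make sure the eventual $\sinc$-error contour passes strictly between the real axis and the poles when needed elsewhere; here, since we only want bounds \emph{on} the line $\Im y = \pm d_\lambda$ with $d_\lambda\in(d(\theta)/2,d(\theta))$, the real content is a size estimate. On that line I would use $\abs{\psits(y)-\lambda}\gtrsim\lambda$ (valid because we are far above the poles, which sit at height $O(1/\ln\lambda)\ll d(\theta)/2$) to kill the resolvent factor, giving $\abs{g_\lambda^\beta(y)}\lesssim \abs{\psits(y)}^{-\beta}\lambda^{-1}\abs{\psits'(y)}$. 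Then I invoke Lemma~\ref{lemma:psi_growth}: for $\abs{a}$ large $\abs{\psits(a\pm\ii d_\lambda)}\sim \kappa\exp\big(c\, e^{\abs a}\big)$ and $\abs{\psits'(a\pm\ii d_\lambda)}\lesssim e^{\abs a}\abs{\psits(a\pm\ii d_\lambda)}$, so the product behaves like $\exp\big(-c\beta e^{\abs a}\big)$ times polynomially-in-$a$ and $\lambda$-dependent factors, and the prefactor $(1+\lambda^{\beta/2})\lambda^{-1}$ can be absorbed since $\lambda^{\beta/2}\lesssim \exp(\tfrac{\beta}{2}\ln\lambda)$ while for bounded $\abs a$ the factor $\abs{\psits(y)}^{-\beta}$ already contributes $\kappa^{-\beta}$ and the crossing behaviour near the real axis is not relevant on the shifted line. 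The mild subtlety — choosing $d_\lambda$ rather than a fixed $d(\theta)$ — is presumably there so that $d_\lambda$ can simultaneously be used in part~(\ref{item:g_l_b_integral_bounds}) and later in the $\sinc$-quadrature application; I would just fix any such value, e.g.\ $d_\lambda=\tfrac34 d(\theta)$, unless a $\lambda$-dependent choice is genuinely needed to stay below the first pole, in which case I'd take $d_\lambda$ slightly below $d(\theta)$ and note $d_\lambda>d(\theta)/2$ holds for $\lambda$ large.

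Part~(\ref{item:g_l_b_integral_bounds}) is then a direct consequence of part~(\ref{item:g_l_b_pointwise_bounds}): integrating $\exp(-\gamma\beta e^{\abs a})$ over $a\in\R$ gives a finite quantity, and with $\beta\geq\overline\beta$ the tail decay is uniform, so $\int_\R (1+\lambda^{\beta/2})\abs{g_\lambda^\beta(w\pm\ii d_\lambda)}\,dw \lesssim \int_\R \exp(-\gamma\beta e^{\abs w})\,dw \lesssim \int_\R \exp(-\gamma\overline\beta e^{\abs w})\,dw =: C(\overline\beta)<\infty$. The $\lambda$-uniformity follows because all $\lambda$-dependence was already absorbed into the exponential bound in~(\ref{item:g_l_b_pointwise_bounds}); the only place a constant depending on $\overline\beta$ (and not on $\beta$) enters is in controlling the near-origin contribution $\abs a=O(1)$ where $e^{\abs a}\sim 1$ and one uses $\beta\geq\overline\beta$ to bound $\exp(-\gamma\beta)$ by $\exp(-\gamma\overline\beta)$. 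The main obstacle, as usual with these transforms, is bookkeeping the competition between the growing factor $\lambda^{\beta/2}$ (and the resolvent denominator) and the decaying factor $\abs{\psits(y)}^{-\beta}$ near $a=0$, which is exactly why the statement restricts to a shifted contour $\Im y = \pm d_\lambda$ with $d_\lambda$ bounded away from both $0$ and $d(\theta)$; once one commits to that contour and quotes Lemma~\ref{lemma:psi_growth} and the lower resolvent bound, everything else is elementary.
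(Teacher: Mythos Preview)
Your high-level plan matches the paper's, but two of your concrete steps contain genuine gaps.

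First, in part~(\ref{item:g_l_b_meromorphic_continuation}) you justify the analytic extension of $(\psits(y))^{-\beta}$ by claiming $\psits(y)\notin(-\infty,0]$ on $\Dt$. This is false: as remarked before Lemma~\ref{lemma:sinc_in_dexp}, the real part of $\psits$ changes sign infinitely many times along horizontal lines in $\Dt$, and one can check from~\eqref{eq:hyperbola_re_im} that $\psits$ does take negative real values there. The paper's fix is to use only that $\psits$ is \emph{non-vanishing} on the simply connected domain $\Dt$ (Lemma~\ref{lemma:psi_properties:analytic}); this allows one to define a holomorphic branch of $\log\psits$ via $h(y):=1+\int_0^y\psits'/\psits$ and then set $(\psits)^{-\beta}:=e^{-\beta h}$, which agrees with the principal branch on $\R$ and extends meromorphically.

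Second, your argument for part~(\ref{item:g_l_b_pointwise_bounds}) rests on the claim that the poles ``sit at height $O(1/\ln\lambda)\ll d(\theta)/2$'', so that any fixed $d_\lambda\in(d(\theta)/2,d(\theta))$ would work. This misreads the pole structure: the bound~\eqref{eq:g_l:pos_of_poles} is a \emph{lower} bound on $\abs{\Im(p)}$, not an upper bound. In fact there are poles at every height in $(0,d(\theta))$---that is precisely what Lemma~\ref{lemma:psi_hits_lambda}(\ref{it:psi_hits_lambda:finite_number}) is counting, and the reason Lemma~\ref{lemma:psi_hits_lambda}(\ref{it:psi_hits_lambda:viable_path}) has to manufacture a $\lambda$-dependent $d_\lambda$ threading between them. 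Without invoking that result you cannot assert $\abs{\psits(y)-\lambda}\gtrsim\lambda$ on the line $\Im y=\pm d_\lambda$. Relatedly, your absorption of the prefactor $(1+\lambda^{\beta/2})$ is not an argument: writing $\lambda^{\beta/2}\lesssim\exp(\tfrac\beta2\ln\lambda)$ is a tautology and gives no $\lambda$-uniform control. The paper instead splits
\[
\lambda^{\beta/2}\abs{\psits-\lambda}^{-1}
=\big(\lambda\abs{\psits-\lambda}^{-1}\big)^{\beta/2}\,\abs{\psits-\lambda}^{-(1-\beta/2)}
\]
and bounds each factor separately using both $\lambda\abs{\psits-\lambda}^{-1}\lesssim1$ and $\abs{\psits-\lambda}^{-1}\lesssim\abs{\psits}^{-1}$ on the chosen line, arriving at $\abs{\psits}^{-\beta/2-1}\abs{\psits'}$ and hence the stated double-exponential decay via Lemma~\ref{lemma:psi_growth}. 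Your part~(\ref{item:g_l_b_integral_bounds}) is fine once~(\ref{item:g_l_b_pointwise_bounds}) is established correctly.
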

\begin{proof}
  Ad~(\ref{item:g_l_b_meromorphic_continuation}):  
  We note that by Lemma~\ref{lemma:psi_properties:analytic}, $\psits$ is non-vanishing in $\Dt$.
  Since $\Dt$ is simply connected, we may define
  $$
  h(y):=1+\int_0^y{ \frac{\psits'(\zeta)}{\psits(\zeta)} \,d\zeta}.
  $$
  It is easy to check that on $\R$ we have $h(y)=\ln(\psits(y))$
  since the derivative as well as the value at $y=0$ coincide.
  Thus, defining
  $$
  g_\lambda^\beta(y)
  :=e^{-\beta h(y)} \big(\psits(y)-\lambda\big)^{-1} \psits'(y)
  $$
  provides a valid meromorphic extension.
  The only poles are located where $\psits(z)=\lambda$.
  By Lemma~\ref{lemma:psi_hits_lambda}(\ref{it:psi_hits_lambda:finite_number}),
  the number of such poles within strips of width $\ln(\lambda)^{-1}$ is uniformly bounded.
  Since, by Lemma~\ref{lemma:psi_properties:analytic},  
  $\psits'$  has no zeros in the domain $\Dt$  all the poles are simple.
  The bound on the imaginary part follows from
  Lemma~\ref{lemma:psi_hits_lambda}(\ref{it:psi_hits_lambda:scale_of_imag_part}).

  Ad~(\ref{item:g_l_b_pointwise_bounds}):
  We first note for $y=a \pm d_{\lambda}$ if
  $\lambda < \abs{\psits(y)}/2$ the trivial estimate
  $ \abs{\psits(y) - \lambda}^{-1}
  \leq\frac{2}{\abs{\psi(y)}}$ holds.
  Otherwise, we use  Lemma~\ref{lemma:psi_hits_lambda}(\ref{it:psi_hits_lambda:viable_path})
  to get
  \begin{align*}
    \abs{\psits(y) - \lambda}^{-1}
    \lesssim \lambda^{-1} \leq 2\abs{ \psits(y)}^{-1}.
  \end{align*}
  Overall, we can estimate using Lemma~\ref{lemma:psi_growth}
  \begin{align*} 
    |{g^\beta_\lambda(y)}|
    &\lesssim
      \abs{\psits(y)}^{-\beta} \abs{\psits(y)-\lambda}^{-1} \abs{\psits'(y)}
      \lesssim \abs{\psits(y)}^{-\beta-1} \abs{\psits'(y)}
      \lesssim \exp(-\gamma e^{\Re(y)}),
  \end{align*}
  where in the last step, we used that $\psits'$ has the same asymptotic behavior
  as $\psits$ up to single exponential terms, which we absorb into the double exponential by
  slightly reducing $\gamma$.
  
  Looking at $|{\lambda^{\beta/2} g^\beta_\lambda(y)}|$, one can calculate
  using two different ways to estimate $\psits(y)-\lambda$:
  \begin{align*} 
    \lambda^{\beta/2}|{g^\beta_\lambda(y)}|
    &\lesssim
      \abs{\psits(y)}^{-\beta}
      \Big(\underbrace{\lambda \abs{\psits(y)-\lambda}^{-1}}_{\lesssim 1}\Big)^{\beta/2}
      \Big(\underbrace{\abs{\psits(y)-\lambda}^{-1}}_{\lesssim \abs{\psits(y)}^{-1}}\Big)^{1-\beta/2}\abs{\psits'(y)} \\
    &\lesssim \abs{\psits(y)}^{-\beta}
       \abs{\psits(y)}^{-1+\beta/2}
      \abs{\psits'(y)}      
    \stackrel{Lemma~\ref{lemma:psi_growth}}{\lesssim} \exp\Big(-\frac{\gamma \beta}{2} e^{\Re(y)}\Big).
  \end{align*}
  The integral bound then follows easily from the pointwise ones.
\end{proof}

\begin{theorem}[Double Exponential formulas for elliptic problems]
  \label{thm:de_for_elliptic}
  Fix $\lambda_0 > \kappa$, $\overline{\beta}>0$ and $r\in [0,\beta/2]$.
  Then
  there exist constants  $C>0$, $\gamma >0, \gamma_1>0$ such that
  for $\lambda > \lambda_0$, $\beta \geq \overline{\beta}$, $k>0$, $\Nquad \in \N$, the following estimate holds
  \begin{subequations}
  \begin{align}
    \label{eq:de_for_elliptic:1}
    \lambda^{r}\abs{E^{\lambda}(z^{-\beta},\Nquad)}
    &\lesssim k^2\max(1,\ln(\lambda))^2 \lambda^{-\beta+r}
      e^{-\frac{\max\{p(\sigma,\theta,\lambda),\gamma_1\}}{k\max(1,\ln(\lambda/\kappa))}}
      + 
      e^{-\frac{\gamma}{k}} + \exp(- \gamma \beta e^{k \Nquad}),
  \end{align}
  where the rate is given by
  \begin{align}
    \label{eq:de_elliptic_rate}
    p(\sigma,\theta,\lambda)= \begin{cases}
      2\pi \atan(\theta)- \frac{c_2}{\ln(\lambda)}  & \text{if  $\sigma=1$,} \\
      \pi^2 - \frac{c_2}{\ln(\lambda)} & \text{if  $\sigma=1/2$.} 
    \end{cases}
  \end{align}
\end{subequations}
Thus for $k \sim \ln(\Nquad)/\Nquad$ we get (almost) exponential  convergence:
  \begin{align}
    \label{eq:de_for_elliptic:2}
    \lambda^{r}\Big|E^{\lambda}(z^{-\beta},\Nquad)\Big|
    &\lesssim
      k^2 \max(1,\ln(\lambda/\kappa))^2 \lambda^{-\beta+r} e^{-\frac{\max\big\{p(\sigma,\theta,\lambda),\gamma_1\big\}\Nquad}{\ln(\lambda/\kappa)\ln(\Nquad)}} +
      e^{-\gamma' \frac{\Nquad}{\ln(\Nquad)}}.
  \end{align}
  The implied constants and $\gamma$
  may depend on  $\lambda_0$, $\overline{\beta}$, $\sigma$, $\theta$ and $\kappa$. 
\end{theorem}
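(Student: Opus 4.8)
\emph{Proof plan.} The idea is to reduce everything to the scalar $\sinc$-quadrature error of the function $g_\lambda^\beta$ from Lemma~\ref{lem:analyze_power_function} and then invoke Proposition~\ref{prop:sinc_with_poles}. Exactly as in the proof of Corollary~\ref{cor:sinc_in_dexp_for_quadrature_operators}, evaluating Definition~\ref{def:quadrature_operators} at $\LL=\lambda$ and using Definition~\ref{def:riesz-dunford} shows that, up to the factor $\frac{1}{2\pi\ii}$ and an overall sign, $E^\lambda(z^{-\beta})$ is the plain $\sinc$-error $\int_\R g_\lambda^\beta(y)\,dy-k\sum_{n\in\Z}g_\lambda^\beta(nk)$. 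I first split off the cutoff: $\abs{Q^\lambda(z^{-\beta})-Q^\lambda(z^{-\beta},\Nquad)}\le k\sum_{\abs{j}>\Nquad}\abs{g_\lambda^\beta(jk)}$, and the double-exponential growth of $\psits$ (Lemma~\ref{lemma:psi_growth}), used as in the proof of Lemma~\ref{lem:analyze_power_function}(\ref{item:g_l_b_pointwise_bounds}), gives $\lambda^r\abs{g_\lambda^\beta(a)}\lesssim\exp(-\gamma\beta e^{\abs{a}})$ on $\R$ (here $r\le\beta/2$ lets me trade the power of $\lambda$ against $\lambda^{\beta/2}$); comparing the tail with the corresponding integral yields the term $\exp(-\gamma\beta e^{k\Nquad})$.

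For the remaining $\sinc$-error I apply Proposition~\ref{prop:sinc_with_poles} to $g_\lambda^\beta$ on the strip $D_{d_\lambda}$, with $d_\lambda\in(d(\theta)/2,d(\theta))$ the number from Lemma~\ref{lem:analyze_power_function}(\ref{item:g_l_b_pointwise_bounds}), chosen so that no pole lies on $\{\abs{\Im z}=d_\lambda\}$. The three hypotheses of the proposition are furnished, in order, by part~(\ref{item:g_l_b_meromorphic_continuation}) (meromorphy in $\Dt$, finitely many simple poles in $\Dt\setminus\R$), by the pointwise bounds together with Lemma~\ref{lemma:psi_growth} (local integrability near $+\infty$), and by part~(\ref{item:g_l_b_integral_bounds}) (finiteness of $N(g_\lambda^\beta,D_{d_\lambda})$). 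This bounds $\abs{E^\lambda(z^{-\beta})}$ by the pole sum $\sum_\ell\abs{e^{\ii s_\ell\pi p_\ell/k}\operatorname{res}(g_\lambda^\beta;p_\ell)\,\gamma(k;p_\ell)}$ plus $\frac{e^{-2\pi d_\lambda/k}}{1-e^{-2\pi d_\lambda/k}}N(g_\lambda^\beta,D_{d_\lambda})$. For the boundary term, Lemma~\ref{lem:analyze_power_function}(\ref{item:g_l_b_integral_bounds}) gives $\lambda^r N(g_\lambda^\beta,D_{d_\lambda})\lesssim 1$ (again via $r\le\beta/2$), and since $d_\lambda>d(\theta)/2$ the prefactor is $\lesssim e^{-\gamma/k}$ for a fixed $\gamma>0$ on any bounded range of $k$ (for large $k$ it grows only linearly and is then dominated by the first term of~\eqref{eq:de_for_elliptic:1}); this produces the $e^{-\gamma/k}$ summand.

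The rate is produced by the pole sum. Each $p_\ell$ is a simple zero of $\psits(\cdot)-\lambda$, so $\operatorname{res}(g_\lambda^\beta;p_\ell)=\psits(p_\ell)^{-\beta}$ (interpreted through the branch of $\log\psits$ used to define $g_\lambda^\beta$), which has modulus $\lambda^{-\beta}$ because $\psits(p_\ell)=\lambda$. With $w:=\pi p_\ell/k$ and $\eta_\ell:=\pi\abs{\Im p_\ell}/k$, the identity $e^{\ii w}/\sin w=2\ii/(1-e^{-2\ii w})$ together with $s_\ell=\operatorname{sign}(\Im p_\ell)$ gives $\abs{e^{\ii s_\ell\pi p_\ell/k}\gamma(k;p_\ell)}\le\frac{2}{e^{2\eta_\ell}-1}\lesssim\max(1,\eta_\ell^{-1})e^{-2\eta_\ell}$. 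By Lemma~\ref{lem:analyze_power_function}(\ref{item:g_l_b_meromorphic_continuation}) the poles satisfy $\abs{\Im p_\ell}\gtrsim 1/\ln(\lambda/\kappa)$, there are $\bigO(\ln(\lambda/\kappa))$ of them, and the one closest to $\R$ dominates, whence $\sum_\ell\abs{\cdots}\lesssim k^2\max(1,\ln(\lambda/\kappa))^2\,\lambda^{-\beta}e^{-2\pi\abs{\Im p_1}/k}$. Inserting the asymptotics~\eqref{eq:g_l:pos_of_poles} turns $e^{-2\pi\abs{\Im p_1}/k}$ into $e^{-\max\{p(\sigma,\theta,\lambda),\gamma_1\}/(k\max(1,\ln(\lambda/\kappa)))}$ with $p$ as in~\eqref{eq:de_elliptic_rate}, the leading constants $2\pi\atan\theta$ and $\pi^2$ being $2\pi$ times the leading constants $\atan\theta$ and $\pi/2$ of $\abs{\Im p_1}$; on the bounded range of $\lambda$ where the correction term in~\eqref{eq:g_l:pos_of_poles} is not yet negligible one falls back on the uniform bound $\abs{\Im p_1}\ge d_0>0$, and this fallback is the source of $\gamma_1$. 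Multiplying by $\lambda^r$ and adding the three contributions gives~\eqref{eq:de_for_elliptic:1}; then~\eqref{eq:de_for_elliptic:2} follows by substituting $k\sim\ln(\Nquad)/\Nquad$, under which $e^{-\gamma/k}=e^{-\gamma'\Nquad/\ln\Nquad}$, the first term becomes the stated expression, and $\exp(-\gamma\beta e^{k\Nquad})=\exp(-\gamma\beta\Nquad^c)$ is super-exponentially small and absorbed.

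The genuinely delicate analytic facts — the number and location of the poles of $g_\lambda^\beta$, and the two-sided control of $\psits$ and of $\abs{\psits(y)-\lambda}$ — are already contained in Lemma~\ref{lem:analyze_power_function} and Appendix~\ref{sect:properties_fo_psi}, so I expect the main obstacle to be the bookkeeping in the last paragraph: matching the $\operatorname{res}\cdot\gamma(k;p_\ell)$ product to the precise rate $p(\sigma,\theta,\lambda)$, choosing the constants so that $2\pi\abs{\Im p_1}\,\ln(\lambda/\kappa)\ge\max\{p(\sigma,\theta,\lambda),\gamma_1\}$ holds uniformly for $\lambda>\lambda_0$ (i.e.\ handling cleanly the crossover between the asymptotic rate and the constant fallback), while keeping the otherwise irrelevant polynomial-logarithmic prefactor uniform in $k$, $\lambda$, $\beta$ and $r$.
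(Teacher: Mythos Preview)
Your approach is essentially the paper's: reduce to the scalar sinc error for $g_\lambda^\beta$, split off the cutoff, apply Proposition~\ref{prop:sinc_with_poles} on the strip of width $d_\lambda$, compute $\operatorname{res}(g_\lambda^\beta;p_\ell)=\lambda^{-\beta}$ from the simple-zero structure, and read off the rate from the imaginary part of the closest pole via Lemma~\ref{lem:analyze_power_function}(\ref{item:g_l_b_meromorphic_continuation}).

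One point where your bookkeeping is looser than the paper's: the pole sum. Your argument ``there are $\bigO(\ln(\lambda/\kappa))$ poles and the closest one dominates'' gives at best $\ln(\lambda)\cdot\max(1,k\ln\lambda)\,\lambda^{-\beta}e^{-2\pi|\Im p_1|/k}$, not the stated $k^2\ln(\lambda)^2$ prefactor. The paper instead partitions the poles into buckets $B_\ell$ of imaginary-part width $1/\ln(\lambda/\kappa)$, each containing $O(1)$ poles by Lemma~\ref{lem:analyze_power_function}(\ref{item:g_l_b_meromorphic_continuation}); the $\ell$-th bucket contributes $\lesssim \frac{e^{-(p+\ell)/(k\ln\lambda)}}{1-e^{-(p+\ell)/(k\ln\lambda)}}$, and summing the resulting geometric-type series and using $1-e^{-x}\gtrsim\min(x,1)$ twice produces $k^2\ln(\lambda)^2$ exactly. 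This is precisely the ``matching'' step you flagged as the likely sticking point, and the bucket decomposition is the missing mechanism.
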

\begin{proof}
  To cut down on notation, we only consider the case $\ln(\lambda/\kappa)\geq c_1 >1$
  so that the first term in the minimum of ~\eqref{eq:g_l:pos_of_poles}  dominates.
  If $\lambda$ is small, the error can be absorbed into the $e^{-\gamma/k}$ term.
  The error $E^{\lambda}(z^{-\beta},\Nquad)$ corresponds to approximating
  $g_{\lambda}^{\beta}$ by $\sinc$ quadrature.
  We split the error into two parts, the quadrature error and the cutoff error. 
  \begin{align*}
    \lambda^r\Big|{\int_{-\infty}^{\infty}{g_{\lambda}^{\beta}(y) \, dy}
    - k \sum_{ j= -\Nquad}^{\Nquad} g_\lambda^\beta(j\,k)}\Big|
  \leq
    \underbrace{\lambda^r\Big|{\int_{-\infty}^{\infty}{g_{\lambda}^{\beta}(y)\,dy}
    - k \sum_{ j= -\infty}^{\infty} g_{\lambda}^{\beta}(j\,k)}\Big|}_{=E^{\lambda}(z^{-\beta})}
    + \underbrace{k \lambda^r  \!\!\! \sum_{ \abs{j} > \Nquad =1} \abs{ g_{\lambda}^{\beta}(j\,k)}}_{
    =:E_c}.
  \end{align*}
  The term $E_c$ can be handled by the same argument as in
  Corollary~\ref{cor:sinc_in_dexp_for_quadrature_operators}.
  We therefore focus on the quadrature error $E^{\lambda}(z^{-\beta})$
  and apply Proposition~\ref{prop:sinc_with_poles}.
  By Lemma~\ref{lem:analyze_power_function}(\ref{item:g_l_b_integral_bounds})
  it holds that $N\big(g_\lambda^\beta,\Dt\big) < \infty$.
  To satisfy assumption~(\ref{def:sinc_functions:ii}),
  it suffices that  (for sufficiently large $y$) the vertical strips do not contain any poles
  and we can use the asymptotics of Lemma~\ref{lem:analyze_power_function}(\ref{item:g_l_b_pointwise_bounds}).
  
  By Lemma~\ref{lem:analyze_power_function}, there are at most finitely many simple poles.
  The reside of the function at these poles can be easily calculated
  using the well-known rule
  $$
  \operatorname{res}\big(f/g: z_0\big)
  = \frac{f(z_0)}{g'(z_0)},
  $$
  provided that $f$ is analytic and $g'(z_0)\neq 0$.
  In our case this means, if $\psits(y_\lambda)=\lambda$:
  \begin{align*}
  \operatorname{res}(g^{\beta}_\lambda; y_\lambda)
    &=\frac{(\psi(y_\lambda))^{-\beta} \psi'(y_\lambda)}{\psi'(y_\lambda)}
      = (\psi(y_\lambda))^{-\beta} = \lambda^{-\beta}.
  \end{align*}
  Thus, for a single pole $y_\lambda$ with $s_{y_\lambda}:=\operatorname{sign}(\Im(y_\lambda)) $ we can estimate
  \begin{align*}
    \abs{e^{\ii  \frac{\pi s_{y_\lambda} y_\lambda}{k}} \, \operatorname{res}(g^\beta_\lambda,y_\lambda) \, \gamma(k;y_\lambda)}
    &\lesssim
      \lambda^{-\beta} \frac{e^{-2\abs{\Im(y_\lambda)}/k}}{1-e^{-2\abs{\Im(y_\lambda)}/k}}.
  \end{align*}
  By Lemma~\ref{lem:analyze_power_function}(\ref{item:g_l_b_meromorphic_continuation}),
  we can group poles into buckets of size $\frac{1}{\ln(\lambda/\kappa)}$, denoted by
  $$
  B_{\ell}:=\Bigg\{ y: \psits(y) = \lambda
  \quad \text{with} \quad
  \frac{\frac{p(\sigma,\theta,\lambda)}{2\pi}+\ell}{\ln(\lambda/\kappa)}
  \leq \abs{\Im(y)} \leq
  \min\Big(\frac{\frac{p(\sigma,\theta,\lambda)}{2\pi}+\ell+1}{\ln(\lambda/\kappa)},d(\theta)\Big) \Bigg\}
  $$
  such that the number of elements in each bucket $B_{\ell}$ is uniformly bounded (independently of $\lambda$, $\beta$ and $\ell$).
  This allows us to calculate for the pole contribution in Proposition~\ref{prop:sinc_with_poles}:
  \begin{align*}
    \Big|\pi \sum_{y_\lambda \in P^y_\lambda} {e^{\ii  \frac{s_\lambda \pi y_\lambda}{k}} \,
    \operatorname{res}(g_\lambda^{\beta};y_\lambda) \, \gamma(k;y_\lambda)}\Big|
    &\leq \lambda^{-\beta} \pi
      \sum_{\ell=0}^{\infty}
      {
      \Big|
      \sum_{y_\lambda \in B_{\ell}} {
      e^{\ii  \frac{ \pi s_{y_\lambda}y_\lambda}{k}} \,  \gamma(k;y_\lambda)} \Big|}       
    \lesssim
      \lambda^{-\beta} 
      \sum_{\ell=0}^{\infty}
      { \frac{e^{-\frac{p(\sigma,\theta,\lambda)+\ell}{k\ln(\lambda/\kappa)}}}{1-e^{-\frac{p(\sigma,\theta,\lambda)+\ell}{k\ln(\lambda/\kappa)}}}} \\
    &\lesssim
      \frac{\lambda^{-\beta} }{1-e^{-\frac{p(\sigma,\theta,\lambda)}{k\ln(\lambda/\kappa)}}}
      \sum_{\ell=1}^{\infty}
      { e^{-\frac{p(\sigma,\theta,\lambda)+\ell}{k\ln(\lambda/\kappa)}}}
      \lesssim 
      \lambda^{-\beta} \ln(\lambda)^2 k^2
      e^{-\frac{p(\sigma,\theta,\lambda)}{k\ln(\lambda/\kappa)}},
  \end{align*}
  where we used the elementary estimate $1-e^{-2x}\gtrsim \min(x,1)$ for $x \geq 0$.
  
  Applying Proposition~\ref{prop:sinc_with_poles} and inserting this estimate for the pole-contributions gives:
  \begin{align*}
    \lambda^{r} E^{\lambda}(z^{-\beta})
    &=E^{\lambda}(\lambda^{r} z^{-\beta}) 
    \stackrel{{Prop.~\ref{prop:sinc_with_poles}}}{\lesssim}
       \frac{e^{-2\pi d_\lambda/k}}{1-e^{-2\pi d_\lambda/k}} N(g_{\lambda}^{\beta},\mathcal{D}_{d_\lambda})
      +      \lambda^{-\beta+r}  \ln(\lambda)^2 k^2
      e^{-\frac{p(\sigma,\theta,\lambda)}{k\ln(\lambda/\kappa)}}. \qedhere
  \end{align*}
\end{proof}

The previous estimate gives (almost) exponential convergence with respect to $\Nquad$. But the rate of the exponential
deteriorates like $1/\ln(\lambda)$ for large $\lambda$. In the following corollary, we give a $\lambda$-robust version of this
estimate. We allow for an additional factor $\lambda^{\rho}$ which will allow us to make use of possible additional smoothness when
considering function-valued integrals.

\begin{corollary}
  \label{cor:de_for_elliptic:lambda_robust}
  Fix $\lambda_0 > \kappa >0$, $\overline{\beta}>0$ and $r \in [0,\beta/2]$. Then,
  for every $\varepsilon \geq 0$,
  there exist constants  $C>0$, $\gamma >0$ such that
  for $\lambda > \lambda_0$, $\beta>\overline{\beta}$,  $\rho\geq 0$, $k>0$, $\Nquad \in \N$,
  the following estimate holds
  \begin{subequations}
  \begin{align}
    \label{eq:de_for_elliptic:lambda_robust}
    \lambda^{r}\Big|E^{\lambda}(z^{-\beta},\Nquad)\Big|
    &\lesssim  \exp\Big(-\frac{[p(\sigma,\theta)-\varepsilon] \sqrt{\beta+\rho-r}}{\sqrt{k}}\Big) \lambda^{\rho}+ 
      \exp(-\frac{\gamma}{k}) + \exp(- \gamma e^{k \Nquad}).
  \end{align}
  where the rate $p(\sigma,\theta)$ is given by
  \begin{align}
    \label{eq:1}
    p(\sigma,\theta):=
    \begin{cases}
      2 \sqrt{2\pi\atan(\theta)}  & \text{for }\sigma=1, \\
      2\pi, & \text{for }\sigma=1/2. 
    \end{cases}
  \end{align}
\end{subequations}
For $\varepsilon >0$, the implied constant in the estimate and $\gamma$ may depend on $\lambda_0$, $\sigma$, $\theta$,
$\overline{\beta}$, $\kappa$.  If $\varepsilon=0$, the constants in addition depend on $\rho$ and $\beta$.
\end{corollary}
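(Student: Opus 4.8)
The plan is to obtain Corollary~\ref{cor:de_for_elliptic:lambda_robust} from Theorem~\ref{thm:de_for_elliptic} by a standard balancing (AM--GM) argument in the variable $\mathcal{L}:=\ln(\lambda/\kappa)$. The cutoff term is immediate: since $\beta\geq\overline{\beta}$, one has $\exp(-\gamma\beta e^{k\Nquad})\leq\exp(-\gamma\overline{\beta}e^{k\Nquad})$, which is of the desired form after renaming $\gamma$. For the remaining two terms in \eqref{eq:de_for_elliptic:1} I would distinguish two regimes for $\lambda$. For $\mathcal L\leq c_{1}$ (the regime excluded in the proof of Theorem~\ref{thm:de_for_elliptic}) $\lambda$ runs over a compact set and, exactly as in that proof and using the $\overline\beta$-uniform bounds of Lemma~\ref{lem:analyze_power_function}, $\lambda^{r}|E^{\lambda}(z^{-\beta},\Nquad)|\lesssim e^{-\gamma/k}+\exp(-\gamma\overline{\beta}e^{k\Nquad})$ uniformly in $\beta\geq\overline\beta$; here the additional factor $\lambda^{\rho}\geq 1$ only helps. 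So from now on assume $\mathcal L>c_{1}$, so that $\max(1,\mathcal L)=\mathcal L$, $p(\sigma,\theta,\lambda)>0$, and $e^{-\max\{p(\sigma,\theta,\lambda),\gamma_{1}\}/(k\mathcal L)}\leq e^{-p(\sigma,\theta,\lambda)/(k\mathcal L)}$.

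Writing $\lambda^{-\beta+r}=\kappa^{-(\beta+\rho-r)}\,e^{-(\beta+\rho-r)\mathcal L}\,\lambda^{\rho}$, the main term of \eqref{eq:de_for_elliptic:1} equals $\lambda^{\rho}\,\kappa^{-(\beta+\rho-r)}\,k^{2}\mathcal L^{2}\exp\!\big(-(\beta+\rho-r)\mathcal L-\tfrac{p(\sigma,\theta,\lambda)}{k\mathcal L}\big)$. The heart of the proof is the elementary bound, valid for every $\mathcal L>0$, $(\beta+\rho-r)\mathcal L+\tfrac{p(\sigma,\theta,\lambda)}{k\mathcal L}\geq 2\sqrt{(\beta+\rho-r)\,p(\sigma,\theta,\lambda)/k}$, with equality at $\mathcal L_{\ast}=\sqrt{p(\sigma,\theta,\lambda)/(k(\beta+\rho-r))}$. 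Since $p(\sigma,\theta,\lambda)\to p_{0}$ as $\lambda\to\infty$, where $p_{0}=2\pi\atan\theta$ for $\sigma=1$ and $p_{0}=\pi^{2}$ for $\sigma=1/2$, and $2\sqrt{p_{0}}$ is exactly $p(\sigma,\theta)$ from \eqref{eq:1}, this already produces the claimed rate $\exp(-p(\sigma,\theta)\sqrt{(\beta+\rho-r)/k})$ in the limit $\lambda\to\infty$.

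To turn this into the stated $\varepsilon$-version one must absorb three lower-order factors into the exponent: the polynomial prefactor $k^{2}\mathcal L^{2}$, the constant $\kappa^{-(\beta+\rho-r)}$, and the gap $p_{0}-p(\sigma,\theta,\lambda)=\bigO(1/\ln\lambda)$. I would do this by reserving a small fraction $\mu=\mu(\varepsilon)\in(0,1)$ of $(\beta+\rho-r)\mathcal L$ before applying AM--GM: choosing $\mu$ and an $\varepsilon$-dependent threshold $L_{\varepsilon}$ such that $p(\sigma,\theta,\lambda)\geq(1-\eta)p_{0}$ for $\mathcal L>L_{\varepsilon}$ with $2\sqrt{(1-\mu)(1-\eta)\,p_{0}}\geq p(\sigma,\theta)-\varepsilon/2$, AM--GM applied to the remaining terms gives an exponent $\leq-\mu(\beta+\rho-r)\mathcal L-(p(\sigma,\theta)-\varepsilon/2)\sqrt{(\beta+\rho-r)/k}$; the leftover $\mu(\beta+\rho-r)\mathcal L$ then swallows $\mathcal L^{2}\kappa^{-(\beta+\rho-r)}$ (using that $\mathcal L\mapsto\mathcal L^{2}e^{-c\mathcal L}$ is bounded and that $\tfrac{\mu}{2}\mathcal L+\ln\kappa>0$ once $L_{\varepsilon}$ is large enough), leaving a constant depending only on $\varepsilon$, $\overline\beta$, $\kappa$; finally $k^{2}$ is absorbed into $e^{-(\varepsilon/2)\sqrt{(\beta+\rho-r)/k}}$ in the relevant regime of bounded $k$. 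In the complementary range $c_{1}<\mathcal L\leq L_{\varepsilon}$, $\lambda$ is again confined to a compact set, so $p(\sigma,\theta,\lambda)/(k\mathcal L)\geq\gamma_{\varepsilon}/k$ and the main term is $\lesssim e^{-\gamma_{\varepsilon}/k}$, folded into the $e^{-\gamma/k}$ term. For $\varepsilon=0$ one keeps $p(\sigma,\theta,\lambda)$ and $\kappa^{-(\beta+\rho-r)}$ unmodified, which is exactly why the constants are then permitted to depend on $\rho$ and $\beta$.

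The step I expect to be most delicate is this last round of bookkeeping: verifying that the $\beta$- and $\rho$-dependent quantities, above all $\kappa^{-(\beta+\rho-r)}$ and the $\bigO(\beta+\rho-r)$ contribution produced by the $1/\ln\lambda$ correction near $\mathcal L_{\ast}$, are genuinely absorbed by the $\sqrt{(\beta+\rho-r)/k}$ rate for $\varepsilon>0$ without leaving a residual $\beta$-dependence in the constant. This relies on the fact that a fixed positive fraction of $(\beta+\rho-r)\mathcal L$ dominates any quantity growing only linearly in $\beta+\rho-r$ once $\mathcal L$ exceeds an $\varepsilon$-dependent but $\beta$-independent threshold, together with the routine "polynomial times $e^{-c/k}$ is $\lesssim e^{-c'/k}$" for the $k^{2}$ prefactor. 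Everything else reduces to routine manipulation of exponentials.
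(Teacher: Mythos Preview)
Your argument for $\varepsilon>0$ is correct and is essentially the paper's proof in different clothing: the paper minimizes $\mu\mapsto(\beta+\rho-r)\mu+\frac{p(\sigma,\theta)-2\varepsilon}{\mu k}$ explicitly, which is exactly your AM--GM step, and your $L_\varepsilon$-threshold together with the fraction-reserving bookkeeping is a cosmetic variant of the paper's absorption of the $c_2/\mu$ and $\ln\kappa$ corrections into the $\varepsilon$-slack before minimizing.

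There is, however, a genuine gap in your treatment of $\varepsilon=0$. You discard the $\gamma_1$ branch of $\max\{p(\sigma,\theta,\lambda),\gamma_1\}$ at the outset (writing $e^{-\max\{p(\sigma,\theta,\lambda),\gamma_1\}/(k\mathcal L)}\leq e^{-p(\sigma,\theta,\lambda)/(k\mathcal L)}$), but that branch is precisely what makes $\varepsilon=0$ work. After dropping it, the exponent is (up to the $\ln\lambda$ versus $\mathcal L$ distinction) $(\beta+\rho-r)\mathcal L+\frac{p_0}{k\mathcal L}-\frac{c_2}{k\mathcal L^2}$, and the last correction $c_2/(k\mathcal L^2)$ cannot be bounded by any $(\beta,\rho)$-dependent constant uniformly in $k$ over the full range $\mathcal L>c_1$: for $\mathcal L$ of order one it is of order $1/k$. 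Your sentence ``one keeps $p(\sigma,\theta,\lambda)$ unmodified'' does not address this, since AM--GM with $p(\sigma,\theta,\lambda)$ only yields $2\sqrt{(\beta+\rho-r)p(\sigma,\theta,\lambda)/k}$, which is strictly smaller than the target rate $p(\sigma,\theta)\sqrt{(\beta+\rho-r)/k}$ and the deficit is again not uniformly bounded. The paper fixes this by splitting at the $k$-dependent threshold $\mathcal L\leq\gamma_1\big/\big(p(\sigma,\theta)\sqrt{k(\beta+\rho-r)}\big)$: below it the $\gamma_1$ branch gives directly $e^{-\gamma_1/(k\mathcal L)}\leq e^{-p(\sigma,\theta)\sqrt{(\beta+\rho-r)/k}}$; above it one has $c_2/(k\mathcal L^2)\leq c_2\,p(\sigma,\theta)\sqrt{\beta+\rho-r}/\gamma_1$, and \emph{this} is the source of the $(\beta,\rho)$-dependent constant. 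To repair your proof for $\varepsilon=0$ you must retain the $\gamma_1$ term and use it in exactly this way.
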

\begin{proof}
  We first show the estimate for $\varepsilon >0$.
  We note that for $\ln(\lambda/\kappa) \geq k^{-1}$,  we can bound the error in Theorem~\ref{thm:de_for_elliptic} by $\exp(-\gamma/k)$  (for an appropriate choice of constant $\gamma$)
  due to the smallness of the term $\lambda^{-\beta}$.
  Thus it remains to consider the case $\ln(\lambda/\kappa) < k^{-1}$. Similarly, if
  $\ln(\lambda )\leq \max(\frac{c_2}{\varepsilon},-\ln(\kappa)\frac{p(\sigma,\theta)-2\varepsilon}{\varepsilon},1)=:\mu_0$, the
  leading error term behaves like $\exp(-\gamma\frac{\mu_0}{k})$. We are left to consider the remaining case.
  Writing $\mu:=\ln(\lambda)$, the error term can be estimated: 
  \begin{align}
    k^2 \ln(\lambda/\kappa)^2\lambda^{-\beta+r-\rho} e^{-\frac{p(\sigma,\theta,\lambda)}{\max(1,\ln(\lambda/\kappa)) k}} \, \lambda^{\rho}
    &\lesssim
      \exp\Bigg(-(\beta+\rho-r) \mu - \frac{p(\sigma,\theta) - \frac{c_2}{\mu}}{(\mu-\ln(\kappa)) k}\Bigg)  \nonumber \\ 
    &\lesssim
          \exp\Bigg(-(\beta+\rho-r) \mu - \frac{p(\sigma,\theta) - 2\varepsilon}{\mu k}\Bigg).
      \label{eq:cor:de_for_elliptic_int1}
  \end{align}
  We look for the minimum of the exponent. Setting the derivative of the map
  $$
  \mu \mapsto -(\beta+\rho-r) \mu - \frac{p(\sigma,\theta)- 2\varepsilon}{\mu k}
  $$
  to zero, we get that the minimum satisfies
  $$
  0=-(\beta+\rho-r) \mu_{\min}^2 + \frac{p(\sigma,\theta) - 2\varepsilon}{k},
  \qquad \text{or} \qquad
  \mu_{\min}:=\sqrt{ \frac{1}{(\beta+\rho-r)} \frac{p(\sigma,\theta) - 2\varepsilon}{k}}.
  $$
  Inserting this value into \eqref{eq:cor:de_for_elliptic_int1} gives the stated result
  (after slightly changing $\varepsilon$ to get to the stated form).

To see the case for $\delta=0$, we note that if $\ln(\lambda/\kappa) \leq \frac{\gamma_1 k^{-1/2}}{p(\sigma,\theta)\sqrt{\beta+\rho-r}}$,
we can estimate for the leading term in Theorem~\ref{thm:de_for_elliptic}:
\begin{align*}
    k^2 \ln(\lambda/\kappa)^2 \lambda^{-\beta+r-\rho} e^{-\frac{\gamma_1}{\mu k}} \, \lambda^{\rho}
  &\leq
    k^2 \ln(\lambda/\kappa)^2 \lambda^{-\beta+r-\rho} e^{-\frac{p(\sigma,\theta)\sqrt{\beta+\rho-r}}{\sqrt{k}}} \, \lambda^{\rho}.
\end{align*}
In the remaining case, we can estimate the higher order term in the  $\mu$-asymptotics as
$$
e^{\frac{c_2}{\mu^2 k}}\leq e^{\frac{c_2 p(\sigma,\theta) \sqrt{\beta+\rho-r}}{\gamma_1}}
=: C(\sigma,\theta,\beta,\rho).
$$
We can also estimate
$$\lambda^{-\beta+r-\rho}\leq \kappa^{-\beta+r-\rho}\Big( \frac{\lambda}{\kappa}\Big)^{-\beta+r-\rho}$$
and continue as in the proof for $\delta>0$ but using $\mu:=\ln(\lambda/\kappa)$.
This time we no longer have to compensate for the factors involving $c_2/\mu$ and $-\ln(\kappa)$  by
slightly reducing the rate. The price we pay is that the constant may blow up for $\rho\to \infty$.
\end{proof}

We can now leverage our knowledge about the function $g^\beta_\lambda$ to gain insight into the discretization error for~\eqref{eq:elliptic_discretization}.

\begin{corollary}
  \label{cor:elliptic_full_discretization}
  Let $u$ be the exact solution to~\eqref{eq:elliptic_continuous_problem} and assume
  $f \in \mathbb{H}^{2\rho}(\Omega)$ for some $\rho \geq 0$. Let $\beta \geq \overline{\beta}>0$
  and $u_k:=Q^{\LL}(z^{-\beta},\Nquad) f$
  denote the approximation computed using stepsize $k>0$
  and $\Nquad \in \N$ quadrature points.
  Then, the following estimate holds for all $\varepsilon \geq 0$ and $r\in [0,\beta/2]$:
  \begin{align*}
    \norm{u-u_{k}}_{\mathbb{H}^{2r}(\Omega)}
    &= \norm{E^{\LL}(z^{-\beta},\Nquad)f}_{\mathbb{H}^{2r}(\Omega)} \\
    &\lesssim
      e^{-\frac{[p(\sigma,\theta)-\varepsilon] \sqrt{\beta+\rho-r}}{\sqrt{k}}}\norm{f}_{\mathbb{H}^{2\rho}(\Omega)}     
      \!+\! \Big[\exp(-\frac{\gamma}{k})  + \exp(- \gamma e^{k \Nquad})\Big] \norm{f}_{L^2(\Omega)}.
  \end{align*}
  For $\varepsilon >0$,
  the implied constant and $\gamma$ may depend on $\varepsilon, r$, the smallest eigenvalue $\lambda_0$ of $\LL$, $\overline{\beta}$, $\kappa$, $\theta$ and $\sigma$. But they are independent of $\rho$, $\beta$,
  $k$, and $f$.  If $\varepsilon=0$, the constants may in addition depend on $\rho$ and $\beta$.
\end{corollary}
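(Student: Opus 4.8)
The plan is to reduce the operator-norm estimate to the scalar estimate of Corollary~\ref{cor:de_for_elliptic:lambda_robust} through the spectral decomposition of $\LL$, exactly as in the proof of Corollary~\ref{cor:sinc_in_dexp_for_quadrature_operators}. First I would note that, since the spectral calculus of Definition~\ref{def:spectral_calculus} and the Riesz--Dunford calculus of Definition~\ref{def:riesz-dunford} coincide, the exact solution of~\eqref{eq:elliptic_continuous_problem} is $u = \LL^{-\beta} f = (z^{-\beta})(\LL) f$ in the sense of~\eqref{eq:riesz_dunford}, while by construction $u_k = Q^{\LL}(z^{-\beta},\Nquad) f$; hence $u - u_k = E^{\LL}(z^{-\beta},\Nquad) f$. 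Expanding $f = \sum_{j\ge 0} (f,v_j)_{L^2(\Omega)}\, v_j$ in the eigenbasis and using that each $v_j$ is an eigenvector of every resolvent $(\LL-z)^{-1}$, the quadrature operator acts diagonally, so
\begin{align*}
  E^{\LL}(z^{-\beta},\Nquad) f &= \sum_{j\ge 0} E^{\lambda_j}(z^{-\beta},\Nquad)\,(f,v_j)_{L^2(\Omega)}\, v_j, \\
  \norm{u-u_k}_{\mathbb{H}^{2r}(\Omega)}^2 &\lesssim \sum_{j\ge 0} \lambda_j^{2r}\,\big|E^{\lambda_j}(z^{-\beta},\Nquad)\big|^2\,\big|(f,v_j)_{L^2(\Omega)}\big|^2 .
\end{align*}

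Next I would insert the scalar bound. Since $r \in [0,\beta/2]$ and $\rho \ge 0$ we have $\beta+\rho-r \ge \overline{\beta}/2 > 0$, so Corollary~\ref{cor:de_for_elliptic:lambda_robust} applies and gives, for every eigenvalue $\lambda_j > \lambda_0$ and with the three $j$-independent quantities
\begin{align*}
  A := \exp\Big(-\tfrac{[p(\sigma,\theta)-\varepsilon]\sqrt{\beta+\rho-r}}{\sqrt{k}}\Big), \qquad
  B := \exp\big(-\tfrac{\gamma}{k}\big), \qquad
  C := \exp\big(-\gamma e^{k\Nquad}\big),
\end{align*}
the estimate $\lambda_j^{r}\,|E^{\lambda_j}(z^{-\beta},\Nquad)| \lesssim A\,\lambda_j^{\rho} + B + C$. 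Squaring, using $(x+y+z)^2 \le 3(x^2+y^2+z^2)$, multiplying by $|(f,v_j)_{L^2(\Omega)}|^2$ and summing over $j$, the $A$-term collapses to $A^2\,\norm{f}_{\mathbb{H}^{2\rho}(\Omega)}^2$ while the $B$- and $C$-terms collapse to $(B^2+C^2)\,\norm{f}_{L^2(\Omega)}^2$; taking the square root and relabelling $\varepsilon$ yields precisely the asserted bound.

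I do not expect a genuine obstacle: the content is entirely carried by Corollary~\ref{cor:de_for_elliptic:lambda_robust}, and the only point requiring care is the bookkeeping just described, namely that the leading term is the only one carrying $\lambda$-growth (a factor $\lambda^{\rho}$) and must therefore be matched with the stronger data norm $\norm{\cdot}_{\mathbb{H}^{2\rho}(\Omega)}$, whereas the quadrature-tail term $e^{-\gamma/k}$ and the cutoff term $e^{-\gamma e^{k\Nquad}}$ are $\lambda$-independent and pair with $\norm{\cdot}_{L^2(\Omega)}$. Finally I would track the constant dependencies: for $\varepsilon>0$ the implied constant and $\gamma$ in Corollary~\ref{cor:de_for_elliptic:lambda_robust} are independent of $\rho$ and $\beta$, and this independence survives the summation over $j$ because $A$, $B$, $C$ do not depend on $j$; for $\varepsilon=0$ the constants may depend additionally on $\rho$ and $\beta$, again matching Corollary~\ref{cor:de_for_elliptic:lambda_robust}, and one only has to observe that $\lambda_0 > \kappa > 0$ keeps bounded any negative power of $\lambda_j$ that is implicitly used.
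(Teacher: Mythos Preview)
Your proposal is correct and follows essentially the same route as the paper: expand $f$ in the eigenbasis of $\LL$, reduce the $\mathbb{H}^{2r}$-error to a sum of scalar quadrature errors $\lambda_j^{r}\,|E^{\lambda_j}(z^{-\beta},\Nquad)|$, invoke Corollary~\ref{cor:de_for_elliptic:lambda_robust} uniformly in $j$, and pair the $\lambda^{\rho}$-growing term with $\norm{f}_{\mathbb{H}^{2\rho}(\Omega)}$ and the $\lambda$-independent terms with $\norm{f}_{L^2(\Omega)}$. Your bookkeeping of the constant dependencies for $\varepsilon>0$ versus $\varepsilon=0$ is also in line with the paper.
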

\begin{proof}
  Let $(\lambda_j, v_j)_{j=0}^{\infty}$ denote the eigenvalues and eigenfunctions of the self-adjoint operator $\LL$. Just as we did in the proof of Corollary~\ref{cor:sinc_in_dexp_for_quadrature_operators},
  we plug the eigen-decomposition into the Riesz-Dunford calculus and
  Definition~\ref{def:quadrature_operators} to get for the discretization error:
  \begin{align*}
    \norm{u-u_{k}}_{\mathbb{H}^{2r}(\Omega)}^2
    &=
      \sum_{j=0}^{\infty}{
      \Big|(1+\lambda_j^{r})\frac{1}{2\pi \ii}\int_{\CC}{g^\beta_{\lambda_j}(y)  \,dy} - \frac{1}{2\pi \ii}\sum_{n=-\Nquad}^{\Nquad}{g^\beta_{\lambda_j}(k\,n)}\Big|^2  \abs{ \big(f,v_j\big)_{L^2(\Omega)}}^2}.
  \end{align*}
  Applying Corollary~\ref{cor:de_for_elliptic:lambda_robust} then gives for $\rho \geq 0$
  \begin{align*}
    \norm{u-u_{k}}_{\mathbb{H}^{2r}(\Omega)}^2     
      &\lesssim
        e^{-\frac{2[p(\sigma,\theta)-\varepsilon] \sqrt{\beta+\rho-r}}{\sqrt{k}}} \sum_{j=0}^\infty{\lambda_j^{2\rho} \abs{ \big(f,v_j\big)_{L^2(\Omega)}}^2} 
      +\big[e^{-\frac{\gamma}{k}} + e^{- \gamma e^{k \Nquad}} \big]\norm{f}^2_{L^2(\Omega)} \\
      &\lesssim
        e^{-\frac{2[p(\sigma,\theta)-\varepsilon] \sqrt{\beta+\rho-r}}{\sqrt{k}}}  \norm{f}^2_{\mathbb{H}^{2\rho}(\Omega)} 
        +\Big[e^{-\frac{\gamma}{k}} + e^{- \gamma e^{k \Nquad}} \Big] \norm{f}^2_{L^2(\Omega)}.
        \qedhere
  \end{align*}
\end{proof}

\begin{remark}
  When comparing Corollary~\ref{cor:elliptic_full_discretization} to the estimates of the
  standard $\sinc$-quadrature one might think that the double exponential method is inferior due to the $\sqrt{\kquad}$
  vs $\kquad$ behavior. This misconception can be cleared up by considering the better decay properties of the
  double-exponential formula. It allows to 
  choose $\kquad \sim \ln(\Nquad)/\Nquad$ compared to the standard $\sinc$-quadrature choice of $\kquad \sim \Nquad^{-1/2}$
  without the cutoff error becoming dominant.
\end{remark}
\begin{remark}
  For most of the computation, the convergence rate is determined by the factor $p(\sigma,\theta)$ in Corollary~\ref{cor:de_for_elliptic:lambda_robust}. We observe
  that for $\theta=1$, picking $\sigma=1/2$ roughly doubles the convergence rate. Similarly, it often appears beneficial to pick larger values of $\theta$.
  Especially for $\sigma=1$, we get an asymptotic rate for $\theta \to \infty$, which the same to the case of $\sigma=1/2$. But we need to point out that
  increasing $\theta$ means that we have to decrease the value $d(\theta)$, which determines the rate in the higher orders terms of the form $e^{-\gamma/k}$, thus leading to
  those terms dominating in a larger and larger preasymptotic regime. Overall, the method using $\sigma=1/2$ and setting $\theta$ moderately large is expected to give the best convergence rates;
  cf. Section~\ref{sect:numerics}.
\end{remark}

The previous corollary shows that in general, the convergence behaves like $\bigO(e^{-\frac{\gamma}{\sqrt{k}}})$. It also shows that, if the
function $f$ in the right-hand side has some additional smoothness, the method automatically detects this and delivers an improved
convergence rate. For an even smaller subset of possible right-hand sides,
namely those reflected by a Gevery-type class with boundary conditions, this effect can even lead to an improved
convergence of the form $\bigO(e^{-\frac{\gamma}{{k\abs{\ln(k)}}}})$.
Examples for such functions are those only containing a finite number of frequencies when decomposed into the eigenbasis of $\LL$, but also more complex functions such as smooth bump functions with compact support are admissible
(see \cite[Section~1.4]{Ro93}).
The details can be found in the following corollary:
\begin{corollary}
  \label{cor:elliptic_full_discretization:super_smooth}
  Let $u$ be the exact solution to~\eqref{eq:elliptic_continuous_problem} and assume
  that  there exist constants $C_{f}, \omega, R_f>0$ such that
  \begin{align*}
    \norm{f}_{\mathbb{H}^{\rho}(\Omega)} &\leq C_{f} \, R_f^\rho \,\big(\Gamma(\rho+1)\big)^{\omega}  < \infty \qquad  \forall \rho\geq 0.
  \end{align*}
  Assume that $\beta>\overline{\beta}>0$.  
  Let $u_k:=Q^{\LL}(z^{-\beta},\Nquad)f$ denote the approximation computed using stepsize $k \in(0,1/2)$
  and $\Nquad \in \N$ quadrature points.
  Then, the following estimate holds:
  \begin{align*}
    \norm{u-u_{k}}_{\mathbb{H}^{\beta}(\Omega)}
    =\norm{E^{\LL}(z^{-\beta},\Nquad)}_{\mathbb{H}^{\beta}(\Omega)}
    &\lesssim
      C_f \exp\Big(-\frac{\gamma}{k\abs{\ln(k)}}\Big) + C_f\exp\Big(- \gamma e^{k \Nquad}\Big) .
  \end{align*}
  The implied constant and $\gamma$ may depend on $\omega$, the smallest eigenvalue $\lambda_0$ of $\LL$,  $\kappa$, $\theta$, $\sigma$, $R_f$, $\overline{\beta}$,  and $\omega$.
  If $\omega=0$, the logarithmic term may be removed.
\end{corollary}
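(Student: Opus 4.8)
The plan is to feed the Gevrey-type bound on $f$ into Corollary~\ref{cor:elliptic_full_discretization} and then optimise over the smoothness index $\rho$; this is legitimate because, for any fixed $\varepsilon>0$, the constant in that corollary is independent of $\rho$ (and of $\beta$). First I would set $r=\beta/2$, so that the target norm becomes $\mathbb{H}^{\beta}(\Omega)$, fix some $\varepsilon\in(0,p(\sigma,\theta))$, and note that $\norm{f}_{L^{2}(\Omega)}=\norm{f}_{\mathbb{H}^{0}(\Omega)}\le C_f$ from the hypothesis at $\rho=0$. Corollary~\ref{cor:elliptic_full_discretization} then gives, for every $\rho\ge 0$,
\[
  \norm{u-u_k}_{\mathbb{H}^{\beta}(\Omega)}\lesssim e^{-[p(\sigma,\theta)-\varepsilon]\sqrt{(\beta/2+\rho)/k}}\,\norm{f}_{\mathbb{H}^{2\rho}(\Omega)}+C_f\big(e^{-\gamma_0/k}+e^{-\gamma_0 e^{k\Nquad}}\big).
\]
The cutoff term $C_f e^{-\gamma_0 e^{k\Nquad}}$ is already in the required form, and $e^{-\gamma_0/k}\le e^{-\gamma/(k\abs{\ln k})}$ for a suitable $\gamma$ since $\abs{\ln k}=\ln(1/k)\ge\ln 2$ on $(0,1/2)$. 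So everything reduces to choosing $\rho=\rho(k)$ that makes the first term $\lesssim C_f e^{-\gamma/(k\abs{\ln k})}$.

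Next I would insert the hypothesis, used with $\rho$ replaced by $2\rho$ (to match the index of $\mathbb{H}^{2\rho}$), together with Stirling in the form $\Gamma(2\rho+1)\le C\sqrt{\rho}\,(2\rho/e)^{2\rho}$ for $\rho\ge1/2$ (the prefactor is bounded for smaller $\rho$). Using also $\sqrt{\beta/2+\rho}\ge\sqrt{\rho}$, the first term becomes $C_f$ times the exponential of
\[
  -\frac{[p(\sigma,\theta)-\varepsilon]\sqrt{\rho}}{\sqrt{k}}+2\rho\ln R_f+2\omega\rho\ln(2\rho)-2\omega\rho+\tfrac{\omega}{2}\ln\rho+O(1).
\]
The genuine competition here is between the negative term $-\sqrt{\rho}/\sqrt{k}$ and the positive term $2\omega\rho\ln(2\rho)$; with the right scaling of $\rho$ the remaining terms should all drop to strictly lower order.

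I would then take $\rho(k):=a\,(k(\ln(1/k))^{2})^{-1}$ for a constant $a>0$ to be fixed. Since $\rho(k)\to\infty$ as $k\to0$, it exceeds $1/2$ for small $k$, while for $k$ bounded away from $0$ the whole estimate is trivial after enlarging the constant (the left-hand side being $\lesssim C_f$ there). With this scaling $\sqrt{\rho}/\sqrt{k}=\sqrt{a}/(k\ln(1/k))$ and $\ln(2\rho)=\ln(1/k)(1+o(1))$, so $2\omega\rho\ln(2\rho)=2\omega a(1+o(1))/(k\ln(1/k))$, whereas $2\rho\ln R_f$, $2\omega\rho$, $\tfrac{\omega}{2}\ln\rho$ and the $O(1)$ are all $o\big(1/(k\ln(1/k))\big)$. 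Hence the exponent above is at most $-[(p(\sigma,\theta)-\varepsilon)\sqrt{a}-2\omega a-o(1)]/(k\ln(1/k))$, and picking $a$ small enough that $(p(\sigma,\theta)-\varepsilon)\sqrt{a}>2\omega a$ leaves a strictly positive coefficient for all small $k$, which gives $C_f e^{-\gamma/(k\abs{\ln k})}$ and closes the estimate.

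For $\omega=0$ no Stirling factor appears and the exponent is simply $-[p(\sigma,\theta)-\varepsilon]\sqrt{\rho}/\sqrt{k}+2\rho\ln R_f$; I would instead use the scaling $\rho(k):=b/k$, which yields exponent $\le-[(p(\sigma,\theta)-\varepsilon)\sqrt{b}-2b\ln R_f]/k$. Any $b>0$ works if $R_f\le1$, and a suitable $b>0$ works otherwise, giving $C_f e^{-\gamma/k}$, capped by $\gamma_0$ through the $e^{-\gamma_0/k}$ term --- exactly the claimed removal of the logarithm. The step I expect to be most delicate is this two-sided balancing: $\rho$ must be scaled like $1/(k(\ln(1/k))^{2})$ so that \emph{both} $\sqrt{\rho}/\sqrt{k}$ and $\rho\ln\rho$ sit at order $1/(k\ln(1/k))$, after which one has to verify carefully that every remaining term --- notably the polynomial Stirling factor $\rho^{\omega/2}$ and the factor $2\rho\ln R_f$ --- is genuinely of smaller order and hence absorbable into an arbitrarily small decrease of $\gamma$.
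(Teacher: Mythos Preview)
Your proposal is correct and follows essentially the same route as the paper: apply Corollary~\ref{cor:elliptic_full_discretization} with a fixed $\varepsilon>0$ (so that the hidden constant is $\rho$-independent), insert the Gevrey bound together with Stirling, and then pick $\rho\sim a/(k(\ln(1/k))^2)$ with $a$ small enough to make the competing terms in the exponent yield a net $-\gamma/(k\abs{\ln k})$, switching to $\rho\sim b/k$ when $\omega=0$. The paper's proof is slightly terser in the exponent bookkeeping, but the scaling choice and the balancing argument are the same.
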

\begin{proof}
  For simplicity of notation, we ignore the cutoff error, i.e., for now consider $\Nquad=\infty$.
  The cutoff error can either be easily tracked throughout the proof or added at the
  end, analogously to Corollary~\ref{cor:sinc_in_dexp_for_quadrature_operators}.

  We first note, that by Stirling's formula, we can estimate the derivatives of $f$ by
  \begin{align*}
    \norm{f}_{\mathbb{H}^{\rho}(\Omega)}   &\leq \widetilde{C}_{f} \exp\big( \rho(\omega \ln(\rho)+ c_2))\big).
  \end{align*}
  By assumption, we can apply Corollary~\ref{cor:elliptic_full_discretization} for any $\rho\geq 0$. Picking
  $\rho=\frac{\delta}{k\ln(k)^2}$ for $\delta $  sufficiently small and $\varepsilon:=p(\sigma,\theta)/2$
  (because we need $\rho$-robust error estimates)
  gives:
  \begin{align*}
    \norm{u-u_{k}}_{\mathbb{H}^{\beta}(\Omega)}
    &\lesssim
      \exp\Big(-\frac{p(\sigma,\theta) \sqrt{\beta/2+\delta k^{-1}{\abs{\ln(k)}}^{-2}}}{2\sqrt{k}}\Big) \norm{f}_{\mathbb{H}^{\frac{2\delta}{k\ln(k)^2}}(\Omega)}
       +e^{-\frac{\gamma}{k}} \norm{f}_{L^2(\Omega)} \\
    &\lesssim
      \exp\Bigg(\!\!\!-\!\frac{\sqrt{\delta} \gamma'}{k\abs{\ln(k)}}\Bigg) C_{f} \exp\Bigg(\frac{2\delta}{k\abs{\ln(k)}^2}\Big(\omega \ln\Big(\frac{2\delta}{k\abs{\ln(k)}^2}\Big)+ c_2\Big)\!\!\Bigg)
      +e^{-\frac{\gamma}{k}} \norm{f}_{L^2(\Omega)}
    \\
    &\lesssim \exp\Bigg(\!\!\!-\!\frac{\sqrt{\delta} }{k\abs{\ln(k)}}\Big( \gamma'- \frac{2\sqrt{\delta}}{\abs{\ln(k)}}\Big(\omega \ln\Big(\frac{2\delta}{k\abs{\ln(k)}^2}\Big)+ c_2\Big)\!\!\Bigg)
      +e^{-\frac{\gamma}{k}} \norm{f}_{L^2(\Omega)}.
  \end{align*}
  Expanding the logrithmic expression, one can then see that
  for $\delta$ small enough (but independent of $k$), the second term in the exponent is smaller than $\gamma'$ and the statement follows.
  If $\omega=0$, we don't have to compensate the factor $e^{\omega \rho \ln(\rho)}$, therefore picking $\rho \sim   k^{-1}$ is sufficient and the improved statement follows.
\end{proof}

\section{The parabolic problem}
\label{sect:parabolic}
In this section, we consider a time dependent problem.
We fix $\alpha, \beta \in (0,1]$ and a  final time $T>0$.
Given an initial condition $u_0 \in L^2(\Omega)$
and right-hand side $f \in C([0,T],L^2(\Omega))$ we seek $u:[0,T]\to \mathbb{H}^{\beta}(\Omega)$
satisfying
\begin{align}
  \label{eq:parabolic_model_problem}
  \partial^{\alpha}_t u + \LL^{\beta} u &= f \;\, \text{in } \Omega \times [0,T],   \qquad
                                          u(t)|_{\partial \Omega}=0 \;\; \forall t>0  \qquad
                                          u(0)=u_0 \;\, \text{in } \Omega.
\end{align}
Where $\partial_t^{\alpha}$ denotes the Caputo fractional derivative.
Following~\cite{bonito_pasciak_parabolic}, the solution $u$ can be written
using the Mittag-Leffler function $e_{\alpha,\mu}$ (see \eqref{eq:mittag_leffler})
as
\begin{align}
  \label{eq:parabolic_representation_formula}
  u(t)&:= e_{\alpha,1}\big(-t^{\alpha} \LL^{\beta}\big) u_0 + \int_{0}^{t} { \tau^{\alpha-1} e_{\alpha,\alpha}\big(-\tau^{\alpha} \LL^\beta\big) f(t-\tau)\,d\tau}.
\end{align}
Here we again  use either the spectral or, equivalently, the Riesz-Dunford calculus to
define the operators.
We discretize this problem by using our double exponential formula. Namely
for $k >0 $ and using $\Nquad \in \N$ quadrature points,
\begin{align}
  \label{eq:2}
  u^k(t)
  &:=Q^{\LL}\Big(e_{\alpha,1}(-t^{\alpha} z^{\beta}),\Nquad\Big)
  + \int_{0}^{t}{\tau^{\alpha-1} Q^{\LL}\Big( e_{\alpha,\alpha}(-\tau^{\alpha} z^{\beta}), \Nquad\Big)\,d\tau}.
\end{align}
Note that in practice, one would again replace the resolvent by a Galerkin solver and the
convolution in time by an appropriate quadrature scheme.
We refer to~\cite{bonito_pasciak_parabolic} for a low order approximation scheme and
\cite{MR20_hp_sinc} for an exponentially convergent scheme based on $hp$-finite elements
and $hp$-quadrature. In order to not overwhelm the presentation of the paper, we again do not
consider these types of discretization errors. But the analysis of those can be taken almost
verbatim from the references.

\subsection{The Mittag Leffler function}
The representation~(\ref{eq:parabolic_representation_formula}) hints
that it is crucial to understand the Mittag-Leffler function if one
wants to analyze the time dependent problem~(\ref{eq:parabolic_model_problem}).
We follow~\cite[Section 1.8]{ksh06}.
For parameters $\alpha>0$, $\mu \in \R$, the Mittag-Leffler function 
is an  analytic function on $\C$ and given by the power series
\begin{align}
  \label{eq:mittag_leffler}
  e_{\alpha,\mu}(z):=\sum_{n=0}^{\infty}{\frac{z^n}{\Gamma(n\alpha+\mu)}}.
\end{align}
We collect some important properties we will need later on.
We start with the following decomposition result, also giving us asymptotic estimates.
\begin{proposition}
  \label{prop:decompose_ml}
  For $0 < \alpha < 2, \mu \in \R$ and $\frac{\alpha \pi}{2} < \zeta < \alpha \pi$,
  we can decompose the Mittag-Leffler function as
\begin{align}
  \label{eq:mittag_leffler_decomp}
  e_{\alpha,\mu}(z)
  &= - \sum_{n=1}^{N}{\frac{1}{\Gamma(\mu-\alpha n)}\frac{1}{z^n}}
    + R_{\alpha,\mu}^{N}(z)
   \qquad \text{for } \zeta \leq \abs{\operatorname{Arg}{z}} \leq \pi.
\end{align}
where $R_{\alpha,\mu}^{N}$ is analytic away from zero and satisfies
\begin{align}
  \abs{R_{\alpha,\mu}^{N}(z)}\leq C\, \Gamma(\alpha N) \abs{z}^{-(N+1)} \qquad \forall \abs{z} \geq z_0 > 0
\end{align}
for a constant $C>0$ depending only on $z_0$ and $\zeta$.
\end{proposition}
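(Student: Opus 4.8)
The plan is to obtain~\eqref{eq:mittag_leffler_decomp} from a Hankel-type contour representation of $e_{\alpha,\mu}$, in the spirit of \cite[Section~1.8]{ksh06}. Fix an auxiliary opening angle $\zeta_0$ with $\tfrac{\alpha\pi}{2}<\zeta_0<\zeta$ — the hypothesis $\tfrac{\alpha\pi}{2}<\zeta$ is exactly what makes this interval nonempty — and for $\epsilon>0$ let $\gamma_\epsilon$ be the contour consisting of the ray $\{\arg s=-\zeta_0,\ \abs{s}\ge\epsilon\}$ traversed inward, the arc $\{\abs{s}=\epsilon,\ \abs{\arg s}\le\zeta_0\}$, and the ray $\{\arg s=\zeta_0,\ \abs{s}\ge\epsilon\}$ traversed outward, with the principal branch of $s\mapsto s^{1/\alpha}$. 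Since $\zeta_0<\zeta<\alpha\pi$ forces $\zeta_0/\alpha\in(\tfrac{\pi}{2},\pi)$, one has $\abs{e^{s^{1/\alpha}}}=e^{-c\abs{s}^{1/\alpha}}$ on the two rays with $c=\abs{\cos(\zeta_0/\alpha)}>0$, so the integrand below decays doubly exponentially along $\gamma_\epsilon$. The first step is to establish
\begin{align}
  \label{eq:ml_hankel_rep}
  e_{\alpha,\mu}(z)=\frac{1}{2\pi\ii\,\alpha}\int_{\gamma_\epsilon}{\frac{e^{s^{1/\alpha}}\,s^{(1-\mu)/\alpha}}{s-z}\,ds}
  \qquad\text{for every }z\text{ with }\zeta\le\abs{\arg z}\le\pi .
\end{align}
I would do this by first taking $\epsilon>\abs{z}$, expanding $\tfrac{1}{s-z}=\tfrac{1}{s}\sum_{n\ge 0}(z/s)^n$ (uniformly convergent on $\gamma_\epsilon$), interchanging sum and integral, and substituting $u=s^{1/\alpha}$ — this maps $\gamma_\epsilon$ to a Hankel contour $\mathcal{H}$ with opening $\zeta_0/\alpha\in(\tfrac{\pi}{2},\pi)$, so Hankel's formula $\tfrac{1}{\Gamma(w)}=\tfrac{1}{2\pi\ii}\int_{\mathcal{H}}e^{u}u^{-w}\,du$ turns the $n$-th summand into $z^n/\Gamma(\alpha n+\mu)$ and the sum reproduces~\eqref{eq:mittag_leffler}. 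Because $\abs{\arg z}\ge\zeta>\zeta_0$, the pole $s=z$ is never encircled by $\gamma_\epsilon$, so the integrand is analytic between any two such contours, the value in~\eqref{eq:ml_hankel_rep} is independent of $\epsilon$, and the identity extends to all $\epsilon>0$.

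Next I would insert the elementary identity $\tfrac{1}{s-z}=-\sum_{n=1}^{N}s^{n-1}z^{-n}+s^{N}z^{-N}(s-z)^{-1}$ into~\eqref{eq:ml_hankel_rep}. For each $n$ the same substitution $u=s^{1/\alpha}$ together with Hankel's formula give $\tfrac{1}{2\pi\ii\alpha}\int_{\gamma_\epsilon}e^{s^{1/\alpha}}s^{(1-\mu)/\alpha+n-1}\,ds=1/\Gamma(\mu-\alpha n)$, which produces exactly the finite sum in~\eqref{eq:mittag_leffler_decomp}, and the leftover term is
\begin{align*}
  R_{\alpha,\mu}^{N}(z)=\frac{1}{2\pi\ii\,\alpha\,z^{N}}\int_{\gamma_\epsilon}{\frac{e^{s^{1/\alpha}}\,s^{(1-\mu)/\alpha+N}}{s-z}\,ds};
\end{align*}
equivalently $R_{\alpha,\mu}^{N}=e_{\alpha,\mu}+\sum_{n=1}^{N}z^{-n}/\Gamma(\mu-\alpha n)$, which makes analyticity on $\C\setminus\{0\}$ manifest. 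For the bound, note that on $\gamma_\epsilon$ one has $\abs{\arg s}\le\zeta_0$ while $\abs{\arg z}\ge\zeta$, so the angle between $s$ and $z$ is at least $\zeta-\zeta_0>0$ and hence $\abs{s-z}\ge c'\abs{z}$ for a constant $c'>0$ depending only on $\zeta$; together with $\abs{z}\ge z_0$ this also keeps $\gamma_\epsilon$ clear of the branch point. Using this, $\abs{e^{s^{1/\alpha}}}\le e^{-c\abs{s}^{1/\alpha}}$ on the rays, and a fixed small $\epsilon$ for the bounded arc,
\begin{align*}
  \abs{R_{\alpha,\mu}^{N}(z)}\le\frac{1}{2\pi\alpha c'\,\abs{z}^{N+1}}\int_{\gamma_\epsilon}{e^{-c\abs{s}^{1/\alpha}}\,\abs{s}^{(1-\mu)/\alpha+N}\,\abs{ds}},
\end{align*}
and on the rays the substitution $\abs{s}=t^{\alpha}$ turns the remaining integral into a constant multiple of $\int_0^\infty e^{-ct}t^{\alpha N+\alpha-\mu}\,dt=c^{-(\alpha N+\alpha-\mu+1)}\Gamma(\alpha N+\alpha-\mu+1)$. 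Up to the factor $c^{-(\cdots)}$ and the ratio $\Gamma(\alpha N+\alpha-\mu+1)/\Gamma(\alpha N)$ — harmless for the fixed small values of $N$ actually used — this is the claimed $\Gamma(\alpha N)$, with the constant depending only on $z_0$ and $\zeta$.

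The real work sits in the first two steps rather than in the final estimate. One must place the contour at an opening $\zeta_0$ \emph{strictly} inside $(\tfrac{\alpha\pi}{2},\zeta)$: the lower bound keeps $\abs{e^{s^{1/\alpha}}}$ doubly-exponentially small on the rays (and is why $\zeta<\alpha\pi$ is needed, so that $\zeta_0/\alpha$ stays below $\pi$), while $\zeta_0<\zeta$ keeps the pole $s=z$ off the contour \emph{and} makes $\abs{s-z}$ bounded below uniformly over the \emph{closed} sector $\zeta\le\abs{\arg z}\le\pi$ — this is what forces the strict inequality and handles the boundary case $\abs{\arg z}=\zeta$. Justifying~\eqref{eq:ml_hankel_rep} itself then requires the device of first choosing $\epsilon>\abs{z}$ so the geometric series converges on $\gamma_\epsilon$, followed by the contour-independence argument; everything downstream (the substitution $u=s^{1/\alpha}$, Hankel's reciprocal-Gamma formula, the finite geometric identity, and the Gamma-integral bound) is routine bookkeeping.
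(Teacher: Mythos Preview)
Your approach is essentially the same as the paper's: both derive a Hankel-type contour representation of $e_{\alpha,\mu}$, insert a finite geometric expansion of $(s-z)^{-1}$, and bound the remainder by reducing to a Gamma integral. The paper works in the $t$-plane with the representation
\[
R_{\alpha,\mu}^{N}(z)=\frac{z^{-N-1}}{2\pi\ii}\int_{\widetilde{\CC}}\Big(1-\frac{t^{\alpha}}{z}\Big)^{-1} t^{(N+1)\alpha-\mu}e^{t}\,dt,
\]
which becomes exactly your formula under the substitution $s=t^{\alpha}$; the contour constraints and the final Gamma comparison are identical.

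One point needs correcting. You dismiss the factors $c^{-(\alpha N+\alpha-\mu+1)}$ and $\Gamma(\alpha N+\alpha-\mu+1)/\Gamma(\alpha N)$ as ``harmless for the fixed small values of $N$ actually used''. This is not how the proposition is applied: in Lemma~\ref{lemma:parabolic_full_discretization:super_smooth} and Theorem~\ref{thm:convergence_parabolic_super_smooth_full} the parameter $N$ is taken of order $1/(k\abs{\ln k}^2)\to\infty$. The correct observation is that both extra factors are at most exponential in $N$ (with rate depending only on $\zeta$, $\alpha$, $\mu$), hence are dominated by $\Gamma(\alpha N)\sim e^{c N\ln N}$; the downstream arguments already estimate $\Gamma(\alpha N)$ by $e^{c_2 N\ln N}$, so an additional $e^{cN}$ is absorbed without change. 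Either state the bound as $C^{N}\Gamma(\alpha N)$ with $C=C(\zeta,\alpha,\mu)$, or note explicitly that the surplus is sub-factorial. (Also, ``doubly exponentially'' for $e^{-c\abs{s}^{1/\alpha}}$ is a slip; it is a stretched exponential.)
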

\begin{proof}
  The statement can be found in~\cite[Eqn 1.8.28]{ksh06} where the dependence of the remainder term
  with on $N$ is not made explicit. To get the explicit estimate on the remainder,
  we follow~\cite[Section 18.1]{emt81}. There, it is proven that the remainder can be written as
  \begin{align*}
    R_{\alpha,\mu}^{N}(z)
    &=\frac{ z^{-N-1}}{2\pi \ii} \int_{\widetilde{\CC}}{\Big(1-\frac{t^{\alpha}}{z}\Big)^{-1} t^{(N+1)\alpha -\mu} e^{t} \; dt},
  \end{align*}
  where $\widetilde{\CC}$ can be taken as two rays $\{r \zeta_0: \, r\geq 1\}$, $\{r \overline{\zeta_0}: \, r\geq 1\}$ and a small circular arc connecting the two
  without crossing the negative real axis. $\zeta_0$ is taken in the left half-plane such that the opening angle of $\widetilde{\CC}$ is sufficiently
  large in order to avoid possible poles of the integrand and ensure that the  term $(1-t^{\alpha}/z)^{-1}$ is uniformly bounded.
  The stated result then follows easily by comparing the integral under consideration to the definition of the Gamma function.
\end{proof}
Setting $N=1$ in Proposition~\ref{prop:decompose_ml} and simple calculation yields the following estimates:
\begin{align}
  \label{eq:mittag_leffler_est}
  \abs{e_{\alpha,\mu}(z)}\leq \frac{C}{1+\abs{z}^{-s}}
  \quad \text{for } \zeta \leq \abs{\operatorname{Arg}{z}} \leq \pi,\, s \in [0,1]
\end{align}

For $\alpha=\mu=1$, the Mittag-Leffler function $e_{1,1}$ is the usual exponential function.
For the decomposition result, we can skip
the terms involving powers $z^{-n}$ in this case as $e^{z}$ already decays faster than any polynomial.

Finally, we need a way of computing antiderivatives of the convolution kernel in
\eqref{eq:parabolic_representation_formula}.
\begin{proposition}
  For $n \in \N_0$, $\alpha>0$, $z \in \C \setminus \{0\}$,  $\lambda \in \C$, it holds that
  \begin{align}
    \label{eq:diff_of_ml}
    z^{\alpha-1}e_{\alpha,\alpha}(\lambda \, z^{\alpha} )&=
    \Big(\frac{\partial}{\partial z}\Big)^{n} \Big(z^{\alpha+n-1} e_{\alpha,\alpha+n}(\lambda \,z^{\alpha})  \Big).
  \end{align}
\end{proposition}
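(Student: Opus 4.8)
The identity is at bottom a statement about the coefficients of an everywhere-convergent power series, and the plan is to prove it by differentiating the Mittag-Leffler series term by term. Throughout I would fix a holomorphic branch of the power function $z\mapsto z^{\nu}$ on $\C\setminus\{0\}$ (say on the slit plane, or on any simply connected subdomain); all fractional powers appearing are then unambiguous and the identity holds branch by branch. When $\alpha\in\N$ no such choice is needed.

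The key elementary ingredient I would record first is the case $n=1$, in the slightly more general form
\[
  \frac{\partial}{\partial z}\Big(z^{\nu} e_{\alpha,\nu+1}(\lambda z^{\alpha})\Big)
  = z^{\nu-1} e_{\alpha,\nu}(\lambda z^{\alpha}) \qquad \text{for every } \nu .
\]
To see this, expand $z^{\nu} e_{\alpha,\nu+1}(\lambda z^{\alpha}) = \sum_{m\ge0} \lambda^m z^{\alpha m + \nu}/\Gamma(\alpha m + \nu + 1)$, differentiate each summand to obtain $(\alpha m + \nu)\lambda^m z^{\alpha m + \nu - 1}/\Gamma(\alpha m + \nu + 1) = \lambda^m z^{\alpha m + \nu - 1}/\Gamma(\alpha m + \nu)$ by the functional equation $\Gamma(x+1)=x\Gamma(x)$, and re-sum. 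Term-by-term differentiation is legitimate because the Mittag-Leffler series is entire, hence converges uniformly on compact sets, so the series defining $z^{\nu}e_{\alpha,\nu+1}(\lambda z^{\alpha})$ and its formal derivative converge locally uniformly on $\C\setminus\{0\}$, and the standard theorem on differentiating series of holomorphic functions applies.

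Given this, the proposition follows by iterating: applying the $n=1$ identity successively with $\nu = \alpha+n-1,\ \alpha+n-2,\ \dots,\ \alpha$ collapses $\Big(\frac{\partial}{\partial z}\Big)^n\big(z^{\alpha+n-1}e_{\alpha,\alpha+n}(\lambda z^\alpha)\big)$ down to $z^{\alpha-1}e_{\alpha,\alpha}(\lambda z^\alpha)$. Alternatively I could do the computation in one shot: writing $z^{\alpha+n-1}e_{\alpha,\alpha+n}(\lambda z^\alpha) = \sum_{m\ge0} \lambda^m z^{\alpha(m+1)+n-1}/\Gamma(\alpha(m+1)+n)$ and using $\Big(\frac{\partial}{\partial z}\Big)^n z^{\alpha(m+1)+n-1} = \big[\Gamma(\alpha(m+1)+n)/\Gamma(\alpha(m+1))\big] z^{\alpha(m+1)-1}$, the Gamma factors cancel and the series becomes $z^{\alpha-1}\sum_{m\ge0}(\lambda z^\alpha)^m/\Gamma(\alpha m + \alpha) = z^{\alpha-1}e_{\alpha,\alpha}(\lambda z^\alpha)$.

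There is no genuinely hard step here; this is a routine power-series identity. The only points deserving a sentence of care are the choice of branch for the fractional powers, so that $\frac{\partial}{\partial z}z^{\nu} = \nu z^{\nu-1}$ is well defined, and the justification of term-by-term differentiation, which as noted is immediate from local uniform convergence of the Mittag-Leffler series.
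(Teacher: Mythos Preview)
Your argument is correct: the direct term-by-term differentiation of the Mittag-Leffler series, using $\Gamma(x+1)=x\Gamma(x)$ to collapse the coefficients, is exactly the standard way to establish this identity, and your care about branches and local uniform convergence is appropriate. The paper does not actually give a proof here but simply cites \cite[Eqn.~1.10.7]{ksh06} with the parameter choice $\beta:=\alpha+n$; your computation is essentially what underlies that cited formula, so there is no substantive difference in approach.
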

\begin{proof}
  Follows from~\cite[Eqn.~1.10.7]{ksh06} by taking $\beta:=\alpha+n$.
\end{proof}

\subsection{Double exponential quadrature for the parabolic problem}

\paragraph{The case of finite regularity}
In this section, we investigate the convergence of our method in the case that $u_0$ and
$f$ have finite $\mathbb{H}^{2\rho}$-regularity for some $\rho \geq 0$. It will showcase
most of the new ingredients needed to go from the elliptic case to the time dependent one while keeping
the technicalities to a minimum.  The step towards Gevrey-regularity will then mainly consist of carefully retracing the
argument and fine-tuning parameters.

\begin{lemma}
  \label{lemma:parabolic_full_discretization_hom_finte_regularity}
  Assume that either $\alpha+\beta<2$ or $\sigma=1$ (i.e., the
  case $\alpha=\beta=1$ and $\sigma=1/2$ is not allowed).  
  Let $u(t):=e_{\alpha,\mu}(-t^{\alpha} \LL^{\beta}) u_0$ 
  and assume $u_0 \in \mathbb{H}^{2\rho}(\Omega)$ for some $\rho >0$.
  Let $u_k:=Q^{\LL}\big(e_{\alpha,\mu}(-t^{\alpha} z^{\beta}),\Nquad\big) u_0$
  be the corresponding discretization using stepsize $k>0$
  and $\Nquad \in \N$ quadrature points.
    
  Then, the following estimate holds for all $\eta\geq 1$ and $r\in [0,\beta/2]$:
  \begin{multline*}
      \norm{u(t)-u_{k}(t)}_{\mathbb{H}^{2r}(\Omega)} 
      \lesssim t^{-\eta \alpha} \Big(
      e^{
        -\min{\big\{p(\sigma,\theta)\sqrt{\beta+\rho-r}
          ,\sqrt{\eta} \,\gamma_1\big\}}
      \frac{1}{\sqrt{k}}}
    +  e^{-\frac{\gamma}{k}}\Big) \norm{u_0}_{\mathbb{H}^{2\rho}} \\
    + t^{-\alpha/2} \exp(-\gamma e^{k \Nquad})  \norm{u_0}_{\mathbb{H}^{\max(2\rho,\frac{1}{4})}(\Omega)}.
    \end{multline*}
    Here $\gamma_1$ is the constant from Corollary~\ref{cor:sinc_in_dexp_for_quadrature_operators}.
    The implied constant and $\gamma$ may depend on $r$, the smallest eigenvalue $\lambda_0$ of $\LL$, $\beta$,$\alpha$, $\kappa$, $\theta$, $\sigma$ and $\rho$.
\end{lemma}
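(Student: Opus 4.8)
The plan is to reduce the operator estimate to a family of scalar quadrature estimates via the eigendecomposition, exactly as in the proof of Corollary~\ref{cor:sinc_in_dexp_for_quadrature_operators} and Corollary~\ref{cor:elliptic_full_discretization}, and then analyze the scalar integrand $g_\lambda^{ml}(y):=e_{\alpha,\mu}(-t^\alpha(\psits(y))^\beta)(\psits(y)-\lambda)^{-1}\psits'(y)$. First I would write $\norm{E^{\LL}(e_{\alpha,\mu}(-t^\alpha z^\beta),\Nquad)u_0}_{\mathbb{H}^{2r}}^2 = \frac{1}{4\pi^2}\sum_j |(1+\lambda_j^r) E^{\lambda_j}(e_{\alpha,\mu}(-t^\alpha z^\beta),\Nquad)(u_0,v_j)|^2$ and bound it by $\sup_{\lambda\ge\lambda_0}|(1+\lambda^{r-\rho})E^\lambda|^2 \,\norm{u_0}_{\mathbb{H}^{2\rho}}^2$, so everything comes down to a $\lambda$-robust scalar estimate with an extra allowance of $\lambda^\rho$.

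For the scalar estimate the key device is Proposition~\ref{prop:decompose_ml}: split $e_{\alpha,\mu}(w) = -\sum_{n=1}^{N}\frac{1}{\Gamma(\mu-\alpha n)}w^{-n} + R_{\alpha,\mu}^N(w)$ with $w=-t^\alpha(\psits(y))^\beta$. The polynomial terms $w^{-n}$ give, after pulling out the $t$-powers, exactly the functions $g_\lambda^{\alpha n/ ?}$-type integrands already controlled by Theorem~\ref{thm:de_for_elliptic} / Corollary~\ref{cor:de_for_elliptic:lambda_robust} applied with exponent $\beta n$ in place of $\beta$; since $\beta n \ge \beta = \overline\beta$ this is admissible and each contributes a $t^{-\alpha n}$ factor and an $e^{-p(\sigma,\theta)\sqrt{\beta n + \rho - r}/\sqrt k}$ rate. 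The remainder $R_{\alpha,\mu}^N$ satisfies $|R_{\alpha,\mu}^N(w)|\lesssim \Gamma(\alpha N)|w|^{-(N+1)}$, so $r_\lambda^N(y):=R_{\alpha,\mu}^N(-t^\alpha(\psits(y))^\beta)(\psits(y)-\lambda)^{-1}\psits'(y)$ decays like $t^{-\alpha(N+1)}|\psits(y)|^{-\beta(N+1)-1}|\psits'(y)| \Gamma(\alpha N)$, which by the double-exponential growth of $\psits$ (Lemma~\ref{lemma:psi_growth}) is doubly-exponentially decreasing; here I must invoke Lemma~\ref{lemma:psi:properties:right_halfplane} to check that $-t^\alpha(\psits(y))^\beta$ lands in the sector $\zeta\le|\operatorname{Arg}(\cdot)|\le\pi$ where the decomposition is valid — this is where the hypothesis "$\alpha+\beta<2$ or $\sigma=1$" enters, since with $\sigma=1/2$ the map $\psits$ widens the sector by roughly $\beta\alpha\pi/2 < \alpha\pi$, whereas $\sigma=1$ keeps things in the right half-plane which suffices once $\alpha<2$. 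Applying Lemma~\ref{lemma:sinc_in_dexp} (with the meromorphic-poles extension mentioned in the remark after it, to absorb the single pole at $\psits(y)=\lambda$ as in Theorem~\ref{thm:de_for_elliptic}) to $r_\lambda^N$ gives a quadrature error $\lesssim \Gamma(\alpha N) t^{-\alpha N}\exp(-\gamma_1\sqrt{N-2\varepsilon}/\sqrt k)$, the $\Gamma(\alpha N)\sim e^{\alpha N\ln N}$ being harmless because we will take $N$ proportional to $1/k$ only in the Gevrey case — here $N$ is a fixed integer.

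Then I would choose $N$ as the smallest integer with $\beta N + \rho - r \ge \eta$ (equivalently $N\sim\lceil(\eta - \rho + r)/\beta\rceil$, which combines the polynomial-term rate $p(\sigma,\theta)\sqrt{\beta+\rho-r}$ from the $n=1$ term with the remainder rate $\gamma_1\sqrt\eta$ into the stated $\min$), collect the $t^{-\alpha n}$ and $t^{-\alpha N}$ factors into a single $t^{-\eta\alpha}$ (using $t\le T$ so smaller powers are dominated), and handle the cutoff error $E_c = k\sum_{|j|>\Nquad}|g_\lambda^{ml}(jk)|$ by the same integral-comparison argument as in Corollary~\ref{cor:sinc_in_dexp_for_quadrature_operators}: the tail decays doubly-exponentially, giving $\exp(-\gamma e^{k\Nquad})$, where the $t^{-\alpha/2}$ prefactor and the $\mathbb{H}^{\max(2\rho,1/4)}$ norm come from bounding $|e_{\alpha,\mu}(-t^\alpha(\psits(jk))^\beta)| \lesssim (1+t^{\alpha/2}|\psits(jk)|^{\beta/2})^{-1}$ via \eqref{eq:mittag_leffler_est} with $s=1/2$ and trading $|\psits|^{\beta/2}$ against a factor $\lambda^{1/4}$. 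The main obstacle I expect is the bookkeeping: verifying that the sector/half-plane image condition of Lemma~\ref{lemma:psi:properties:right_halfplane} is met uniformly (the source of the $\alpha+\beta<2$ vs. $\sigma=1$ dichotomy), and carefully tracking the $t$-dependence so that all intermediate $t^{-\alpha n}$ singularities coalesce into the single clean $t^{-\eta\alpha}$ of the statement; the rate combination and the $\Gamma$-factor control are, by contrast, routine once $N$ is fixed.
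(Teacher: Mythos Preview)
Your approach is essentially the paper's: decompose $e_{\alpha,\mu}$ via Proposition~\ref{prop:decompose_ml}, apply the elliptic estimate (Corollary~\ref{cor:elliptic_full_discretization}) to each power $z^{-\beta n}$, and handle the remainder $R^N_{\alpha,\mu}$ via Corollary~\ref{cor:sinc_in_dexp_for_quadrature_operators}. The sector/half-plane check and the cutoff-error reasoning are also in line with the paper.

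Two points need correction. First, your choice of $N$ is off. You take $N$ minimal with $\beta N+\rho-r\ge\eta$, but the remainder contributes a rate $\gamma_1\sqrt{N}$, not $\gamma_1\sqrt{\eta}$; with your $N\sim(\eta-\rho+r)/\beta$ this does not produce the claimed $\min\{p(\sigma,\theta)\sqrt{\beta+\rho-r},\,\gamma_1\sqrt{\eta}\}$. The paper simply sets $N=\eta$ when $\eta\in\N$ (and interpolates between $\lfloor\eta\rfloor$ and $\lceil\eta\rceil$ otherwise): then the remainder gives $t^{-\alpha\eta}e^{-\gamma_1\sqrt{\eta}/\sqrt{k}}$ directly, while the finite sum has worst rate at $n=1$ and worst $t$-factor at $n=N-1\le\eta$, yielding exactly the stated minimum. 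Second, the meromorphic-pole extension of Lemma~\ref{lemma:sinc_in_dexp} is unnecessary for the remainder: on $\Dexp$, Lemma~\ref{lemma:psi:properties:right_halfplane} guarantees $|\psits(y)-\lambda|\ge c_1>0$, so $r_\lambda^N$ is genuinely holomorphic there and Corollary~\ref{cor:sinc_in_dexp_for_quadrature_operators} (or Lemma~\ref{lemma:sinc_in_dexp} applied scalarwise) applies as stated.
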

\begin{proof}
  We start with $\Nquad=\infty$ and
  split the Mittag-Leffler function according to \eqref{eq:mittag_leffler_decomp}. We write
  \begin{align}
    \label{eq:decomp_of_ml_quadrature_error}
    E^{\LL}\Big(e_{\alpha,\mu}(-t^{\alpha} z^{\beta}) \Big)
    &= \sum_{n=1}^{N}{\frac{ (-1)^n \,t^{-\alpha n} }{\Gamma(\mu-\alpha n)} E^{\LL}(z^{-\beta n})}
    + E^{\LL}\big(R^N_{\alpha,\mu}(-t^{\alpha} z^{\beta})\big).
  \end{align}
  For the first terms, we apply Lemma~\ref{cor:elliptic_full_discretization},
  and for the final term we  use the decay estimate~\eqref{eq:mittag_leffler_decomp}
  and Corollary~\ref{cor:sinc_in_dexp_for_quadrature_operators}.
  Note that this is where we have to exclude the case $\alpha=\beta=1$ and $\sigma=1/2$.
  If $\alpha<1$ the Mittag-Leffler function is contractive on a large enough sector.
  If $\beta<1$, the map $z\mapsto z^{\beta}$ maps the required sector into the right half plane.
  Otherwise, the exponential function only decays in the right half-plane, not any slightly bigger sector.
  Thus, if $\sigma=1/2$, Corollary~\ref{cor:sinc_in_dexp_for_quadrature_operators} does not apply.

  Overall, we get the estimate:
  \begin{align*}
    \norm{E^{\LL}\Big(e_{\alpha,\mu}(t^{\alpha} z^{\beta}) \Big)}_{\mathbb{H}^{2 r}(\Omega)}
    &\lesssim
      \sum_{n=1}^{N-1}{\frac{ t^{-\alpha n} }{\Gamma(\mu-\alpha n)}
      \exp\Big(-
      \min\Big\{\frac{p(\sigma,\theta) \sqrt{\beta n + \rho -r}}{\sqrt{k}},
      \frac{\gamma}{k}\Big\}
      \Big)\norm{u_0}_{\mathbb{H}^{2\rho}(\Omega)}} \\
    &+ \Gamma(\alpha N) t^{-\alpha N} \exp\Big(-\gamma_1 \frac{\sqrt{N}}{\sqrt{k}}\Big)
      \norm{u_0}_{\mathbb{H}^{\min(\rho,1)}(\Omega)}.
  \end{align*}
  If $\eta$ is an integer, we can pick $N=\eta$ to get the statement for 
  $\Nquad=\infty$.   For general $\eta\geq 1$, we can interpolate between $\lfloor\eta\rfloor$
  and $\lceil \eta \rceil$.
  The treatment of the cutoff error follows  as in
  Corollary~\ref{cor:sinc_in_dexp_for_quadrature_operators},
  exploiting that $e_{\alpha,\mu}(z)$ decays like~\eqref{eq:mittag_leffler_est} with $s:=\beta/2$.
\end{proof}
Picking $\eta$ large enough,
Lemma~\ref{lemma:parabolic_full_discretization_hom_finte_regularity} shows that
for fixed times $t>0$ we get the same convergence rate as for the elliptic problem, though the error
deteriorates as $t$ gets small.

Now that we understand the homogeneous problem,
we can look at the case of allowing inhomogeneous right-hand sides $f$ by using the
representation formula~\eqref{eq:parabolic_representation_formula}. We point out that naive application
of Corollary~\eqref{eq:cor:de_for_elliptic_int1} also inside the time-convolution integral
would fail to give good rates, as the error may blow up faster than $\tau^{-\alpha}$ for small times,
leading to a non-integrable function. Instead, we have the following Theorem:
\begin{theorem}
  \label{lemma:parabolic_inhomogeneous_finite_regularity}
  Assume that either $\alpha+\beta<2$ or $\sigma=1$ (i.e., the
    case $\alpha=\beta=1$ and $\sigma=1/2$ is not allowed).  
    Let $u$ be the solution to~\eqref{eq:parabolic_model_problem}.   
    Assume $u_0 \in \mathbb{H}^{2\rho}(\Omega)$ for some $\rho > 0$,
    and $f \in C^{m}([0,T],  \mathbb{H}^{2\rho}(\Omega))$ for some
    $m\in \N$.
    Let $u_k$ be the corresponding discretization using stepsize $k>0$
    and $\Nquad \in \N$ quadrature points as defined in~\eqref{eq:2}.
  
    Then, the following estimate holds for all
     $t \in (0,T)$, $r\in [0,\beta/2]$  and any $q<1$:
    \begin{multline*}
      \norm{u(t)-u_{k}(t)}_{\mathbb{H}^{2r}(\Omega)}   
      \lesssim 
      \Big( t^{-m+\alpha(1-q)} e^{-\frac{\min\big\{p(\sigma,\theta) \sqrt{\beta+\rho-r},\gamma_1\sqrt{m/\alpha +q}\big\}}{\sqrt{k}}}
      + t^{-m}e^{-\gamma/k}
      + t^{-\alpha/2} e^{-\gamma e^{k \Nquad}}\Big) \\
      \times \Big(\norm{u_0}_{\mathbb{H}^{2\rho}(\Omega)} +
        \sum_{j=0}^{m}{\max_{\tau \leq t}\|{f^{(j)}(\tau)}\|_{\mathbb{H}^{2\rho}(\Omega)}}\Big)  
    \end{multline*}
    $\gamma_1$ is the constant from Corollary~\ref{cor:sinc_in_dexp_for_quadrature_operators}.
    The impied constant and $\gamma$  may depend on $q$, $r$, $T$, the smallest eigenvalue $\lambda_0$ of $\LL$, $\beta$, $\alpha$, $\kappa$, $\theta$, $\sigma$ and $\rho$.    
\end{theorem}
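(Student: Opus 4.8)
The plan is to reduce everything to the homogeneous estimate of Lemma~\ref{lemma:parabolic_full_discretization_hom_finte_regularity}. Since the quadrature operator acts only on the symbol, it commutes with the time convolution in~\eqref{eq:2}, so the error splits as
\[
  u(t)-u_k(t) = E^{\LL}\big(e_{\alpha,1}(-t^{\alpha}z^{\beta}),\Nquad\big)u_0
  + \int_0^t \tau^{\alpha-1}\,E^{\LL}\big(e_{\alpha,\alpha}(-\tau^{\alpha}z^{\beta}),\Nquad\big)f(t-\tau)\,d\tau .
\]
The first summand is bounded directly by Lemma~\ref{lemma:parabolic_full_discretization_hom_finte_regularity} (with $\mu=1$). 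As noted before the statement, the convolution term is the crux: inserting the pointwise-in-$\tau$ bound of Lemma~\ref{lemma:parabolic_full_discretization_hom_finte_regularity} produces an integrand of size $\tau^{\alpha-1-\eta\alpha}$ near $\tau=0$, which is not integrable once the free parameter $\eta$ is chosen large enough to make the exponential rate useful.

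To cure this I would integrate by parts $m$ times in the convolution, moving the $\tau$-derivatives onto $f$. The tool is the antiderivative identity~\eqref{eq:diff_of_ml}: setting $K_j(\tau):=\tau^{\alpha+j-1}e_{\alpha,\alpha+j}(-\tau^{\alpha}\LL^{\beta})$ one checks termwise in the power series (differentiating in $\tau$) that $K_j'=K_{j-1}$, that $K_0$ is the convolution kernel $\tau^{\alpha-1}e_{\alpha,\alpha}(-\tau^{\alpha}\LL^{\beta})$, and that $K_j(0)=0$ for $j\geq 1$ since its lowest power is $\tau^{\alpha+j-1}$. Crucially, the very same relations hold for the discrete kernels $K_j^k(\tau):=\tau^{\alpha+j-1}Q^{\LL}\big(e_{\alpha,\alpha+j}(-\tau^{\alpha}z^{\beta}),\Nquad\big)$, because $Q^{\LL}$ is a (for $\Nquad<\infty$ finite) linear combination of resolvents and~\eqref{eq:diff_of_ml} may be applied inside the sum, with the parameter $\lambda$ there played by $-(\psits(\ell k))^{\beta}$. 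Integrating by parts $m$ times in both the exact and the discrete convolution, and using $f\in C^{m}$ together with $K_j(0)=K_j^k(0)=0$, yields
\[
  \int_0^t K_0(\tau)f(t-\tau)\,d\tau
  = \sum_{j=1}^{m}K_j(t)f^{(j-1)}(0) + \int_0^t K_m(\tau)f^{(m)}(t-\tau)\,d\tau ,
\]
and likewise with $K_j^k$; subtracting, the convolution part of the error becomes $\sum_{j=1}^{m} t^{\alpha+j-1}E^{\LL}\big(e_{\alpha,\alpha+j}(-t^{\alpha}z^{\beta}),\Nquad\big)f^{(j-1)}(0)$ plus $\int_0^t \tau^{\alpha+m-1}E^{\LL}\big(e_{\alpha,\alpha+m}(-\tau^{\alpha}z^{\beta}),\Nquad\big)f^{(m)}(t-\tau)\,d\tau$.

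Every summand is now of the homogeneous type, so I would apply Lemma~\ref{lemma:parabolic_full_discretization_hom_finte_regularity} with $\mu=\alpha+j$ (resp.\ $\mu=\alpha+m$), choosing the free parameter $\eta$ essentially equal to $m/\alpha+q$. Term by term the precise value of $\eta$ is pinned down by two competing demands: the $\tau$-weight must stay integrable near $\tau=0$, and the resulting time power must not exceed $t^{-m+\alpha(1-q)}$; the binding case is the $j=1$ boundary term (and the final integral), which is exactly where the rate $\gamma_1\sqrt{m/\alpha+q}$ and the weight $t^{-m+\alpha(1-q)}$ emerge. For the final integral, the prefactor $\tau^{\alpha+m-1}$ multiplied by the $t^{-\eta\alpha}$-type bound of the lemma gives an integrand of size $\tau^{\alpha(1-q)-1}$, integrable on $(0,t)$ precisely because $q<1$ (the constant blowing up as $q\to1$), and the $\tau$-integration contributes the extra factor $t^{\alpha(1-q)}$ while preserving the exponential rate. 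The cutoff error is treated exactly as in Lemma~\ref{lemma:parabolic_full_discretization_hom_finte_regularity}, using the algebraic Mittag-Leffler decay~\eqref{eq:mittag_leffler_est} (with $s=\beta/2$), and produces the $t^{-\alpha/2}\exp(-\gamma e^{k\Nquad})$ contribution. Collecting the finitely many terms, estimating $\|f^{(j-1)}(0)\|_{\mathbb{H}^{2\rho}(\Omega)}\leq\max_{\tau\leq t}\|f^{(j-1)}(\tau)\|_{\mathbb{H}^{2\rho}(\Omega)}$ (and likewise for $f^{(m)}(t-\tau)$), and using $t\leq T$ to absorb the milder time powers into $t^{-m+\alpha(1-q)}$ and $t^{-m}$ completes the proof.

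I expect the main obstacle to be exactly the integration-by-parts step with its bookkeeping: confirming that the antiderivative identity~\eqref{eq:diff_of_ml} transfers verbatim to the discrete operator $Q^{\LL}$, that the boundary contributions at $\tau=0$ vanish, and then balancing the parameters $\eta$ across the $m+1$ resulting terms so that integrability near $\tau=0$ is achieved without the exponential rate dropping below $\min\{p(\sigma,\theta)\sqrt{\beta+\rho-r},\gamma_1\sqrt{m/\alpha+q}\}$. The exclusion of the case $\alpha=\beta=1$, $\sigma=1/2$ needs no separate argument --- it is inherited from the hypotheses of Lemma~\ref{lemma:parabolic_full_discretization_hom_finte_regularity}.
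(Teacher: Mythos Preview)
Your proposal is correct and follows essentially the same route as the paper: split off the homogeneous part, integrate by parts $m$ times in the convolution via~\eqref{eq:diff_of_ml} (at both the continuous and discrete level), and then apply Lemma~\ref{lemma:parabolic_full_discretization_hom_finte_regularity} with $\eta=m/\alpha+q$ to each resulting term, the $j=1$ boundary contribution fixing the worst time power $t^{-m+\alpha(1-q)}$ and the remaining integral being integrable since $q<1$. Your added justification that the antiderivative identity transfers to $Q^{\LL}$ (as a finite linear combination of resolvents) is a useful piece of bookkeeping that the paper handles more tersely.
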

\begin{proof}
  As we have already estimated the error of the homogeneous part,
  we only consider the part corresponding to the inhomogenity, i.e., for now
  let $u_0=0$.
  We integrate by parts $m$ times, using~\eqref{eq:diff_of_ml}:
  \begin{multline*}
    \int_{0}^{t} { \tau^{\alpha-1} e_{\alpha,\alpha}\big(-\tau^{\alpha} \lambda^\beta\big) f(t-\tau)\,d\tau}
    =
    {\sum_{j=1}^{m}
      { t^{\alpha+j-1} e_{\alpha,\alpha+j}(- t^{\alpha} \lambda^{\beta} ) f^{(j-1)}(0)}}\\
    +{\int_{0}^{t} { \tau^{\alpha+m-1} e_{\alpha,\alpha+m}\big(-\tau^{\alpha} \lambda^\beta\big) f^{(m)}(t-\tau)\,d\tau}} 
  \end{multline*}

  Transferring this identity to the operator-valued setting,
  this means that we can analyze the quadrature error for these terms separately.
  \begin{align}
    \norm{u(t)-u_k(t)}_{\mathbb{H}^{2r}(\Omega)}
    &=\Big\|\int_{0}^{t} {
      \tau^{\alpha-1}E^\LL\Big(e_{\alpha,\alpha}\big(-\tau^{\alpha} z^\beta\big)\Big) f(t-\tau)\,d\tau}
      \Big\|_{\mathbb{H}^{2r}(\Omega)}
    \nonumber\\ 
    &\leq
      \begin{multlined}[t][10.5cm]
      {\sum_{j=1}^{m}
      { t^{\alpha+j-1} \norm{ E^{\LL}\Big(e_{\alpha,\alpha+j}(- t^{\alpha}z^{\beta} ) \Big)f^{(j-1)}(0)}_{\mathbb{H}^{2r}(\Omega)}}}
    \\+       
      {\int_{0}^{t} { \tau^{\alpha+m-1}
          \norm{E^{\LL}\big(e_{\alpha,\alpha+m}\big(-\tau^{\alpha} z^\beta\big)\big) f^{(m)}(t-\tau)}_{\mathbb{H}^{2r}(\Omega)}\,d\tau}}.
    \end{multlined}
    \label{eq:parabolic_inh_decomposition}    
  \end{align}
All the terms appearing are of the structure in Lemma~\ref{lemma:parabolic_full_discretization_hom_finte_regularity}.
  Most notably, the first $m$ terms are evaluated at a fixed $t>0$ thus we don't have to analyze them further
  and can just accept some $t$-dependence.

  Investigating the remaining integral, we get
  by using $\eta:=m/\alpha+q$ in  Lemma~\ref{lemma:parabolic_full_discretization_hom_finte_regularity}:
  \begin{multline*}
\int_{0}^{t} { \tau^{\alpha+m-1}
    \norm{E^{\LL}\big(e_{\alpha,\alpha+n}(-\tau^{\alpha} z^\beta)\big) f^{(m)}(t-\tau)}_{\mathbb{H}^{2r}(\Omega)}\,d\tau} \\
  \lesssim
      \int_{0}^{t}{\tau^{\alpha+m-1-m-\alpha q} 
      \Big[\!\exp\Big(-\frac{ \min\{p(\sigma,\theta) \sqrt{\beta+\rho-r},\gamma_1\sqrt{\eta}\}}{\sqrt{k}}\Big)
      \!+\!e^{-\frac{\gamma}{k}} \Big]\big\|{f^{(m)}(t-\tau)}\big\|_{\mathbb{H}^{2\rho}(\Omega)}  d\tau}.
    \end{multline*}
    For $q<1$, this is an integrable function
    (with respect to $\tau$). The dominating $t$-dependence can be found in the
    first term of~\eqref{eq:parabolic_inh_decomposition} and thus the result follows.
    The cutoff error is treated  like before, making use of the decay of $e_{\alpha,\alpha}$.
\end{proof}

\begin{remark}
  Corollary~\ref{lemma:parabolic_inhomogeneous_finite_regularity} shows that, as long as
  we assume that $f$ is smooth enough in time we recover the same convergence rate
  $p(\sigma,\theta)\sqrt{\beta+\rho-r}$ as in the homogeneous
  and elliptic case.
\end{remark}

\paragraph{The case of Gevrey-type regularity}
If the data not only satisfies some finite regularity estimates but instead is even
in some Gevrey-type class of functions, we can again improve the convergence
rate, and almost get rid of the square root in the exponent.
We go back to the homogeneous problem and assume
that $k<1/2$ so that the logarithmic terms can be written down succinctly.
\begin{lemma}
  \label{lemma:parabolic_full_discretization:super_smooth}
  Assume that either $\alpha+\beta<2$ or $\sigma=1$ (i.e., the
  case $\alpha=\beta=1$ and $\sigma=1/2$ is not allowed).  
  Let $u(t):=e_{\alpha,\mu}(-t^{\alpha} \LL^{\beta}) u_0$ 
  and assume
  that  there exist constants $C_{u_0} , \omega, R_{u_0}>0$ such that
  \begin{align*}
    \norm{u_0}_{\mathbb{H}^{\rho}(\Omega)} &\leq C_{u_0} \, R_{u_0}^{\rho} \,\big(\Gamma(\rho+1)\big)^{\omega}  < \infty \qquad  \forall \rho\geq 0.
  \end{align*}
  Let $u_k(t):=Q^{\LL}\big(e_{\alpha,\mu}(-t^{\alpha} z^{\beta}),\Nquad\big) u_0$
  be the  discretization of $u$ using stepsize $k\in(0,1/2)$
  and $\Nquad \in \N$ quadrature points.
  Then, the following estimate holds:
  \begin{align*}
      \norm{u(t)-u_{k}(t)}_{\mathbb{H}^{\beta}(\Omega)}
    &\lesssim C_{u_0}
      \exp\big(-\frac{\gamma}{k\abs{\ln(t_\star)}\abs{\ln{k}}}\big) + C_{u_0}t^{-\alpha/2}\exp\big(- \gamma e^{k \Nquad}\big).
  \end{align*}
  with $t_{\star}:=\min(t,1/2)$.
  The implied constant and $\gamma$ may depend on $\varepsilon$,
  the smallest eigenvalue $\lambda_0$ of $\LL$, $\beta$, $\alpha$, $\kappa$, $\theta$, $\sigma$,  $R_{u_0}$  and $\omega$.
\end{lemma}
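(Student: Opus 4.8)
The plan is to retrace the proof of Corollary~\ref{cor:elliptic_full_discretization:super_smooth}, replacing its starting point Corollary~\ref{cor:elliptic_full_discretization} by the finite-regularity estimate Lemma~\ref{lemma:parabolic_full_discretization_hom_finte_regularity}; the only genuinely new ingredient is the prefactor $t^{-\eta\alpha}$ carried by the latter. As before I would work with $\Nquad=\infty$ and append the cutoff term at the end: it is bounded directly, exactly as in Corollary~\ref{cor:sinc_in_dexp_for_quadrature_operators}, using the decay~\eqref{eq:mittag_leffler_est} of $e_{\alpha,\mu}$, and it only needs a fixed low-order Sobolev norm of $u_0$, which is $\lesssim C_{u_0}$ by the Gevrey hypothesis; this reproduces $C_{u_0}\,t^{-\alpha/2}\exp(-\gamma e^{k\Nquad})$ and does not interact with the $k$-dependent regularity index chosen below. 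Stirling's formula turns the hypothesis into $\norm{u_0}_{\mathbb{H}^{2\rho}(\Omega)}\lesssim C_{u_0}\exp\bigl(2\rho(\omega\ln\rho+c_2)\bigr)$ for all $\rho\geq0$, with $c_2$ depending only on $R_{u_0}$ and $\omega$. I would also use Lemma~\ref{lemma:parabolic_full_discretization_hom_finte_regularity} in the version whose implied constant is independent of $\rho$, i.e.\ built on the $\varepsilon>0$ branch of Corollary~\ref{cor:elliptic_full_discretization}; this merely replaces $p(\sigma,\theta)$ by $p(\sigma,\theta)-\varepsilon$, which is harmless.

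Next I would invoke Lemma~\ref{lemma:parabolic_full_discretization_hom_finte_regularity} with $r=\beta/2$ and, for a parameter $\rho\geq0$ still free, set $\eta:=\max\bigl(1,(p(\sigma,\theta)/\gamma_1)^2(\beta/2+\rho)\bigr)$, so that $\gamma_1\sqrt\eta\geq p(\sigma,\theta)\sqrt{\beta/2+\rho}$ and the minimum appearing there equals $p(\sigma,\theta)\sqrt{\beta/2+\rho}$. Since then $\eta\lesssim1+\rho$, combining with the Stirling bound shows that the dominant contribution to $\norm{u(t)-u_k(t)}_{\mathbb{H}^{\beta}(\Omega)}$ is controlled by
\[
  C_{u_0}\exp\Bigl(C\alpha(1+\rho)\abs{\ln t_\star} + 2\rho(\omega\ln\rho+c_2) - \frac{p(\sigma,\theta)\sqrt{\beta/2+\rho}}{\sqrt k}\Bigr),
\]
plus a term bounded by $t_\star^{-\eta\alpha}e^{-\gamma/k}\norm{u_0}_{\mathbb{H}^{2\rho}(\Omega)}$. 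The regime where $k$ is not small (equivalently $\rho$ bounded) is absorbed by the $e^{-\gamma/k}$ term and the overall constant, so I may assume $k$ small.

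The decisive choice is $\rho:=\delta\big/\bigl(k\,\abs{\ln k}^2\,\abs{\ln t_\star}^2\bigr)$ with $\delta>0$ sufficiently small (depending only on $\omega,\alpha,R_{u_0},\gamma_1,p(\sigma,\theta)$). Then the decay term satisfies $p(\sigma,\theta)\sqrt{\beta/2+\rho}/\sqrt k\gtrsim\sqrt\delta\big/\bigl(k\abs{\ln k}\abs{\ln t_\star}\bigr)$, which is precisely the claimed rate; on the other hand, using the elementary bound $\abs{\ln\rho}\lesssim\abs{\ln k}+\abs{\ln t_\star}$ (valid since $k,t_\star\leq1/2$), the ``bad'' contributions $(1+\rho)\abs{\ln t_\star}$ and $\rho\ln\rho$ to the exponent are each at most a constant multiple of $\sqrt\delta$ times this decay term, hence absorbed into it once $\delta$ is small. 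The leftover term $t_\star^{-\eta\alpha}e^{-\gamma/k}\norm{u_0}_{\mathbb{H}^{2\rho}(\Omega)}$ has exponent bounded by a constant times $\rho(\abs{\ln t_\star}+\ln\rho)$, which is of smaller order than $1/k$; hence it is $\lesssim e^{-\gamma'/k}\lesssim e^{-\gamma''/(k\abs{\ln k}\abs{\ln t_\star})}$ because $\abs{\ln k}\abs{\ln t_\star}\geq(\ln2)^2$. Reinstating the cutoff term gives the estimate. When $\omega=0$ the $\ln\rho$ term is absent, so $\rho\sim\bigl(k\abs{\ln t_\star}^2\bigr)^{-1}$ already suffices and one obtains the improved rate $\gamma/\bigl(k\abs{\ln t_\star}\bigr)$, i.e.\ one logarithm is removed.

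The main obstacle is the tension between the two roles of $\rho$: the prefactor $t_\star^{-\eta\alpha}$ (with $\eta\sim\rho$) and the Stirling growth $\exp(2\omega\rho\ln\rho)$ penalize large $\rho$, whereas the quadrature decay $\exp(-p(\sigma,\theta)\sqrt\rho/\sqrt k)$ rewards it. One has to pin down the scaling of $\rho$ jointly in $k$ and $t_\star$ that places \emph{every} error contribution at or below the target rate $\gamma/(k\abs{\ln t_\star}\abs{\ln k})$ uniformly, carefully tracking the subleading logarithmic terms hidden in $\ln\rho$; the rest is a routine adaptation of the elliptic Gevrey argument.
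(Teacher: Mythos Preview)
Your approach is correct but takes a different route from the paper. The paper does not invoke Lemma~\ref{lemma:parabolic_full_discretization_hom_finte_regularity} at all; instead it returns directly to the Mittag--Leffler decomposition~\eqref{eq:decomp_of_ml_quadrature_error}, applies the \emph{elliptic} Gevrey result Corollary~\ref{cor:elliptic_full_discretization:super_smooth} to each of the first $N$ terms (each already yielding the rate $e^{-\gamma/(k\abs{\ln k})}$), bounds the remainder by $\Gamma(\alpha N)\,t^{-\alpha N}e^{-\gamma\sqrt{N}/\sqrt k}$, and then tunes the single free parameter $N\sim\delta/(k\abs{\ln t_\star}^2\abs{\ln k}^2)$. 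You instead treat Lemma~\ref{lemma:parabolic_full_discretization_hom_finte_regularity} as the black box and optimise over $\rho$ (with $\eta$ tied to $\rho$), which is precisely the mechanism by which Corollary~\ref{cor:elliptic_full_discretization:super_smooth} was derived from Corollary~\ref{cor:elliptic_full_discretization}; in effect you interchange the order of ``optimise over $\rho$'' and ``sum over the Mittag--Leffler pieces''. The paper's route is slightly cleaner because the $\rho$-optimisation is already packaged inside Corollary~\ref{cor:elliptic_full_discretization:super_smooth}, leaving only $N$ to balance against the $t^{-\alpha N}$ and $\Gamma(\alpha N)$ factors; your route is more uniform with the elliptic argument but needs one caveat you should make explicit: the implied constant in Lemma~\ref{lemma:parabolic_full_discretization_hom_finte_regularity} is \emph{not} uniform in $\eta$ (even though $\eta$ is not listed among the dependencies)---its proof hides factors $1/\Gamma(\mu-\alpha n)$ and $\Gamma(\alpha N)$ that together contribute a growth of order $e^{c\eta\ln\eta}$. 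You therefore cannot use that lemma purely as stated with $\eta\to\infty$. Fortunately this extra growth is of the same $\rho\ln\rho$ shape you already absorb via the choice $\rho=\delta/(k\abs{\ln k}^2\abs{\ln t_\star}^2)$, so once you acknowledge and track it your argument goes through unchanged.
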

\begin{proof} 
  We go back to~\eqref{eq:decomp_of_ml_quadrature_error}, but apply Corollary~\ref{cor:elliptic_full_discretization:super_smooth} to each of the first $N$ terms, getting:
  \begin{equation}
        \label{eq:decomp_of_ml_hom_smooth}
        \begin{multlined}[c][0.9\textwidth]
          \norm{E^{\LL}\Big(e_{\alpha,\mu}(-t^{\alpha} z^{\beta}) \Big) u_0}_{\mathbb{H}^{\beta}(\Omega)}
          \lesssim
          C_{u_0}\sum_{n=1}^{N}{\frac{ t^{-\alpha n} }{\Gamma(\mu-\alpha n)}
            \exp\Big(-\frac{\gamma}{k \abs{\ln(k)}}\Big)
      } \\
      + \Gamma(\alpha N) t^{-\alpha N} \exp\Big(-\gamma \frac{\sqrt{N-2\varepsilon}}{\sqrt{k}}\Big)
    \norm{u_0}_{L^2(\Omega)}.
  \end{multlined}
\end{equation}
We  estimate the first terms by
  \begin{align*}
    \frac{ t^{-\alpha n} }{\Gamma(\mu-\alpha n)}
    \exp\Big(-\frac{\gamma}{k \abs{\ln(k)}}\Big)
    &\lesssim
      \exp\Bigg(-\alpha \ln(t) n + c_1 n \log(n)  -\frac{\gamma}{k \abs{\ln(k)}}\Bigg).
  \end{align*}
  For $n \leq \frac{\delta}{k\abs{\ln(t_{\star})}^2\abs{\ln(k)}^2}$, we can estimate the exponent by
  \begin{align*}
    -\frac{1}{k \abs{\ln(t_\star)}\abs{\ln(k)}}
    \Bigg[
      {-\frac{\alpha \delta }{\abs{\ln(k)}} }
         - \frac{  c_1 \delta }{\abs{\ln(t_\star)}\abs{\ln(k)}} \ln\Big(\frac{\delta}{\ln(t_\star)^2 \ln(k)^2 k}\Big)
      +\gamma \ln(2) \Bigg]
  \end{align*}
  For $\delta $ small enough, depending on $c_1$, $\alpha$ and $\gamma$,
  the term in brackets is uniformly positive (i.e., independently of $t$ and $k$), we can thus estimate
  for some $\gamma_1>0$:
  \begin{align*}
      \frac{ t^{-\alpha n} }{\Gamma(\mu-\alpha n)}
    \exp\Big(-\frac{\gamma}{k \abs{\ln(k)}}\Big)
    &\lesssim e^{-\frac{\gamma_1}{k\abs{\ln(t_\star)} \abs{\ln(k)}}}.
  \end{align*}
  The remainder term  behaves like
  \begin{align*}
    t^{-\alpha N} \Gamma(\alpha N) \exp\Big(-\gamma \frac{\sqrt{N-2\varepsilon}}{\sqrt{k}}\Big)
    &\lesssim
      \exp\Big(-\alpha N \ln(t) - c_2 N \ln(N) - \gamma \frac{\sqrt{N-2\varepsilon}}{\sqrt{k}}\Big).
  \end{align*}
  By picking $N=\big\lceil{\frac{\delta}{k\abs{\ln(t_\star)}^2\abs{\ln(k)}^2}}\big\rceil$, 
  the exponent be bounded up to a constant by 
  \begin{align*}
      -\frac{\sqrt{\delta}}{k \abs{\ln(t_\star)} \abs{\ln(k)}}
      \Bigg[
      {-\frac{\gamma \sqrt{\delta}}{\abs{\ln(k)}}
    -  \frac{c_1\sqrt{\delta}}{\abs{\ln(t_\star)\ln(k)}} \ln\Big(\frac{\delta}{k \abs{\ln(t_\star)}^2 \abs{\ln(k)}^2}\Big)
    + \gamma \ln(2)
      }
      \Bigg].
  \end{align*}
  By taking the factor $\delta$ sufficiently small, we get that the term in brackets stays uniformly positive,
  which shows
  \begin{align*}
    \norm{E^{\LL}\Big(e_{\alpha,\mu}(-t^{\alpha} z^{\beta}) \Big)}_{\mathbb{H}^{\beta}(\Omega)}
    &\lesssim
      \exp\Big({-\frac{\gamma}{\abs{\ln(t_\star)}\abs{\ln(k)}k}}\Big).
  \end{align*}
  The cutoff error can easily be dealt with as in the previous results, as the Mittag-Leffler function
  satisfies the decay bound \eqref{eq:mittag_leffler_est} for $s=1/2$.
\end{proof}

Finally, we are in a position to also include the inhomogenity $f$ into our treatment.
Just as in Lemma~\ref{lemma:parabolic_inhomogeneous_finite_regularity}, we
use integration by parts to decompose the error into parts for positive
times and a remainder integral with ``nice enough'' behavior with respect to $\tau$.

\begin{theorem}
  \label{thm:convergence_parabolic_super_smooth_full}
  Assume that either $\alpha+\beta<2$ or $\sigma=1$ (i.e., the
  case $\alpha=\beta=1$ and $\sigma=1/2$ is not allowed).  
  Let $u$ be the solution to~\eqref{eq:parabolic_model_problem}, and assume that the
  data satisfy
  \begin{align}
    \label{eq:gevrey_parabolic}
    \begin{split}
    \norm{u_0}_{\mathbb{H}^{\rho}(\Omega)}
    &\leq C_{u_0} \, R_{u_0}^{\rho} \,\big(\Gamma(\rho+1)\big)^{\omega}  < \infty  \;\;\qquad \qquad  \forall \rho\geq 0 \\
    \big\|{f^{(n)}(t)}\big\|_{\mathbb{H}^{\rho}(\Omega)}
    &\leq C_{f} \; R_{f}^{\rho+n} \,\big(\Gamma(\rho+1)\big)^{\omega} \big(n!\big)^{\omega}  < \infty \qquad
    \forall t \in [0,T],\; \forall  \rho\geq 0,\;\forall n \in \N_0 .
  \end{split}      
  \end{align}

  Let $u_k$ be the corresponding discretization using stepsize $k \in (0,1/2)$
  and $\Nquad \in \N$ quadrature points as defined in~\eqref{eq:2}.
  Then, the following estimate holds:
  \begin{align*}
      \norm{u(t)-u_{k}(t)}_{\mathbb{H}^{\beta}(\Omega)}
    &\lesssim (1+t)
      \exp\Big(-\frac{\gamma}{k\abs{\ln{t_\star}}\abs{\ln{k}}}\Big) + \big(t+ t^{-\gamma/2}\big)\exp\big(- \gamma e^{k \Nquad}\big)
  \end{align*}
  with $t_\star:=\min(t,1/2)$.
  The implied constant and $\gamma$ may depend on  the smallest eigenvalue $\lambda_0$ of $\LL$, $\beta$, $\theta$, $\sigma$ and the constants from~\eqref{eq:gevrey_parabolic}.
\end{theorem}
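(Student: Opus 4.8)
The plan is to mirror the proof of Theorem~\ref{lemma:parabolic_inhomogeneous_finite_regularity} (integration by parts in time), but to feed the resulting pieces into the Gevrey-tuned machinery of Lemma~\ref{lemma:parabolic_full_discretization:super_smooth} and Corollary~\ref{cor:elliptic_full_discretization:super_smooth} instead of the finite-regularity estimates. First I split $u-u_k$ into the homogeneous part $E^{\LL}(e_{\alpha,1}(-t^\alpha z^\beta))u_0$, which is handled verbatim by Lemma~\ref{lemma:parabolic_full_discretization:super_smooth} (with $\mu=1$), and the inhomogeneous part coming from the convolution term in~\eqref{eq:parabolic_representation_formula}. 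On the latter I integrate by parts a \emph{fixed} number $m$ of times using~\eqref{eq:diff_of_ml}, exactly as in Theorem~\ref{lemma:parabolic_inhomogeneous_finite_regularity}, obtaining the boundary terms $t^{\alpha+j-1}E^{\LL}(e_{\alpha,\alpha+j}(-t^\alpha z^\beta))f^{(j-1)}(0)$, $j=1,\dots,m$, and the remainder integral $\int_0^t\tau^{\alpha+m-1}E^{\LL}(e_{\alpha,\alpha+m}(-\tau^\alpha z^\beta))f^{(m)}(t-\tau)\,d\tau$. Here $m$ may stay bounded (e.g.\ $m=3$), because, in contrast to the finite-regularity case, the Gevrey regularity is exploited in the \emph{spatial} variable rather than through repeated integration by parts.

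Next I estimate the boundary terms: each is at the fixed time $t$, and since $f^{(j-1)}(0)$ obeys the spatial Gevrey bound in~\eqref{eq:gevrey_parabolic} (with constants $C_fR_f^{j-1}((j-1)!)^\omega$, $R_f$, $\omega$), Lemma~\ref{lemma:parabolic_full_discretization:super_smooth} applied with $\mu=\alpha+j$ and $u_0$ replaced by $f^{(j-1)}(0)$ bounds the $j$-th term by $\lesssim C_f\exp(-\gamma/(k\abs{\ln t_\star}\abs{\ln k}))$ plus its cutoff contribution, the prefactor $t^{\alpha+j-1}$ being a $T$-dependent constant. For the remainder integral I would \emph{not} apply Lemma~\ref{lemma:parabolic_full_discretization:super_smooth} inside the integral (its rate $\exp(-\gamma/(k\abs{\ln\tau_\star}\abs{\ln k}))$ would become $\tau$-dependent and, after multiplying by $\tau^{\alpha+m-1}$ and integrating, only yield a $\sqrt{k}$-type rate). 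Instead I expand $e_{\alpha,\alpha+m}(-\tau^\alpha z^\beta)$ by~\eqref{eq:mittag_leffler_decomp} to a fixed depth $N$ with $m>\alpha(N+1)$ (so $N=1$ works): the power terms $\tau^{-\alpha n}E^{\LL}(z^{-\beta n})f^{(m)}(t-\tau)$ are controlled by Corollary~\ref{cor:elliptic_full_discretization:super_smooth}, whose rate $\exp(-\gamma/(k\abs{\ln k}))$ is \emph{independent of $\tau$}, so only the harmless $\tau^{-\alpha n}$ survives and is integrable against $\tau^{\alpha+m-1}$; and the Mittag--Leffler remainder $E^{\LL}(R^N_{\alpha,\alpha+m}(-\tau^\alpha z^\beta))f^{(m)}(t-\tau)$ is controlled by Corollary~\ref{cor:sinc_in_dexp_for_quadrature_operators} with source space $\mathbb{H}^{2s}$, $s:=\delta/(k\abs{\ln k}^2)$, using $\abs{R^N_{\alpha,\alpha+m}(-\tau^\alpha z^\beta)}\le C\,\Gamma(\alpha N)\,\tau^{-\alpha(N+1)}\abs{z}^{-\beta(N+1)}$ from Proposition~\ref{prop:decompose_ml} (the sector condition behind the hypothesis $\alpha+\beta<2$ or $\sigma=1$ being exactly what makes this applicable, as in Lemma~\ref{lemma:parabolic_full_discretization_hom_finte_regularity}). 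This yields $\lesssim\Gamma(\alpha N)\norm{f^{(m)}(t-\tau)}_{\mathbb{H}^{2s}}\tau^{-\alpha(N+1)}\exp(-\gamma_1\sqrt{s}/\sqrt{k})$, again with a $\tau$-independent rate; multiplying by $\tau^{\alpha+m-1}$ and integrating leaves the rates untouched and produces only a bounded power of $t$.

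The crux is then the same elementary optimisation that closes Corollary~\ref{cor:elliptic_full_discretization:super_smooth} and Lemma~\ref{lemma:parabolic_full_discretization:super_smooth}: with $s=\delta/(k\abs{\ln k}^2)$ one has $\sqrt{s}/\sqrt{k}\sim\sqrt{\delta}/(k\abs{\ln k})$, Stirling gives $(\Gamma(2s+1))^\omega\lesssim\exp(2\omega\delta/(k\abs{\ln k}))$, and $\Gamma(\alpha N)$, $((j-1)!)^\omega$ are bounded; taking $\delta>0$ small enough that $2\omega\delta<\gamma_1\sqrt{\delta}$ lets $\exp(-\gamma_1\sqrt{s}/\sqrt{k})$ absorb all the Gevrey and Gamma prefactors and gives a clean rate $\exp(-\gamma/(k\abs{\ln k}))$ for the entire inhomogeneous part. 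Consequently the only $\abs{\ln t_\star}$-degradation in the final bound enters through the homogeneous and boundary terms via Lemma~\ref{lemma:parabolic_full_discretization:super_smooth}, and the polynomial factors of $t$ coming from the $\tau$-integrations and the boundary prefactors collapse into the $(1+t)$. The cutoff error is treated exactly as in the earlier proofs, using the decay~\eqref{eq:mittag_leffler_est} of $e_{\alpha,\mu}$ with $s=1/2$; the homogeneous cutoff supplies the $t^{-\alpha/2}$ term and the convolution cutoff a nonnegative power of $t$, giving the $(t+t^{-\gamma/2})$ factor.

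I expect the main difficulty to be the bookkeeping in the last two paragraphs, i.e.\ verifying simultaneously that: (i) a single choice of $m$ (and $N$ with $m>\alpha(N+1)$) keeps every negative power of $\tau$ — the $\tau^{-\alpha n}$ from the power terms and the $\tau^{-\alpha(N+1)}$ from the Mittag--Leffler remainder — integrable against the $\tau^{\alpha+m-1}$ weight, including the borderline case $\alpha=1$; (ii) the growing index $s\sim1/(k\abs{\ln k}^2)$ both produces a strong enough rate in Corollary~\ref{cor:sinc_in_dexp_for_quadrature_operators} and keeps $\norm{f^{(m)}(t-\tau)}_{\mathbb{H}^{2s}}$ small enough, for one and the same $\delta$; and (iii) the reciprocal-Gamma coefficients $1/\Gamma(\alpha+m-\alpha n)$ (and those hidden inside the invocations of Lemma~\ref{lemma:parabolic_full_discretization:super_smooth}) stay bounded, which they do since $\alpha+m-\alpha n>2\alpha>0$ for the chosen $N$. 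Each check is routine once $m,N$ are fixed and $\delta$ is taken small, but keeping the $\abs{\ln t_\star}$ weight confined to exactly the homogeneous/boundary contributions requires some care.
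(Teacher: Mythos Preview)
Your plan to integrate by parts only a \emph{fixed} number $m$ of times and then to extract the Gevrey rate for the Mittag--Leffler remainder $E^{\LL}(R^N_{\alpha,\alpha+m}(-\tau^\alpha z^\beta))f^{(m)}(t-\tau)$ by pushing a growing spatial index $s=\delta/(k|\ln k|^2)$ into Corollary~\ref{cor:sinc_in_dexp_for_quadrature_operators} does not work. That corollary is stated (and proved) only for $0\le s\le r$, and its rate cannot be upgraded to $\exp(-\gamma_1\sqrt{s}/\sqrt{k})$ for $s>r$: in the pointwise bound on $h_\lambda(y)=\lambda^{r-s}g(\psits(y))(\psits(y)-\lambda)^{-1}\psits'(y)$ one has, in the region $|\psits(y)|\ge\lambda/2$, only $|h_\lambda(y)|\lesssim\lambda^{r-s}|\psits(y)|^{-\mu}\cosh(\Re y)$, and for $r<s$ the factor $\lambda^{r-s}\le\lambda_0^{r-s}$ is just a constant that cannot be converted into extra decay $|\psits|^{r-s}$. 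Hence the double-exponential decay rate fed into Lemma~\ref{lemma:sinc_in_dexp} stays proportional to $\mu=\beta(N+1)$, yielding only $\exp(-c\sqrt{\beta(N+1)}/\sqrt{k})$; with your fixed $N$ this is an $e^{-c/\sqrt{k}}$ term, not $e^{-\gamma/(k|\ln k|)}$. The pole analysis that lets additional source-space regularity improve the rate (Theorem~\ref{thm:de_for_elliptic}, Corollary~\ref{cor:de_for_elliptic:lambda_robust}) is specific to $z^{-\beta}$ and is not available for $R^N_{\alpha,\mu}$.

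The paper's proof resolves this by doing exactly what you try to avoid: it does \emph{not} keep the integration-by-parts order fixed. It takes the number $N$ of integrations by parts and the depth of the Mittag--Leffler expansion equal and growing like $\delta/(k|\ln k|^2)$. Then (i) the remainder's decay exponent $\mu\sim\beta N$ is itself large, so Corollary~\ref{cor:sinc_in_dexp_for_quadrature_operators} with $s=r$ already yields $\exp(-c\sqrt{N}/\sqrt{k})\sim\exp(-c/(k|\ln k|))$; and (ii) the prefactor $\tau^{\alpha+N-1}$ exactly neutralises the $\tau^{-\alpha N}$ from the remainder bound, which is why no $|\ln t_\star|$ loss reappears inside the convolution integral. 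The temporal Gevrey assumption on $f$ is essential here, since it controls the factor $(N!)^\omega$ from $\|f^{(N)}\|$ together with $\Gamma(\alpha N)$; with fixed $m$ that hypothesis would be unused, but then the remainder rate is lost. If you want to repair your sketch, replace ``fixed $m$, fixed $N$, large $s$'' by ``$m=N\sim\delta/(k|\ln k|^2)$, $s=r$'' and redo the balancing; the rest of your outline (the treatment of the homogeneous part, the boundary terms via Lemma~\ref{lemma:parabolic_full_discretization:super_smooth}, and the cutoff error) is correct.
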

\begin{proof}
  We again work under the assumption $u_0=0$ and focus on the error when dealing with
  the inhomogenity $f$ alone and also start with $\Nquad=\infty$. We also for now
  take $t\leq 1$.
  
  Going back to~\eqref{eq:parabolic_inh_decomposition}  we get
  for $N \in \N_0$ to be fixed later
  \begin{equation}
    \label{eq:parabolic_inh_disc_smooth_step1}
    \begin{multlined}[c][14cm]
      \norm{u(t)-u_k(t)}_{\mathbb{H}^{2r}(\Omega)}
      \leq
      {\sum_{j=1}^{N}
      { t^{\alpha+j-1} \norm{ E^{\LL}\Big(e_{\alpha,\alpha+N}(-t^{\alpha} z^{\beta}) \Big)f^{(j-1)}(0)}_{\mathbb{H}^{\beta}(\Omega)}}}
    \\+       
      {\int_{0}^{t} { \tau^{\alpha+N-1}
          \norm{E^{\LL}\big(e_{\alpha,\alpha+n}\big(-\tau^{\alpha} z^\beta\big)\big) f^{(N)}(t-\tau)}_{\mathbb{H}^{\beta}(\Omega)}\,d\tau}}.
    \end{multlined}
  \end{equation}
  For the first terms, we apply Lemma~\ref{lemma:parabolic_full_discretization:super_smooth} to
  get exponential convergence, as long as $f^{(j)}$ is in the right Gevrey-type class.
  Namely, we note that we can estimate
  \begin{align*}
    \norm{{f}^{(n)}(t)}_{\mathbb{H}^{2\rho}(\Omega)}
    \lesssim e^{\widetilde{\omega} N \ln(N)} e^{\widetilde{\omega} \rho \ln(\rho)}
  \end{align*}
  by possibly  tweaking $\widetilde{\omega}$ compared to $\omega$.  This allows us to estimate
  \begin{align}
    \label{eq:parabolic_inh_disc_smooth_step2}
    {\sum_{j=1}^{N}
    { t^{\alpha+j-1} \norm{ E^{\LL}\Big(e_{\alpha,\alpha+j}(-  t^{\alpha} z^{\beta} ) \Big)f^{(j-1)}(0)}_{\mathbb{H}^{2r}(\Omega)}}}
    &\lesssim e^{\widetilde{\omega}N \ln(N)} e^{-\frac{\gamma}{\abs{\ln(t_\star)} \abs{\ln(k)} k}}.
  \end{align}
  Again restricting $\widetilde{\omega}$ to absorb the factor $N$ due to the summation.

  For the remainder in~\eqref{eq:parabolic_inh_disc_smooth_step1}, we
  look at the pointwise error at fixed $0<\tau<t$,
  shortening $\widetilde{f}^{(N)}:=f^{(N)}(t-\tau)$.  
  Going back to~\eqref{eq:decomp_of_ml_hom_smooth}, we can use the
  additional powers of $t$ to get rid of the $\ln(t)$ term in the exponential:
  \begin{align*}
    \tau^{\alpha+N-1}\norm{E^{\LL}\big(e_{\alpha,\alpha+n}\big(-\tau^{\alpha} z^\beta\big)\big)
    \widetilde{f}^{(N)}}_{\mathbb{H}^{\beta}(\Omega)}
    &\lesssim
      \sum_{n=1}^{N-1}{C_{f} e^{\widetilde{\omega} N \ln(N)}\frac{ \tau^{-\alpha(n-1) + N -1} }{\Gamma(\mu-\alpha n)}
      \exp\Big(-\frac{\gamma}{k \abs{\ln(k)}}\Big)
      } \\
    &+ \Gamma(\alpha N) \tau^{(1-\alpha)(N-1)}
      C_f e^{\widetilde{\omega} N \ln(N)} \exp\Big(-\gamma \frac{\sqrt{N-2\varepsilon}}{\sqrt{k}}\Big).
  \end{align*}
  We then proceed as in the proof of Lemma~\ref{lemma:parabolic_full_discretization:super_smooth},
  noting that since the $\tau$-dependent terms can be bounded independently of $N$ we can
  get by without the $\ln(t_\star)$-term in the exponent. Overall, we get by
  tuning $N \sim \delta/(\abs{\ln(k)}^2 k)$ (also in \eqref{eq:parabolic_inh_disc_smooth_step2})
  appropriately:
  \begin{align*}
      \norm{u(t)-u_k(t)}_{\mathbb{H}^{2r}(\Omega)}
    &\lesssim 
       \exp\Big(-\frac{\gamma}{k \abs{\ln(t_\star)} \abs{\ln(k)}}\Big)
      + \int_{0}^{t}{\exp\Big(-\frac{\gamma}{k \abs{\ln(k)}}\Big) \,d\tau}.
  \end{align*}
  which easily gives the stated result.
  If $t>1$, we can skip the integration by parts step for the integration over $(1,t)$ and directly
  apply Lemma~\ref{lemma:parabolic_full_discretization:super_smooth}.
  The cutoff error is treated as always.  
\end{proof}

\section{Numerical examples}
\label{sect:numerics}
In this section, we investigate, whether the theoretical results obtained in Sections~\ref{sect:elliptic} and
\ref{sect:parabolic} can also be observed in practice.
We compare the following quadrature schemes:
\begin{enumerate}[(i)]
\item DE1: double exponential quadrature using $\sigma=1/2$ and $\theta=4$,
\item DE2: double exponential quadrature using $\sigma=1$ and $\theta=4$,
\item DE3: double exponential quadrature using $\sigma=1$ and $\theta=1$,
\item sinc: standard sinc quadrature
\item Balakrishnan: a quadrature scheme based on the Balakrishnan formula
\end{enumerate}

For the double exponential quadrature schemes,
we used $k=0.9\ln(\Nquad)/\Nquad$. This makes the
cutoff error decay like $e^{- \Nquad^{0.9}}$, which is sufficiently fast to not impact
the overall convergence rate. The factor $0.9$ was observed to have some slightly improved stability
compared to $1$.

For the standard sinc-quadrature, the proper tuning of $k$ and $\Nquad$ is more involved.
Following~\cite{bonito_pasciak_parabolic}, we picked
$k=\sqrt{\frac{2\pi d}{\beta\Nquad}}$ with $d=\pi/5$. 
The Balakrishnan formula is only possible for the elliptic problem. It is described in detail
in~\cite{blp_accretive}. Following~\cite[Remark 3.1]{blp_accretive} we used
\begin{align*}
  k:=\sqrt{\frac{\pi^2}{1.8\beta N}} \qquad
  M:=\left\lceil \frac{\pi^2}{2(1-\beta)k^2}\right\rceil,
\end{align*}
where $M$ is the number of negative quadrature points. This corresponds
(in their notation) to taking $s^+:=\beta/10$, which
was taken because it yielded good results.

\subsection{The pure quadrature problem}
In this section, we focus on a scalar quadrature problem. Namely, we investigate
how well our quadrature scheme can approximately evaluate the following functions
using the Riesz-Dunford calculus
(a) $z^{-\beta}$ and (b) $e_{\alpha,1}(-t^{\alpha} z^{\beta})$
at different values $\lambda \in (4,\infty)$.
This is equivalent to solving the elliptic and parabolic problem with data consisting of
a single eigenfunction corresponding to the eigenvalue $\lambda$.
Throughout, we used $\kappa:=3$.
Theoretical investigations revealed, that the quadrature error is largest at $\ln(\lambda) \sim k^{-1/2}$
(see the proof of Corollary~\ref{cor:de_for_elliptic:lambda_robust}).
Therefore, we make sure
that for each $k$ under consideration, such a value of $e^{\frac{1}{\sqrt{k}}}$ is among the $\lambda$-values sampled.
More precisely, the sample points consist of
\begin{align*}
  \bigcup_{\Nquad=1}^{N_{\max}}\big\{   5+\exp\big(2\sqrt{\beta/k(\Nquad)}\big)  \big\} \cup  \big\{5+\exp(\beta/k(\Nquad)) \big\} \qquad
  \text{with } \qquad k(\Nquad)=0.9 \ln(\Nquad)/\Nquad,
\end{align*}
and we consider the maximum error over all these samples. We used $t:=1$ for all experiments.

\begin{figure}[htb]
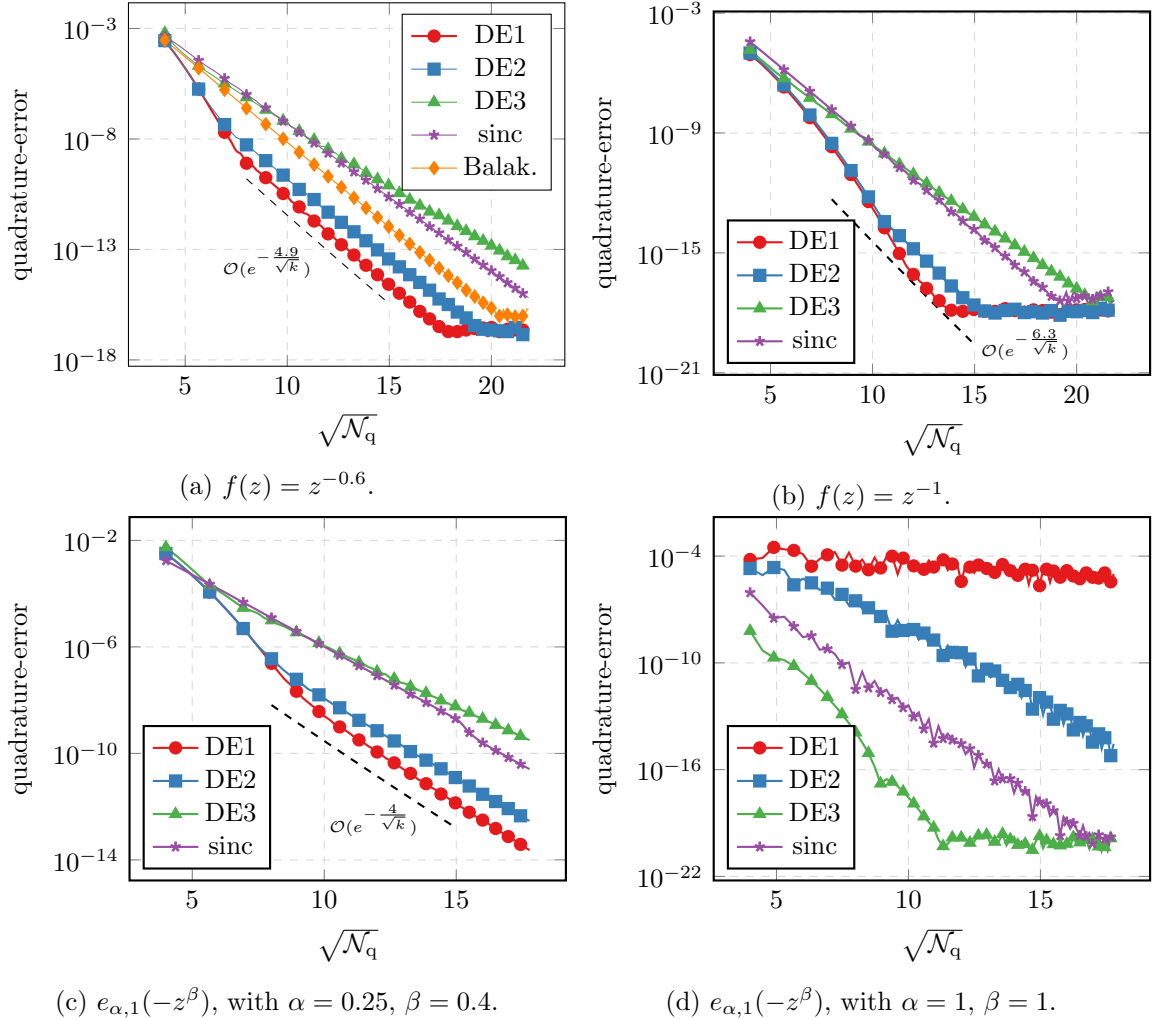

  \centering
  \begin{subfigure}{0.46\textwidth}
    \includeTikzOrEps{conv_scal_pow}
    \vspace{-1\baselineskip}
  \caption{$f(z)=z^{-0.6}$.}    
  \label{fig:comp_pow}
\end{subfigure}
\;
  \begin{subfigure}{0.46\textwidth}
    \includeTikzOrEps{conv_scal_pow2}
    \vspace{-1\baselineskip}
  \caption{$f(z)=z^{-1}$.}    
  \label{fig:comp_pow2}
\end{subfigure}
\\
  \begin{subfigure}{0.46\textwidth}
    \includeTikzOrEps{conv_scal_ml1}
    \vspace{-1\baselineskip}
  \caption{$e_{\alpha,1}(- z^{\beta})$,
    with $\alpha=0.25$, $\beta=0.4$.}    
  \label{fig:comp_ml1}
\end{subfigure}
\;
  \begin{subfigure}{0.46\textwidth}
    \includeTikzOrEps{conv_scal_ml2}
        \vspace{-1\baselineskip}
        \caption{ $e_{\alpha,1}(- z^{\beta})$,
          with $\alpha=1$, $\beta=1$.}    
  \label{fig:comp_ml2}
\end{subfigure}
\caption{Comparison of quadrature schemes -- scalar problem}
\end{figure}

We observe that for the most part, choosing $\sigma=1/2$ and $\theta$ moderately large gives the
best result. This agrees with our theoretical findings. This method fails to converge though
if $\alpha=\beta=1$ is chosen as the parameters for the Mittag-Leffler function. This also
agrees with the theory, because in this case, $\psits$ fails to map into the domain where
$e_{\alpha,\mu}$  is decaying (see~\eqref{eq:mittag_leffler_est}). This shows
that the restriction on $\sigma$ in the theorems of Section~\ref{sect:parabolic} is
necessary. If we only consider the elliptic problem, no such restriction is necessary, as the
decay property is valid on all of the complex plane. All the other methods perform well in all of
the cases. The straight-forward double exponential formula, i.e. $\sigma=\theta=1$,
is often outperformed by the simple sinc quadrature scheme, (except in the $\alpha=\beta=1$ case of the
exponential).
For comparison, we've included the (rounded) predicted rate for the DE1 scheme in the plots.
We observe that for several applications our estimates appear sharp. For $f(z)=z^{-1}$
the scheme outperforms the prediction, but this might be due to a large preasymptotic regime.
We note that for $e^{-z^{\beta}}$, we expect better estimates than the ones presented
in this article to be possible due to the exponential decay. This is also true
for the standard sinc methods, see~\cite{blp17}.

Second, we look at the case of a single frequency $\lambda$ and see how the convergence rate decays as $\lambda \to \infty$.
In order to better see the $\lambda$-dependence of the quadrature error,
we consider the relative error of the quadrature, i.e., we look at
$
E^{\lambda}(z^{-\beta})/\lambda^{-\beta}
$
for $\beta=0.5$.
The theory from Theorem~\ref{thm:de_for_elliptic} predicts behavior of the form
$e^{-\frac{\gamma}{\ln(\lambda)k}}$, i.e., the rate drops like $\ln(\lambda)$.
In Figure~\ref{fig:lambda_dependence}, we can see this behavior quite well. In
comparison, using standard $\sinc$ quadrature gives a $\lambda$-robust asymptotic
rate, but only of order $\sqrt{\Nquad}$.
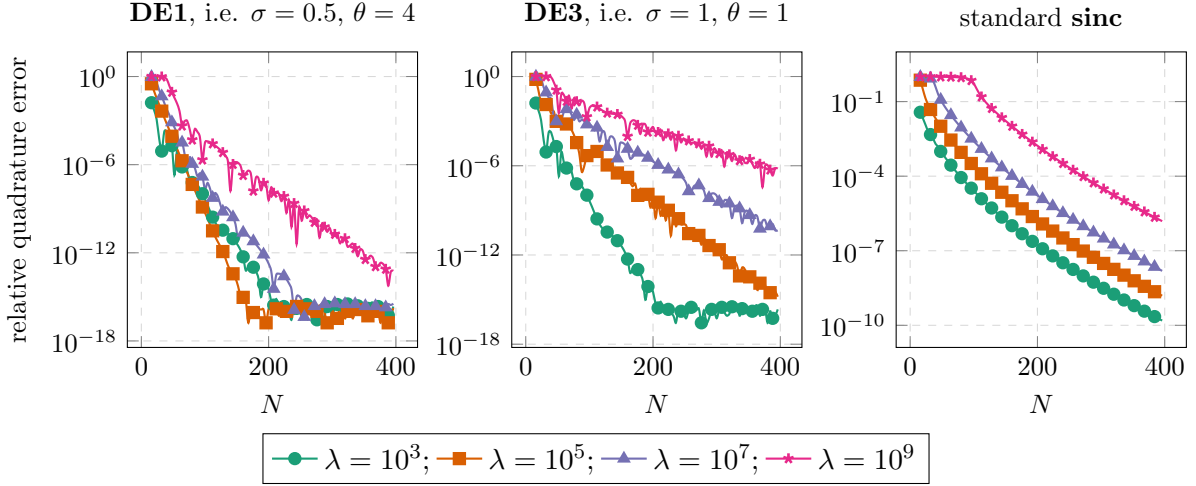
\begin{figure}
  \centering  
  \setlength\tabcolsep{0 pt}
  \begin{tabular}{@{}ccc@{}}
\begin{tikzpicture}
  \pgfplotsset{
        cycle list/Dark2,
        cycle multiindex* list={
            mark list\nextlist
            Dark2\nextlist
        },
    }
  \begin{semilogyaxis}[
    width=5.4cm, height=5.5cm,     
    grid = major,
    title={{\small{\textbf{DE1}, i.e. $\sigma=0.5$, $\theta=4$}}},
    grid style={dashed, gray!30},
    axis background/.style={fill=white},
    xlabel=$N$,
    ylabel=relative quadrature error,
    legend columns=-1,
    legend entries={$\lambda=10^3$;,$\lambda=10^5$;,$\lambda=10^7$;,$\lambda=10^9$},
    legend to name=mylegend,
    x tick label style={/pgf/number format/1000 sep=\,,font=\small},
    font= \small,
    ]

    \addplot+[solid,thick, smooth,mark size=2pt,mark options={line width=1.0pt},
    mark repeat={4}
    ] table    
    [
    x=N,
    y=l0,
    col sep=space
    ]{figures/data/conv_scal_lambda_de.csv};

    \addplot+[solid,thick, smooth,mark size=2pt,mark options={line width=1.0pt},
    mark repeat={4}
    ] table    
    [
    x=N,
    y=l1,
    col sep=space
    ]{figures/data/conv_scal_lambda_st.csv};

    \addplot+[solid,thick, smooth,mark size=2pt,mark options={line width=1.0pt},
    mark repeat={4}
    ] table    
    [
    x=N,
    y=l2,
    col sep=space
    ]{figures/data/conv_scal_lambda_st.csv};

    \addplot+[solid,thick, smooth,mark size=2pt,mark options={line width=1.0pt},
    mark repeat={4}
    ] table    
    [
    x=N,
    y=l4,
    col sep=space
    ]{figures/data/conv_scal_lambda_st.csv};

  \end{semilogyaxis} 
\end{tikzpicture} &
\begin{tikzpicture}
    \pgfplotsset{
        cycle list/Dark2,
        cycle multiindex* list={
            mark list\nextlist
            Dark2\nextlist
        },
    }

  \begin{semilogyaxis}[
    width=5.4cm, height=5.5cm,     
    grid = major,
    grid style={dashed, gray!30},
    axis background/.style={fill=white},
    xlabel=$N$,
    x tick label style={/pgf/number format/1000 sep=\,},
    font= \small,
    title={{\small{\textbf{DE3}, i.e. $\sigma=1$, $\theta=1$}}},
    ]

    \addplot+[solid,thick, smooth,mark size=2pt,mark options={line width=1.0pt},
    mark repeat={4}
    ] table    
    [
    x=N,
    y=l0,
    col sep=space
    ]{figures/data/conv_scal_lambda_de.csv};

    \addplot+[solid,thick, smooth,mark size=2pt,mark options={line width=1.0pt},
    mark repeat={4}
    ] table    
    [
    x=N,
    y=l1,
    col sep=space
    ]{figures/data/conv_scal_lambda_de.csv};

    \addplot+[solid,thick, smooth,mark size=2pt,mark options={line width=1.0pt},
    mark repeat={4}
    ] table    
    [
    x=N,
    y=l2,
    col sep=space
    ]{figures/data/conv_scal_lambda_de.csv};

    \addplot+[solid,thick, smooth,mark size=2pt,mark options={line width=1.0pt},
    mark repeat={4}
    ] table    
    [
    x=N,
    y=l4,
    col sep=space
    ]{figures/data/conv_scal_lambda_de.csv};
  \end{semilogyaxis} 
\end{tikzpicture}
  &
    \begin{tikzpicture}
    \pgfplotsset{
        cycle list/Dark2,
        cycle multiindex* list={
            mark list\nextlist
            Dark2\nextlist
        },
    }

  \begin{semilogyaxis}[
    width=5.4cm, height=5.5cm,     
    grid = major,
    grid style={dashed, gray!30},
    axis background/.style={fill=white},
    xlabel=$N$,
    legend style={legend pos= south west,font=\small},
    ymax=10,
    x tick label style={/pgf/number format/1000 sep=\,},
    font= \small,
    title={{\small{standard \textbf{sinc}}}},
    ]

    \addplot+[solid,thick, smooth,mark size=2pt,mark options={line width=1.0pt},
    mark repeat={4}
    ] table    
    [
    x=N,
    y=l0,
    col sep=space
    ]{figures/data/conv_scal_lambda_sinc.csv};

    \addplot+[solid,thick, smooth,mark size=2pt,mark options={line width=1.0pt},
    mark repeat={4}
    ] table    
    [
    x=N,
    y=l1,
    col sep=space
    ]{figures/data/conv_scal_lambda_sinc.csv};

    \addplot+[solid,thick, smooth,mark size=2pt,mark options={line width=1.0pt},
    mark repeat={4}
    ] table    
    [
    x=N,
    y=l2,
    col sep=space
    ]{figures/data/conv_scal_lambda_sinc.csv};

    \addplot+[solid,thick, smooth,mark size=2pt,mark options={line width=1.0pt},
    mark repeat={4}
    ] table    
    [
    x=N,
    y=l4,
    col sep=space
    ]{figures/data/conv_scal_lambda_sinc.csv};    
  \end{semilogyaxis} 
\end{tikzpicture}
\\                    
  \multicolumn{3}{c}{
    \ref{mylegend}
  }
\end{tabular}

    \caption{Comparison of $\lambda$ dependence for different quadrature schemes}
    \label{fig:lambda_dependence}
  \end{figure}

  \subsection{A 2d example}
  In order to confirm our theoretical findings in a more complex setting,
  we now look at a 2d model problem with more realistic  data than a single eigenfunction.
  Namely
  we consider the unit square $\Omega=(0,1)^2$. We focus on
  two cases: first we look at what happens if the initial condition does not
  satisfy any compatibility condition, i.e., $u_0 \notin \mathbb{H}^{2\rho}(\Omega)$
  for $\rho \geq 1/4$. The second example is then taken such that
  the data is (almost) in the Gevrey-type class as required by Corollary~\ref{cor:elliptic_full_discretization:super_smooth}
  and Theorem~\ref{thm:convergence_parabolic_super_smooth_full}.
  The inhomogenity in time is taken as $f(t):=\sin(t)u_0$, thus possessing analogous
  regularity properties. We computed the function at $t=0.1$.

  For the discretization in space and of the
  convolution in time of \eqref{eq:parabolic_representation_formula},
  we consider the scheme presented in~\cite{MR20_hp_sinc}. It is based on $hp$-finite elements
  for the Galerkin solver and a $hp$-quadrature on a geometric grid in time for the convolution.
  As it is shown there, such a scheme delivers exponential convergence with respect to the
  polynomial degree and the number of quadrature points. Since we are not interested
  in these kinds of discretization errors, we fixed these discretization parameters
  in order to give good accuracy and only focus on the error due to discretizing the
  functional calculus.

  Since the exact solution is not available, we computed a reference solution with high
  accuracy and compared our other approximations to it. The reference solution
  is computed by the DE1 scheme (as it outperformed the others) by using 8 additional
  quadrature points to the finest approximation present in the graph. As the DE1 scheme
  has finished convergence at this point, we can expect this to be a good approximation.

  We start with the parabolic problem. The initial condition is given by
  $$u_0(x,y):=\omega^{-1} \exp\Big(-\frac{(x-0.5)^2}{\omega}\Big) \exp\Big(-\frac{(y-0.5)^2}{\omega}\Big).$$
  For  $\omega:=1$, this function does not vanish near the
  boundary of $\Omega$ and therefore only satisfies $u_0 \in \mathbb{H}^{1/2-\varepsilon}(\Omega)$.
  We are  in the setting of Lemma~\ref{lemma:parabolic_inhomogeneous_finite_regularity}.
  By inserting $\rho=1/4$ (up to $\varepsilon$) and $r=0$,
  the predicted rates for DE1 and DE2 are roughly $e^{-\frac{6.13}{\sqrt{k}}}$
  and $e^{-\frac{5.62}{k}}$ respectively.
  Figure~\ref{fig:fem_ml1} contains our findings. We observe that all methods
  converge with exponential rate proportional to $\sqrt{\Nquad}$. The double exponential formulas outperforming
  the standard $\sinc$ quadrature. We also observe that picking $\sigma\neq 1$ and $\theta\neq 1$ can
  greatly improve the convergence. The best results being delivered by DE1, i.e. $\sigma=1/2$ and
  $\theta=4$.
  For DE1 and DE2, we observe
  that for a large part of the computation, the scheme outperforms the predicted asymptotic
  rate, but for DE2, the rate appears sharp for large values of $\Nquad$.
    
  As a second example, we used $\omega=0.01$. This function is almost equal to $0$ in a vicinity of
  the boundary  of $\Omega$. Thus we may hope to achieve the improved convergence rate of
  Theorem~\ref{thm:convergence_parabolic_super_smooth_full}. Figure~\ref{fig:fem_ml2}
  shows that it is plausible that the exponential rate of order $\Nquad$ is achieved, although
  the difference is hard to make out. What can be said is that all the double exponential schemes
  greatly outperform the standard $\sinc$ quadrature. The best results are again achieved
  by DE1, which also greatly outperforms the predicted rate for the non-smooth case.

  \begin{figure}
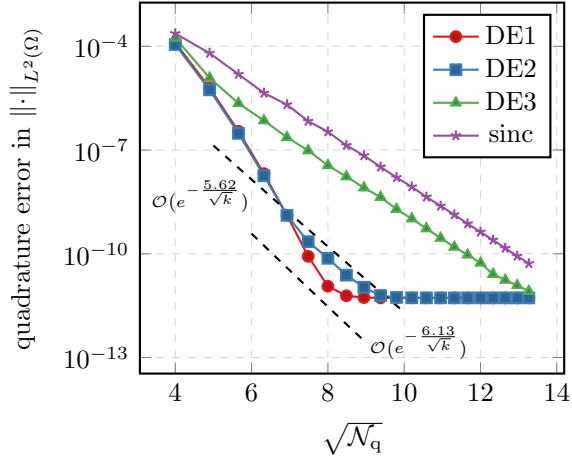
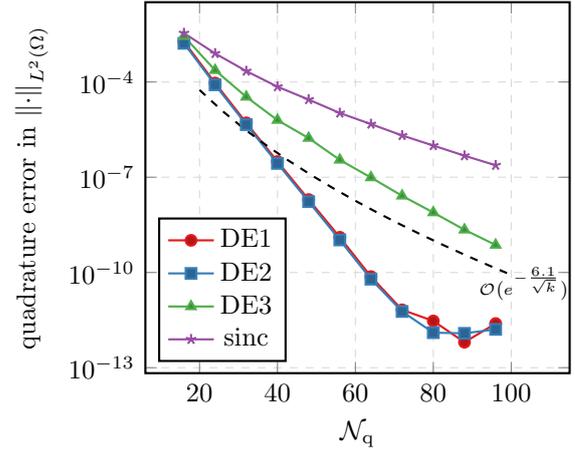

    \centering
    \begin{subfigure}{0.45\linewidth}
      \includeTikzOrEps{conv_fem_ml1}
      \vspace{-\baselineskip}
    \caption{incompatible data}
    \label{fig:fem_ml1}
  \end{subfigure}
  \hfill
    \begin{subfigure}{0.45\linewidth}
      \includeTikzOrEps{conv_fem_ml2}
      \vspace{-\baselineskip}
      \caption{compatible data}
    \label{fig:fem_ml2}
  \end{subfigure}  
  \caption{Comparison of quadrature schemes for 2d in-stationary example; $\alpha=1/\sqrt{2}$,
    $\beta=0.7$.}
\end{figure}

We also looked at the convergence for the elliptic problem. In this class, we 
also included the method based on the Balakrishnan formula.
As a model problem we again used the unit square. We chose $f=1$ as the constant function
and $\beta:=0.4$. We again observe that with $\theta=4$ and $\sigma \in \{0.5,1\}$, the
double exponential formulas significantly outperform the standard $\sinc$ based strategies,
where $\sigma=0.5$ again delivers the best performance.
The predicted rates for DE1 and DE2 in this case are $e^{-\frac{4.7}{\sqrt{k}}}$
  and $e^{-\frac{5.1}{\sqrt{k}}}$ respectively. We observe that asymptotically our
    estimates appear sharp, but with  a large range of values, for which the scheme outperforms
    the predictions.
\begin{figure}[htb]
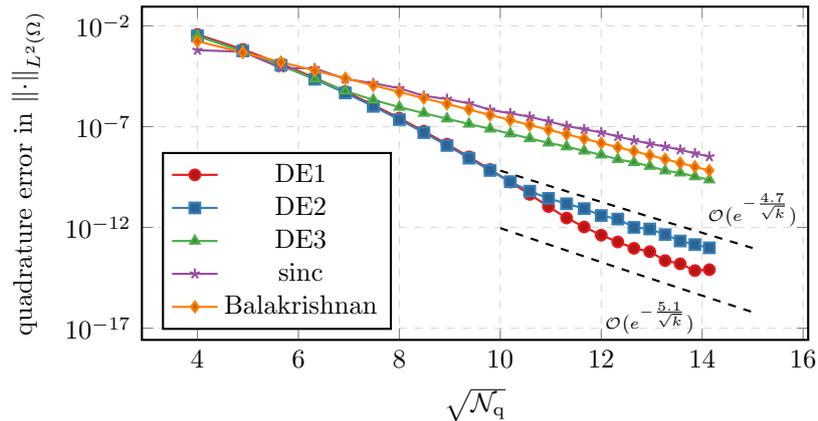

  \centering
  \includeTikzOrEps{conv_fem_el}
  \vspace{-\baselineskip}
  \caption{The elliptic problem in 2d, $\beta:=0.4$}
  \label{fig:fem_el}
\end{figure}

  \bibliographystyle{alphaabbr}
\bibliography{literature}

\newcommand{\etalchar}[1]{$^{#1}$}
\begin{thebibliography}{BMN{\etalchar{+}}18}

\bibitem[AB17a]{ab17}
G.~Acosta and J.~P. Borthagaray.
\newblock A fractional {L}aplace equation: regularity of solutions and finite
  element approximations.
\newblock {\em SIAM J. Numer. Anal.}, 55(2):472--495, 2017.

\bibitem[AB17b]{ab17b}
H.~Antil and S.~Bartels.
\newblock Spectral approximation of fractional {PDE}s in image processing and
  phase field modeling.
\newblock {\em Comput. Methods Appl. Math.}, 17(4):661--678, 2017.

\bibitem[ABH19]{abh19}
G.~Acosta, J.~P. Borthagaray, and N.~Heuer.
\newblock Finite element approximations of the nonhomogeneous fractional
  {D}irichlet problem.
\newblock {\em IMA J. Numer. Anal.}, 39(3):1471--1501, 2019.

\bibitem[BBN{\etalchar{+}}18]{bbnos18}
A.~Bonito, J.~P. Borthagaray, R.~H. Nochetto, E.~Ot\'{a}rola, and A.~J.
  Salgado.
\newblock Numerical methods for fractional diffusion.
\newblock {\em Comput. Vis. Sci.}, 19(5-6):19--46, 2018.

\bibitem[BLP17a]{blp17}
A.~Bonito, W.~Lei, and J.~E. Pasciak.
\newblock The approximation of parabolic equations involving fractional powers
  of elliptic operators.
\newblock {\em J. Comput. Appl. Math.}, 315:32--48, 2017.

\bibitem[BLP17b]{bonito_pasciak_parabolic}
A.~Bonito, W.~Lei, and J.~E. Pasciak.
\newblock Numerical {A}pproximation of {S}pace-{T}ime {F}ractional {P}arabolic
  {E}quations.
\newblock {\em Comput. Methods Appl. Math.}, 17(4):679--705, 2017.

\bibitem[BLP19]{blp_accretive}
A.~Bonito, W.~Lei, and J.~E. Pasciak.
\newblock On sinc quadrature approximations of fractional powers of regularly
  accretive operators.
\newblock {\em J. Numer. Math.}, 27(2):57--68, 2019.

\bibitem[BMN{\etalchar{+}}18]{tensor_fem}
L.~Banjai, J.~M. Melenk, R.~H. Nochetto, E.~Ot{\'a}rola, A.~J. Salgado, and
  C.~Schwab.
\newblock Tensor {FEM} for {S}pectral {F}ractional {D}iffusion.
\newblock {\em Foundations of Computational Mathematics}, Oct 2018.

\bibitem[BP15]{bp15}
A.~Bonito and J.~E. Pasciak.
\newblock Numerical approximation of fractional powers of elliptic operators.
\newblock {\em Math. Comp.}, 84(295):2083--2110, 2015.

\bibitem[BV16]{bv16}
C.~Bucur and E.~Valdinoci.
\newblock {\em Nonlocal diffusion and applications}, volume~20 of {\em Lecture
  Notes of the Unione Matematica Italiana}.
\newblock Springer, [Cham]; Unione Matematica Italiana, Bologna, 2016.

\bibitem[EMOT81]{emt81}
A.~Erd\'{e}lyi, W.~Magnus, F.~Oberhettinger, and F.~G. Tricomi.
\newblock {\em Higher transcendental functions. {V}ol. {III}}.
\newblock Robert E. Krieger Publishing Co., Inc., Melbourne, Fla., 1981.
\newblock Based on notes left by Harry Bateman, Reprint of the 1955 original.

\bibitem[GH15]{gh15}
P.~Gatto and J.~S. Hesthaven.
\newblock Numerical approximation of the fractional laplacian via $hp$-finite
  elements, with an application to image denoising.
\newblock {\em Journal of Scientific Computing}, 65(1):249--270, Oct 2015.

\bibitem[GO08]{go08}
G.~Gilboa and S.~Osher.
\newblock Nonlocal operators with applications to image processing.
\newblock {\em Multiscale Model. Simul.}, 7(3):1005--1028, 2008.

\bibitem[KR19]{kr19}
B.~Kaltenbacher and W.~Rundell.
\newblock Regularization of a backward parabolic equation by fractional
  operators.
\newblock {\em Inverse Probl. Imaging}, 13(2):401--430, 2019.

\bibitem[KST06]{ksh06}
A.~A. Kilbas, H.~M. Srivastava, and J.~J. Trujillo.
\newblock {\em Theory and applications of fractional differential equations},
  volume 204 of {\em North-Holland Mathematics Studies}.
\newblock Elsevier Science B.V., Amsterdam, 2006.

\bibitem[LB92]{sinc_book}
J.~Lund and K.~L. Bowers.
\newblock {\em Sinc methods for quadrature and differential equations}.
\newblock Society for Industrial and Applied Mathematics (SIAM), Philadelphia,
  PA, 1992.

\bibitem[LPGea20]{whatis}
A.~Lischke, G.~Pang, M.~Gulian, and et~al.
\newblock What is the fractional {L}aplacian? {A} comparative review with new
  results.
\newblock {\em J. Comput. Phys.}, 404:109009, 62, 2020.

\bibitem[MMR20]{hp_for_heat}
J.~Markus~Melenk and A.~Rieder.
\newblock {hp-FEM for the fractional heat equation}.
\newblock {\em IMA Journal of Numerical Analysis}, 04 2020.
\newblock drz054.

\bibitem[Mor91]{mori91}
M.~Mori.
\newblock Developments in the double exponential formulas for numerical
  integration.
\newblock In {\em Proceedings of the {I}nternational {C}ongress of
  {M}athematicians, {V}ol. {I}, {II} ({K}yoto, 1990)}, pages 1585--1594. Math.
  Soc. Japan, Tokyo, 1991.

\bibitem[MPSV18]{mpsv17}
D.~Meidner, J.~Pfefferer, K.~Sch\"{u}rholz, and B.~Vexler.
\newblock {$hp$}-finite elements for fractional diffusion, 2018.

\bibitem[MR21]{MR20_hp_sinc}
J.~M. Melenk and A.~Rieder.
\newblock An exponentially convergent discretization for space-time fractional
  parabolic equations using $hp$-fem.
\newblock {\em in preparation}, 2021.

\bibitem[NOS15a]{nos15}
R.~H. Nochetto, E.~Ot\'{a}rola, and A.~J. Salgado.
\newblock A {PDE} approach to fractional diffusion in general domains: a priori
  error analysis.
\newblock {\em Found. Comput. Math.}, 15(3):733--791, 2015.

\bibitem[NOS15b]{pde_approach_apriori}
R.~H. Nochetto, E.~Ot\'arola, and A.~J. Salgado.
\newblock A {PDE} approach to fractional diffusion in general domains: a priori
  error analysis.
\newblock {\em Found. Comput. Math.}, 15(3):733--791, 2015.

\bibitem[NOS16]{pde_frac_parabolic}
R.~H. Nochetto, E.~Ot\'arola, and A.~J. Salgado.
\newblock A {PDE} approach to space-time fractional parabolic problems.
\newblock {\em SIAM J. Numer. Anal.}, 54(2):848--873, 2016.

\bibitem[Rod93]{Ro93}
L.~Rodino.
\newblock {\em Linear partial differential operators in {G}evrey spaces}.
\newblock World Scientific Publishing Co., Inc., River Edge, NJ, 1993.

\bibitem[Ste93]{stenger03}
F.~Stenger.
\newblock {\em Numerical methods based on sinc and analytic functions},
  volume~20 of {\em Springer Series in Computational Mathematics}.
\newblock Springer-Verlag, New York, 1993.

\bibitem[SZB{\etalchar{+}}18]{collection_of_applications}
H.~Sun, Y.~Zhang, D.~Baleanu, W.~Chen, and Y.~Chen.
\newblock A new collection of real world applications of fractional calculus in
  science and engineering.
\newblock {\em Communications in Nonlinear Science and Numerical Simulation},
  64:213 -- 231, 2018.

\bibitem[TM74]{tm73}
H.~Takahasi and M.~Mori.
\newblock Double exponential formulas for numerical integration.
\newblock {\em Publ. Res. Inst. Math. Sci.}, 9:721--741, 1973/74.

\bibitem[V\'17]{v17}
J.~L. V\'{a}zquez.
\newblock The mathematical theories of diffusion: nonlinear and fractional
  diffusion.
\newblock In {\em Nonlocal and nonlinear diffusions and interactions: new
  methods and directions}, volume 2186 of {\em Lecture Notes in Math.}, pages
  205--278. Springer, Cham, 2017.

\end{thebibliography}

\begin{appendices}  
  \section{Properties of the coordinate transform $\psits$}
  \label{sect:properties_fo_psi}
  In this appendix, we study the transformation $\psits$ in detail,
  as it is crucial to understand the double exponential quadrature scheme.
  Since this transformation is itself defined in a two-part way, we introduce the following nomenclature.

  \begin{definition}
  \label{def:planes}
  We call the complex plane on which $\psits$ is defined the \emph{$y$-plane},
  mainly using the parameter $y$ for its points. The main subset
  of interest there is the strip $\Dt$.

  Using the function $y \mapsto \frac{\pi}{2}\sinh(y)$, the $y$-plane is mapped to the \emph{$w$-plane}.
  The most interesting set is the image of $\Dt$ under this deformation, denoted
  by $\Ht:=\big\{\frac{\pi}{2}\sinh(y), y\in \Dt \big\}$ (named due to its hyperbola shape).

  Finally, using the function $\varphits(w):=\kappa[\cosh(\sigma w)+\ii \theta \sinh(w)]$,
  mapping $\Ht \to \C$, we arrive at the
  \emph{$z$-plane} which corresponds to the range of $\psits$, and the domain
  of the functions used for the Riesz-Dunford calculus.  
  The situation is summarized in Figure~\ref{fig:transformation_planes}.

  If we talk about generic complex numbers without relation to any of the
  specific planes, we use the letter $\zeta$ instead.
\end{definition}

\begin{figure}[htb]
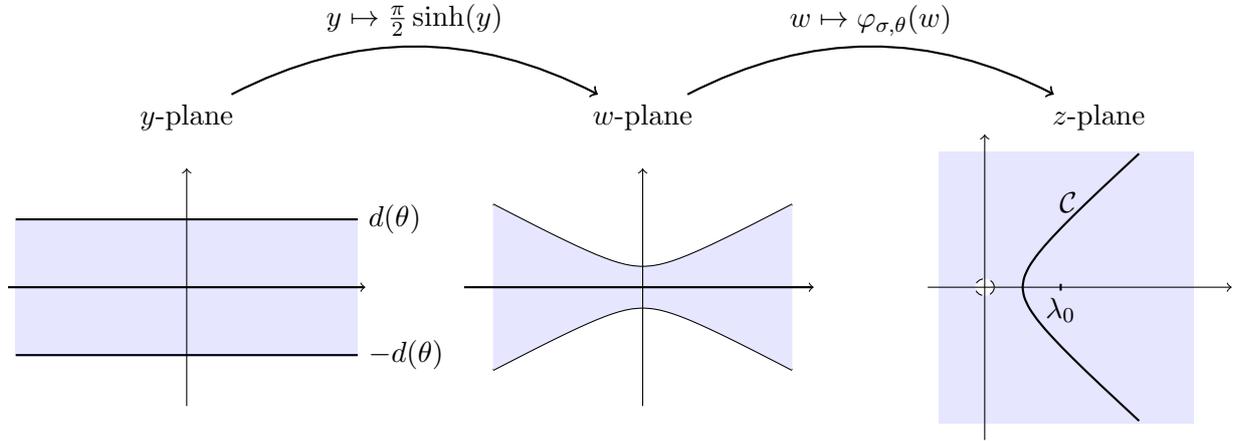

  \centering
  \includeTikzOrEps{planes}
   \vspace{-\baselineskip}
  \caption{Illustration of the different  planes involved in the mapping $\psits$}
  \label{fig:transformation_planes}
\end{figure}

We start out with some basic properties of $\sinh$.
\begin{lemma}
  \label{lemma:properties_of_sinh}
  The map $y \mapsto \frac{\pi}{2} \sinh(y)$ has the following properties:
  \begin{enumerate}[(i)]
  \item
    \label{it:sinh_is_bijective}    
    It is a bijective mapping $\Dt
    \to \Ht$, see Figure~\ref{fig:transformation_planes}.     
  \item
    \label{it:sinh_cosh_comparable}    
    For $\abs{\Re(\zeta)} \geq \zeta_0 > 0$, there exist  constants
    $c_1, c_2>0$ depending only on $\zeta_0$ such that
    $$
    c_1\abs{\sinh(\zeta)} \leq \abs{\cosh(\zeta)} \leq c_2 \abs{\sinh(\zeta)}.
    $$
  \item
    \label{it:sinh_maps_dexp_to_strip}
    For any $\delta < \pi/2$, 
    $\sinh$ maps the domain $\Dexp$,
    as defined in~\eqref{eq:def_dt_dexp}, to a strip of size $\delta$, i.e.,
    \begin{align}
    \label{eq:dexp_maps_to_strip}
    \abs{\Im(\sinh(y))}\leq \delta \qquad \forall y \in \Dexp.
    \end{align}
  \end{enumerate}
\end{lemma}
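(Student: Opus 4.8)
The lemma collects three elementary facts about $y\mapsto\frac{\pi}{2}\sinh(y)$, and the plan is to prove them in turn, throughout exploiting the real/imaginary decompositions $\sinh(a+\ii b)=\sinh a\cos b+\ii\cosh a\sin b$ and $\cosh(a+\ii b)=\cosh a\cos b+\ii\sinh a\sin b$.

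For (\ref{it:sinh_is_bijective}), since $\Ht$ is \emph{defined} as the image of $\Dt$, surjectivity holds by construction and only injectivity is at stake. I would invoke the factorisation
\[
  \sinh(z_1)-\sinh(z_2)=2\cosh\!\Big(\tfrac{z_1+z_2}{2}\Big)\sinh\!\Big(\tfrac{z_1-z_2}{2}\Big).
\]
The zeros of $\cosh$ lie on $\ii(\tfrac{\pi}{2}+\pi\Z)$ and those of $\sinh$ on $\ii\pi\Z$; for $z_1,z_2\in\Dt$ with $d(\theta)<\pi/2$ the first factor cannot vanish, since the imaginary part of its argument has modulus $<\pi$, and the second vanishes only for $z_1=z_2$, since the imaginary part of its argument has modulus $<\pi$, forcing the relevant integer multiple of $2\pi$ to be zero and the real parts to coincide. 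The same smallness hypothesis guarantees that $\cosh$ has no zero in $\Dt$, so $\frac{\pi}{2}\sinh$ is in fact a biholomorphism onto $\Ht$.

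For (\ref{it:sinh_cosh_comparable}), writing $\zeta=a+\ii b$ and using the decompositions above together with $\cosh^2 a=1+\sinh^2 a$, one gets $\abs{\sinh\zeta}^2=\sinh^2 a+\sin^2 b$ and $\abs{\cosh\zeta}^2=\sinh^2 a+\cos^2 b$. Both quantities lie in $[\sinh^2 a,\sinh^2 a+1]$, so for $\abs{a}\ge\zeta_0$ the quotient $\abs{\cosh\zeta}^2/\abs{\sinh\zeta}^2$ is pinched between $(1+\sinh^{-2}\zeta_0)^{-1}$ and $1+\sinh^{-2}\zeta_0$; taking square roots yields admissible constants $c_1,c_2$ depending only on $\zeta_0$. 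For (\ref{it:sinh_maps_dexp_to_strip}), for $y=\xi+\ii\eta\in\Dexp$ one has $\abs{\eta}<\delta e^{-\abs{\xi}}$ and $\Im\sinh(\xi+\ii\eta)=\cosh\xi\,\sin\eta$, hence $\abs{\Im\sinh y}\le\cosh\xi\,\abs{\eta}<\delta\,e^{-\abs{\xi}}\cosh\xi$; since $e^{-\abs{\xi}}\cosh\xi=\tfrac12\big(1+e^{-2\abs{\xi}}\big)\le1$ for all $\xi\in\R$, the bound $\abs{\Im\sinh y}\le\delta$ follows immediately.

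None of these steps is a genuine obstacle. The only point deserving a line of care is (\ref{it:sinh_is_bijective}), where one must record precisely how small $d(\theta)$ has to be so that both the injectivity argument and the non-vanishing of $\cosh$ go through; the condition $d(\theta)<\pi/2$ suffices and is covered by the standing assumption that $d(\theta)$ is sufficiently small.
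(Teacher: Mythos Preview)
Your proof is correct and follows essentially the same approach as the paper, which dispatches (\ref{it:sinh_is_bijective}) by citing the well-known injectivity of $\sinh$ on $\{|\Im(y)|<\pi/2\}$, (\ref{it:sinh_cosh_comparable}) by appealing to the common asymptotics of $\sinh$ and $\cosh$, and (\ref{it:sinh_maps_dexp_to_strip}) by the same one-line estimate you give. Your treatment is simply more explicit: the factorisation argument for (\ref{it:sinh_is_bijective}) and the closed forms $|\sinh\zeta|^2=\sinh^2 a+\sin^2 b$, $|\cosh\zeta|^2=\sinh^2 a+\cos^2 b$ for (\ref{it:sinh_cosh_comparable}) spell out what the paper leaves to the reader. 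One small slip in (\ref{it:sinh_is_bijective}): for the $\cosh$ factor you write that the imaginary part of $(z_1+z_2)/2$ has modulus $<\pi$, but this alone would not exclude the zero at $\ii\pi/2$; the actual bound is $<d(\theta)<\pi/2$, which does.
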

\begin{proof}
  Ad (\ref{it:sinh_is_bijective}):
  It is well known that $\sinh$ is injective for $\abs{\Im(y)}<\pi/2$.
  Since $\Ht$ is defined as the range of the map this is sufficient.
%
  Part~(\ref{it:sinh_cosh_comparable}) is an easy consequence of the fact that
  $\sinh$ and $\cosh$ have the same asymptotic behavior for $\Re(\zeta)\to \infty$.
  To see~(\ref{it:sinh_maps_dexp_to_strip}), we estimate for $y \in \Dexp$:
  \begin{align*}
  \abs{\Im(\sinh(y))}
  &=\cosh(\Re(y)) \abs{ \sin(\Im(y))}
    \leq \delta \,\cosh(\Re(y)) e^{-\abs{\Re(y)}}
    \leq \delta.
    \qedhere
  \end{align*}
\end{proof}

\begin{lemma}
  \label{lemma:psi_properties:analytic}
  $\psits$ is analytic on the infinite strip $\Dt$.
  For $d(\theta)$ sufficiently small, both $\psits$ and $\psits'$\
  are non-vanishing on $\Dt$.  
\end{lemma}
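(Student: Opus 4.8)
The plan is to use the factorization $\psits = \varphits \circ \big(\tfrac{\pi}{2}\sinh\big)$ from Definition~\ref{def:planes}, where $\varphits(w) = \kappa[\cosh(\sigma w) + \ii\theta\sinh(w)]$ is entire, which turns the statement into a question about the zeros of $\varphits$ and $\varphits'$. Throughout I assume $d(\theta)<\pi/2$, as is done elsewhere in the appendix.

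Analyticity is immediate: $\psits$ is a finite composition of sums and scalar multiples of $\sinh$ and $\cosh$, hence entire, and in particular analytic on $\Dt$. Differentiating yields $\psits'(y) = \tfrac{\pi}{2}\cosh(y)\,\varphits'\big(\tfrac{\pi}{2}\sinh y\big)$ with $\varphits'(w) = \kappa[\sigma\sinh(\sigma w) + \ii\theta\cosh(w)]$. Since $\cosh$ has no zeros in $\Dt$ (they sit at $\ii(\tfrac{\pi}{2}+\pi\Z)$), $\psits'$ vanishes at $y\in\Dt$ exactly when $\varphits'$ vanishes at $\tfrac{\pi}{2}\sinh y$, and likewise $\psits$ vanishes at $y$ exactly when $\varphits$ vanishes at $\tfrac{\pi}{2}\sinh y$. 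So it suffices to prove two things: (a) every zero of $\varphits$ and of $\varphits'$ lies on the imaginary axis and is at distance at least some $c(\theta)>0$ from $\R$; and (b) for $y=u+\ii v\in\Dt$ one has $\tfrac{\pi}{2}\sinh y\in\ii\R$ only if $u=0$ — because $\Re\sinh(y)=\sinh(u)\cos(v)$ and $\cos v>0$ on $\Dt$ — in which case $\big|\tfrac{\pi}{2}\sinh y\big| = \tfrac{\pi}{2}|\sin v| < \tfrac{\pi}{2}d(\theta)$. Choosing $d(\theta)<\tfrac{2}{\pi}c(\theta)$, the point $\tfrac{\pi}{2}\sinh y$ then avoids all zeros, so neither $\psits$ nor $\psits'$ vanishes on $\Dt$.

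For (a) I would locate the zeros by elementary manipulation, writing everything through $e^{w}$. For $\sigma=1$, $\varphi_{1,\theta}(w)=0$ reduces to $e^{2w}=-\tfrac{1-\ii\theta}{1+\ii\theta}$ and $\varphi'_{1,\theta}(w)=0$ to $e^{2w}=\tfrac{1-\ii\theta}{1+\ii\theta}$; both right-hand sides have modulus one, so $\Re w=0$, and solving for $\Im w$ gives nearest zeros at $|\Im w|=\atan(1/\theta)$ and $|\Im w|=\atan(\theta)$. For $\sigma=1/2$, the identity $\sinh w = 2\sinh(w/2)\cosh(w/2)$ factors $\varphi_{1/2,\theta}(w)=\kappa\cosh(w/2)\big[1+2\ii\theta\sinh(w/2)\big]$, so a zero satisfies $w\in\ii\pi(2\Z+1)$ or $\sinh(w/2)=\tfrac{\ii}{2\theta}$; in the second case $e^{w/2}=\tfrac12\big(\tfrac{\ii}{\theta}\pm\sqrt{4-1/\theta^2}\big)$ has modulus one for $\theta\ge1$, so again $\Re w=0$, with nearest zero at $|\Im w|=2\asin(1/(2\theta))$. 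Finally, for $\varphi'_{1/2,\theta}$, substituting $\cosh w = 1+2\sinh^2(w/2)$ produces a quadratic $2\ii\theta s^2+\tfrac12 s+\ii\theta=0$ in $s=\sinh(w/2)$; its discriminant $\tfrac14+8\theta^2$ is real and positive, so both roots $s$ are purely imaginary, and the elementary bound $|s|<1$ for $\theta\ge1$ then forces $\Re w=0$. Taking $c(\theta)$ to be the minimum of these four explicit gaps (all positive for $\theta\ge1$) completes the argument.

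The main obstacle is step (a), and within it the case of $\varphi'_{1/2,\theta}$: after the quadratic reduction one must check that \emph{both} roots satisfy $|s|<1$, since only then does a purely imaginary value of $\sinh(w/2)$ genuinely pin $w$ to $\ii\R$ rather than allow a zero on some line $\Im w\in\pi+2\pi\Z$; the ``modulus one'' reformulations via $e^{w}$ dispose of the other three cases cleanly, and in particular rule out stray zeros far out along the real direction.
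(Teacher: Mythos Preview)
Your proof is correct and follows essentially the same approach as the paper: locate the zeros of $\varphits$ and $\varphits'$ in the $w$-plane, show they lie on the imaginary axis at a $\theta$-dependent positive distance from the origin, and then argue that $\tfrac{\pi}{2}\sinh$ restricted to $\Dt$ hits the imaginary axis only in an arbitrarily small neighborhood of zero. The only noteworthy deviation is that for $\sigma=1$ the paper avoids analyzing $\varphi'_{1,\theta}$ directly by invoking the identity $\psito'(y)=\ii\theta\,\psi_{1,1/\theta}(-y)\tfrac{\pi}{2}\cosh(y)$, whereas you solve $e^{2w}=\tfrac{1-\ii\theta}{1+\ii\theta}$; both routes land on zeros at $|\Im w|=\atan(\theta)$, and your exponential formulation is if anything a bit cleaner than the paper's real/imaginary-part bookkeeping.
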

\begin{proof}
  The analyticity of $\psits$ is clear.
  In order to analyze the roots, we first rewrite for $w=a+\ii b$,
  separating the real and imaginary parts:
  \begin{equation}
    \begin{aligned}
    \cosh(\sigma w)+\ii \theta \sinh(w)    
      &=\Big(\cosh(\sigma a)\cos(\sigma b)-\theta \cosh(a) \sin(b) \Big)\\
      &+ \ii  \Big(\sinh(\sigma a)\sin(\sigma b)+\theta \sinh(a)\cos(b)\Big).
      \end{aligned}     \label{eq:hyperbola_re_im}
    \end{equation}
    %
%
    We first focus on \textbf{the case  $\mathbf{\boldsymbol \sigma=1}$}. In this case, ~\eqref{eq:hyperbola_re_im} shows that any root $y$ of $\psits$ must satisfy
  for $w:=\frac{\pi}{2}\sinh(y)=:a+b\ii$:
  \begin{align*}
    \cosh(a) \Big(\cos(b)-\theta \sin(b)\Big) &=0
        \quad \text{ and } \quad  \sinh(a) \Big(\theta \cos(b)+\sin(b)\Big) = 0.
  \end{align*}
  Since $\cosh$ has no roots, we get $\cos(b)=\theta \sin(b)$.
  As $\cos(b)=\theta \sin(b)$ and $\theta \cos(b)=-\sin(b)$ is impossible at the same time,
  we get that $a=0$ and $b=\atan(1/\theta) + \ell \pi$ for some $\ell \in \Z$.

  It remains to show that $\frac{\pi}{2}\sinh(y)$ does not map to these points.
  Looking at the real part of $\sinh(y)$ we immediately deduce that if
  $\Im(y) \in (-\pi/2,\pi/2)$, in order to produce a purely imaginary
  result, it must hold that $\Re(y)=0$.
  For the imaginary part, we then get the equation:
  $$
  \sin(\Im(y)) = 2\ell +\frac{2\atan(1/\theta)}{\pi} \quad \text{for some $\ell \in \Z$}
  $$
  which is not possible for $\abs{\Im(y)} \leq d(\theta)
  < \asin(\frac{2\atan(1/\theta)}{\pi})$.
  Next, we show that $\psits'$ also does not vanish. A simple calculation shows
  \begin{align}
    \label{eq:psits_deriv_s1}
  \psito'(y)=\ii \theta \,\psi_{1,\frac{1}{\theta}}(-y)\frac{\pi}{2}\cosh(y).
  \end{align}
  Since the restriction $\theta \geq 1$ was not crucial for the proof,
  $\psi_{1,\frac{1}{\theta}}$ and $\cosh$ have no roots in the symmetric (w.r.t. sign flip) domain $\Dt$.
  This shows that $\psito'$ also is non-vanishing.

  {\textbf{ The case $\mathbf{\boldsymbol \sigma=1/2}$}} is similar, but a little more involved.
  We first show that all zeros of $\cosh(\sigma w) +\ii \theta \sinh(w)$
  satisfy $\Re(w)=0$ and $\abs{\Im(w)} > w_0 >0$ for a constant $w_0$ depending
  only on $\theta$.  
  If $\cosh(w/2)\neq 0$ we can use the double angle formula for $\sinh$ to get  
  \begin{align*}
    0=\cosh(w/2)\big(1 + 2\theta\ii \sinh(w/2)\big)
    \qquad \text { which implies } \qquad 
   \sinh(w/2) = \frac{i}{2\theta}.
  \end{align*}
  Splitting into real and imaginary part, we get for $w=a+\ii b$:
  \begin{align*}
    \sinh(a/2)\cos(b/2) =0 \qquad \text {and } \qquad
    \cosh(a/2)\sin(b/2) = \frac{1}{2\theta}.
  \end{align*}
  If $a\neq 0$, we get that $\cos(b/2)=0$ and thus $\sin(b/2)= \pm 1$.
  This would imply that $\abs{\cosh(a/2)}=\frac{1}{2\theta} < 1/2$
  which is a contradiction. If $a=0$, we get that
  $$
  \abs{b}\geq 2  \big|{\asin\Big(\frac{1}{2\theta}\Big)}\big|> 0.
  $$
  Similarly, one can argue if $\cosh(w/2)=0$, that
  $a=0$ and $b=\pi (2\ell+1)$.
  We then proceed just like in the  case $\sigma=1$ to conclude that $\frac{\pi}{2}\sinh$
  does not map to such points.
  
  In order to investigate its roots, we compute the derivative of $\psith$ as
  \begin{align}
    \label{eq:psits_deriv_s12}
    \psith'(y)
    &=\Big(\frac{1}{2}\sinh\big(\pi \sinh(y)/4\big) + \ii \theta \cosh\big(\pi\sinh(y)/2\big)\Big)\frac{\pi}{2}\cosh(y).
  \end{align}
  Our main concern is when the first bracket reaches zero.
  Substituting $t:=\sinh(\pi\sinh(y)/4)$ and using the double angle formula for $\cosh$ we get
  \begin{align*}
    0&=\frac{t}{2} + \ii \theta (1+2t^2).
  \end{align*}
  Solving this equation, we get that $t$ is purely imaginary and
  for $\theta \geq 1$ satisfies $0<\abs{\Im(t)}<1$.  
  Again writing $w=:a+\ii b$ we get
  \begin{align*}
    \sinh(a/2)\cos(b/2) =0 \qquad \text {and } \qquad
    \cosh(a/2)\sin(b/2) = \Im(t).
  \end{align*}
  Just like we did when showing $\psith\neq 0$ we can argue that
  $a=0$. We get $\sin(b/2)=\Im(t)$. Since $t$ only depends on $\theta$,
  we get that $\abs{b} > b_0 > 0$ with a constant only depending on $\theta$.
  We proceed as  when showing $\psith\neq 0$ to conclude that
  $\psith'$ has no root in $\Dt$ for $d$ sufficiently small (depending on $\theta$).
\end{proof}


Next, we study the growth of $\psits(y)$ as $\abs{\Re(y)} \to \infty$.
\begin{lemma}
  \label{lemma:psi_growth}
  There exist constants $c_1, c_2$,  $\gamma_1, \gamma_2 >0$ such that
  for $y \in \Dt$ we can estimate
  \begin{align}
    \label{eq:psi_growh}
    c_1 \exp(\gamma_1 e^{\abs{\Re(y)}}) \leq \abs{\psits(y)} \leq c_2 \exp(\gamma_2 e^{\abs{\Re(y)}}),
  \end{align}
  i.e., the growth of $\psi$ is double exponential.
  Additionally, we can bound
  \begin{align}
    \label{eq:bound_dpsi}
    |{\psits'}(y)|\lesssim |\psits(y)|\cosh(\Re(y)).  
  \end{align}
\end{lemma}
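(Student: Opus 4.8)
The plan is to trace everything back to the behavior of $u:=\tfrac{\pi}{2}\sinh(y)$ and to split the strip $\Dt$ into a \emph{far region} $\{y\in\Dt:\abs{\Re(y)}\ge a_0\}$, where one has clean exponential asymptotics, and a bounded \emph{core region} $\{y\in\Dt:\abs{\Re(y)}<a_0\}$, which will be controlled by compactness together with the non-vanishing of $\psits$ from Lemma~\ref{lemma:psi_properties:analytic}. Writing $y=a+\ii b$ with $\abs{b}<d(\theta)$, one has $\Re(u)=\tfrac{\pi}{2}\sinh(a)\cos(b)$; since $d(\theta)$ is small, $\cos(b)\ge\cos(d(\theta))>0$, so $\abs{\Re(u)}\ge\tfrac{\pi}{2}\cos(d(\theta))\,\sinh(\abs{a})$, which for $\abs{a}\ge a_0$ is bounded below both by a fixed positive constant and by $c\,e^{\abs{a}}$, while it stays bounded on the core region. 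The upper bound in~\eqref{eq:psi_growh} is then immediate: from $\abs{\cosh(z)}\le\cosh(\Re(z))\le e^{\abs{\Re(z)}}$ and $\abs{\sinh(z)}\le\cosh(\Re(z))\le e^{\abs{\Re(z)}}$, applied with $z=\tfrac{\sigma\pi}{2}\sinh(y)$ and $z=\tfrac{\pi}{2}\sinh(y)$, together with $\abs{\Re(\sinh(y))}\le\cosh(\Re(y))\le e^{\abs{\Re(y)}}$ and $\sigma\le1$, one gets $\abs{\psits(y)}\le\kappa(1+\theta)\exp\!\big(\tfrac{\pi}{2}e^{\abs{\Re(y)}}\big)$, i.e.~\eqref{eq:psi_growh} with $\gamma_2=\pi/2$.

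For the lower bound I would argue as follows in the far region. Put $R:=\Re(u)$, so $\abs{R}\ge1$ once $a_0$ is chosen large enough. If $\sigma=1$, expand $\psits(y)/\kappa=\cosh(u)+\ii\theta\sinh(u)=\tfrac{1+\ii\theta}{2}e^{u}+\tfrac{1-\ii\theta}{2}e^{-u}$; since $\abs{1+\ii\theta}=\abs{1-\ii\theta}=\sqrt{1+\theta^{2}}$, the reverse triangle inequality gives $\abs{\psits(y)}\ge\kappa\sqrt{1+\theta^{2}}\,\abs{\sinh R}\ge\tfrac14\kappa\sqrt{1+\theta^{2}}\,e^{\abs{R}}$. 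If $\sigma=1/2$, write $\psits(y)/\kappa=\cosh(u/2)+\ii\theta\sinh(u)$; here $\abs{\sinh(u)}\ge\abs{\sinh R}$ dominates $\abs{\cosh(u/2)}\le\cosh(\abs{R}/2)$, so $\abs{\psits(y)}\ge\kappa\big(\theta\abs{\sinh R}-\cosh(\abs{R}/2)\big)\ge\tfrac14\kappa\,e^{\abs{R}}$ for $\abs{R}$ large (uniformly in $\theta\ge1$). In both cases $e^{\abs{R}}\ge\exp(\gamma_{1}e^{\abs{\Re(y)}})$ by the first paragraph. On the core region $\abs{\psits(y)}$ is bounded below by a positive constant (continuity on a compact set and Lemma~\ref{lemma:psi_properties:analytic}, shrinking $d(\theta)$ slightly so the closed core set lies inside the domain of non-vanishing), while $\exp(\gamma_{1}e^{\abs{\Re(y)}})$ is bounded there; adjusting constants completes~\eqref{eq:psi_growh}.

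For~\eqref{eq:bound_dpsi} I would start from the closed-form expressions~\eqref{eq:psits_deriv_s1} and~\eqref{eq:psits_deriv_s12}. Since $\abs{\cosh(y)}\le\cosh(\Re(y))$, in both cases it suffices to bound the remaining factor by a constant times $\abs{\psits(y)}$. For $\sigma=1$ that factor is $\tfrac{\pi\theta}{2}\abs{\psi_{1,1/\theta}(-y)}$, and $\abs{\psi_{1,1/\theta}(-y)}\lesssim\abs{\psi_{1,\theta}(y)}$: in the far region both sides are $\sim e^{\abs{\Re(u)}}$ (with $\theta$-dependent constants) by the same exponential expansion as above, and on the core region both are continuous, bounded, and non-vanishing. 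For $\sigma=1/2$ the remaining factor is $\tfrac{\pi}{2}\big(\tfrac12\abs{\sinh(\tfrac{\pi}{4}\sinh y)}+\theta\abs{\cosh(\tfrac{\pi}{2}\sinh y)}\big)$, which in the far region is $\sim\theta\,e^{\abs{\Re(u)}}$ (the $\cosh$ term dominates, the $\sinh$ of $\tfrac14\sinh y$ being of lower order), matching the size of $\abs{\psith(y)}$ established above; the core region is handled by compactness once more.

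The step I expect to be the real obstacle is the lower bound on $\abs{\psits(y)}$ — and, relatedly, the comparison $\abs{\psi_{1,1/\theta}(-y)}\lesssim\abs{\psi_{1,\theta}(y)}$. One must rule out cancellation between $\cosh(\tfrac{\sigma\pi}{2}\sinh y)$ and $\ii\theta\sinh(\tfrac{\pi}{2}\sinh y)$, which in the delicate case $\sigma=\theta=1$ is exactly what is afforded by the coefficients $\tfrac{1\pm\ii\theta}{2}$ having equal, nonzero modulus; and one must glue the far-field estimate to the bounded core region, where only the qualitative non-vanishing of Lemma~\ref{lemma:psi_properties:analytic} is available and a slight shrinking of $d(\theta)$ is needed to make the relevant set compact inside the domain of analyticity.
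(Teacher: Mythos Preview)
Your argument is correct and follows the same overall architecture as the paper: split $\Dt$ into a far region $\abs{\Re(y)}\ge a_0$ where one exploits the exponential asymptotics of $\sinh$ and $\cosh$, and a bounded core region handled by compactness together with the non-vanishing of $\psits$ from Lemma~\ref{lemma:psi_properties:analytic}; the derivative bound is then read off from the closed forms~\eqref{eq:psits_deriv_s1} and~\eqref{eq:psits_deriv_s12}.

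The one tactical difference worth noting is your lower bound for $\sigma=1$. You write $\cosh(u)+\ii\theta\sinh(u)=\tfrac{1+\ii\theta}{2}e^{u}+\tfrac{1-\ii\theta}{2}e^{-u}$ and use that both coefficients have modulus $\tfrac{\sqrt{1+\theta^{2}}}{2}$, so the reverse triangle inequality gives $\abs{\psito(y)}\ge\kappa\sqrt{1+\theta^{2}}\,\abs{\sinh(\Re u)}$. The paper instead separates real and imaginary parts and uses the identity $(\cos b-\theta\sin b)^{2}+(\theta\cos b+\sin b)^{2}=1+\theta^{2}$ to obtain $\abs{\varphits(w)}^{2}\gtrsim\min(\cosh^{2}(\Re w),\sinh^{2}(\Re w))(1+\theta^{2})$. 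Both routes yield the same bound with the same constants; yours is arguably more direct and makes the role of $\theta$ (preventing cancellation between $e^{u}$ and $e^{-u}$) more transparent. For $\sigma=1/2$ and for~\eqref{eq:bound_dpsi} your arguments coincide with the paper's. Your remark about shrinking $d(\theta)$ to make the core set genuinely compact inside the domain of non-vanishing is a valid technical point that the paper leaves implicit.
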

\begin{proof}
  We start with the simple case $\boldsymbol{\sigma}\mathbf{=1}$. And focus on what happens if
  $\abs{\Re(y)} > y_0>0$ for a to be tweaked constant $y_0$. The  values with $0\leq \abs{\Re(y)}\leq y_0$ can
  be covered by adjusting $c_1$ and $c_2$, due to the compactness
  of the set $\{y \in \Dt: \abs{\Re(y)} \leq y_0\}$ and the fact that $\abs{\psits(y)}>0$
  by Lemma~\ref{lemma:psi_properties:analytic}. 
  We first only consider the  $y-w$ part of the transformation.
  We compute, since $\frac{1}{2}<\cos(t)\leq 1$ for $t \in (-1/2,1/2)$:
  \begin{align*}
    \abs{\Re(\sinh(y))}&= \abs{\sinh(\Re(y))} \cos(\Im(y)) \sim \abs{\sinh(\Re(y))}
                         \sim c_1 e^{\abs{\Re(y)}}.
  \end{align*}
  Easy calculation shows
  that for $h_1(\eta):=\abs{\cos(\eta) - \theta \sin(\eta)}$
  and $h_2(\eta):=\abs{\theta  \cos(\eta) + \sin(\eta)}$ it
  holds that $[h_1(\eta)]^2+[h_2(\eta)]^2=1+\theta^2$.
  Thus, for $w \in \Ht$
  with $\abs{\Re(w)}\geq 1$, we can calculate:
  \begin{equation}
    \label{eq:growth_phi}
  \begin{aligned}
    \abs{\cosh(w) + \ii \theta \sinh(w)}^2
    &= \abs{\cosh(\Re(w))}^2[h_1(\Im(w))]^2+ \abs{\sinh(\Re(w))}^2 [h_2(\Im(w))]^2  \\
    &\gtrsim  \min\big(\cosh(\Re(w)),\abs{\sinh(\Re(w))}\big)^2(1+\theta^2) 
      \;\gtrsim \; e^{2\abs{\Re(w)}}.
    \end{aligned}
  \end{equation}
  Overall, we see the lower bound of~(\ref{eq:psi_growh}).
  The upper bound is easily seen, as $\abs{\sinh(y)}$ and $\abs{\cosh(y)}$
  both grow  exponentially and the bound only depends on the real part of the argument.
  \eqref{eq:bound_dpsi} follows from~\eqref{eq:psits_deriv_s1} and the
  asymptotics \eqref{eq:growth_phi}and~\eqref{eq:psi_growh}.

  We now look at how to adapt the proof to \textbf{the case $\boldsymbol\sigma\mathbf{=1/2}$}.
  If $\abs{\Re(w)} \geq 4\ln(2)$, we get
  \begin{multline}
    \label{eq:psi_growth_w}
    \abs{\cosh(w/2) + \ii \theta \sinh(w)} \\
    \geq \theta \abs{\sinh(w)} - \abs{\cosh(w/2)} 
    \geq \frac{1}{2}\Big(e^{\abs{\Re(w)}} - 2 - e^{\abs{\Re(w)/2}}\Big)  
    \geq \frac{1}{4} e^{\abs{\Re(w)}}.
  \end{multline}
  The argument for the $y-w$-transformation stays the same. The upper bound
  also follows easily from the triangle inequality and the growth
  of $\sinh$ and $\cosh$.

  To see~(\ref{eq:bound_dpsi}),
  we combine  \eqref{eq:psits_deriv_s12} with
  the asymptotic estimate~\eqref{eq:psi_growth_w} to get
  \begin{align*}
    |{\psits'}(y)|
    &\lesssim
      e^{\pi\abs{\Re(\sinh(y))}/2} \cosh(\Re(y)) \lesssim |\psits(y)|\cosh(\Re(y)).  \qedhere
  \end{align*} 
\end{proof}

While on the full strip $\Dt$, the image of the  transformation is
difficult to study, the restriction to a certain subdomain is much better behaved.
\begin{lemma}
    \label{lemma:psi:properties:right_halfplane}
    For $\sigma=1$, there exists a constant $\delta>0$, depending on $\theta$,
    such that restricted to the domain $\Dexp$, 
    $\psito$ maps to a sector in the right-half plane,
    $$
    S_{\omega}:=\big\{z \in \C: \abs{\operatorname{Arg}(z)}\leq\omega  \big\}
    \qquad \text{with}  \quad \omega<\frac{\pi}{2}.
    $$
    For $\sigma=1/2$, and for all $\varepsilon >0$, there exists a constant
    $\delta>0$, depending on $\theta$ and $\varepsilon$, such that restricted to the domain
    $\Dexp$ the transformation $\psith$ maps to the sector $S_{\pi/2+\varepsilon}$.
    
    In both cases, there exist constants $c_1, c_2>0$ such that
    $\psits$ satisfies for all $\lambda >
    \lambda_0 > \kappa:$
    \begin{align}
      \label{eq:psi:properties:resolvent_right_halfplane}
      \abs{\psits(y)-\lambda}\geq c_1 \quad  \text{and}\quad
      \abs{\psits'(y)(\psits(y) - \lambda)^{-1}} \leq c_2 \cosh(\Re(y))
      \qquad \forall y \in \Dexp,
    \end{align}
    where $c_1, c_2$ only depend on $\lambda_0$ and $\theta$.
  \end{lemma}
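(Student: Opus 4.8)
The plan is to use the three-stage factorisation of Definition~\ref{def:planes}, $y\mapsto w:=\tfrac\pi2\sinh(y)\mapsto z:=\varphits(w)$, and to pin down $\delta$ in several steps. First take $\delta\le d(\theta)$ (so $\Dexp\subseteq\Dt$ and Lemmas~\ref{lemma:psi_properties:analytic}, \ref{lemma:psi_growth} and \ref{lemma:psi_hits_lambda} are available on $\Dexp$) and $\delta<\pi/2$, so that by Lemma~\ref{lemma:properties_of_sinh}(\ref{it:sinh_maps_dexp_to_strip}) the image of $\Dexp$ in the $w$-plane lies in the strip $\{|\Im w|\le\tfrac\pi2\delta\}$. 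Since $\Re(\sinh y)=\sinh(\Re y)\cos(\Im y)$ runs over all of $\R$, it is $\varphits$ on this full strip that must be controlled, and the remaining freedom in $\delta$ is the tool for that.

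For $\sigma=1$, write $w=a+\mathrm ib$ with $|b|\le\tfrac\pi2\delta$ and read off $\Re(z/\kappa)=\cosh(a)(\cos b-\theta\sin b)$ and $\Im(z/\kappa)=\sinh(a)(\sin b+\theta\cos b)$ from~\eqref{eq:hyperbola_re_im}. If $\tfrac\pi2\delta<\atan(1/\theta)$ then $\cos b-\theta\sin b\ge\cos(\tfrac\pi2\delta)-\theta\sin(\tfrac\pi2\delta)=:c_0>0$, so $\Re(z)>0$; using $|\tanh a|<1$ and $|\sin b+\theta\cos b|\le1+\theta$ then gives $|\operatorname{Arg} z|\le\arctan\!\big(\tfrac{1+\theta}{c_0}\big)=:\omega<\tfrac\pi2$ uniformly in $a$. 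For $\sigma=1/2$ the real part in~\eqref{eq:hyperbola_re_im} turns negative for large $|a|$, so I split: for $|a|\le a_0$ the curve $\{\varphi_{1/2,\theta}(a):|a|\le a_0\}$ is a compact subset of the open right half-plane (real part $\ge\kappa$), hence lies in some $S_{\pi/2-\eta}$, and by continuity a thin enough strip over this range maps into $S_{\pi/2-\eta/2}$; for $|a|>a_0$ one has $|\cosh(w/2)|\lesssim e^{|a|/2}=o(|\sinh w|)$, so $z/\kappa=\mathrm i\theta\sinh(w)\big(1+O(e^{-|a|/2})\big)$ with $\sinh(w)$ close to the real axis, whence $|\operatorname{Arg} z|$ lies within $O(e^{-a_0/2})+\arctan(\coth(a_0)\tan\tfrac\pi2\delta)$ of $\tfrac\pi2$; choosing $a_0$ large and then $\delta$ small makes this $<\tfrac\pi2+\varepsilon$.

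For~\eqref{eq:psi:properties:resolvent_right_halfplane}, by~\eqref{eq:bound_dpsi} it suffices to bound $|\psits(y)|\,|\psits(y)-\lambda|^{-1}$ from above, and since $|\psits(y)|\ge\kappa$ this also yields the lower bound on $|\psits(y)-\lambda|$; so the goal is $|1-\lambda/\psits(y)|\ge c_0$ for $y\in\Dexp$, $\lambda>\lambda_0$. This is immediate when $|\psits(y)|\le\lambda/2$ or $|\psits(y)|\ge2\lambda$, and the work is the band $\lambda/2\le|\psits(y)|\le2\lambda$. Here, for large $\lambda$, the double-exponential growth (Lemma~\ref{lemma:psi_growth}) forces $e^{|\Re y|}\gtrsim\ln(\lambda/\kappa)$, hence $|\Re w|$ large, hence by the arg computations above $|\operatorname{Arg}(\psits(y))|$ is close to $\atan\theta$ (for $\sigma=1$) resp. to $\tfrac\pi2$ (for $\sigma=1/2$), in particular bounded away from $0$; so $\lambda/\psits(y)$ stays off the positive real axis and $|1-\lambda/\psits(y)|\gtrsim1$. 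The same conclusion follows from Lemma~\ref{lemma:psi_hits_lambda}(\ref{it:psi_hits_lambda:viable_path}): the half-width $\delta e^{-|\Re y|}\lesssim\delta/\ln(\lambda/\kappa)$ of $\Dexp$ at such $y$ undercuts the $\sim1/\ln(\lambda/\kappa)$ distance of the preimages of $\lambda$ from $\R$ once $\delta$ is small (depending on $\theta$), so that lemma gives $|\psits(y)-\lambda|\gtrsim\lambda\sim|\psits(y)|$. The remaining bounded range $\lambda\in[\lambda_0,\Lambda_1]$ is handled by compactness of $\overline{\Dexp}\cap\{|\Re y|\le\rho_1\}$, on which $\psits$ omits the value $\lambda$ for $\delta$ small (again Lemma~\ref{lemma:psi_hits_lambda}).

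I expect the band $|\psits(y)|\sim\lambda$ to be the real obstacle: one must verify that $\Dexp$, at the scale where $|\psits|$ first reaches $\lambda$, is thin enough to avoid the finitely many preimages of $\lambda$, which is exactly the balance between the $e^{-|\Re y|}$ decay of the half-width of $\Dexp$ and the $\sim1/\ln\lambda$ separation of those preimages from $\R$ — this is why $\delta$ must shrink with $\theta$ (and with $\varepsilon$ when $\sigma=1/2$). A secondary, purely bookkeeping obstacle is fixing the thresholds $\delta,a_0,\Lambda_1,\rho_1$ in a consistent order so that all constants end up depending only on $\lambda_0$ and $\theta$.
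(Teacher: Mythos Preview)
Your proposal is correct, and for the $\sigma=1$ sector claim it is essentially the paper's argument verbatim. The genuine differences are in the other two parts.

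For the $\sigma=1/2$ sector claim, you split into $|a|\le a_0$ (compactness near the real axis) and $|a|>a_0$ (the asymptotic expansion $z/\kappa=\mathrm i\theta\sinh(w)(1+O(e^{-|a|/2}))$). The paper instead proves the single inequality $-\Re(\varphith(w))\le\tilde\varepsilon\,|\Im(\varphith(w))|$ directly, expanding both sides via the double-angle formula for $\sinh$; this avoids the case split and gives the sector in one stroke. Your route makes the limiting angle $\pi/2$ more visibly inevitable; the paper's is shorter.

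For the resolvent bounds~\eqref{eq:psi:properties:resolvent_right_halfplane}, the paper again takes a more direct path: away from a small disk around $w=0$ (handled by $|\varphits(w)|<q\lambda_0$ and continuity), it shows $|\Im(\varphits(w))|\ge c>0$ on the thin strip, which immediately gives $|\varphits(w)-\lambda|\ge c$ for \emph{every} real $\lambda$ simultaneously, with no split into bounded versus large $\lambda$. The second bound then follows from the asymptotic $|\Im(\psits)|\gtrsim|\psits|$ together with~\eqref{eq:bound_dpsi}. Your argument-based route (reduce to $|1-\lambda/\psits|\ge c_0$, then band analysis via the angle of $\psits$) is valid but carries exactly the bookkeeping burden you flag at the end. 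Two small caveats: your alternative appeal to Lemma~\ref{lemma:psi_hits_lambda}(\ref{it:psi_hits_lambda:viable_path}) is not quite on target---that item concerns a single horizontal line at height $d_\lambda\sim d(\theta)$, not the exponentially tapering $\Dexp$; your geometric observation (the half-width of $\Dexp$ undercuts the $\sim 1/\ln\lambda$ offset of the preimages) is correct, but turning distance-to-preimages into $|\psits-\lambda|\gtrsim\lambda$ requires Lemma~\ref{lemma:phi_keeps_distance} rather than Lemma~\ref{lemma:psi_hits_lambda}. And the claim $|\psits|\ge\kappa$ on $\Dexp$ is not established anywhere (Lemma~\ref{lemma:psi_growth} gives only an unspecified positive lower bound); this affects only the value of $c_1$, not the argument.
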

  \begin{proof}
    By Lemma~\ref{lemma:properties_of_sinh}(\ref{it:sinh_maps_dexp_to_strip}) it is sufficient
    to consider the mapping of $\varphits$ restricted to small strips around the real axis.
    We start with the simpler \textbf{case $\boldsymbol \sigma\mathbf{=1}$.}
    Going back to (\ref{eq:hyperbola_re_im}) and  writing
    $w:=\frac{\pi}{2} \sinh(y)=:a+\ii b$, we note that if $\abs{b}$ is sufficiently small, we can guarantee
    that ${\cos(b)- \theta\sin(b)} > c >0$
    for some constant $c >0$ depending on $\theta$.
  We have
  \begin{align*}
  \Re(\varphits(w))&=\kappa \cosh(a)\big(\cos(b)-\theta \sin(b)\Big) > c \,  \kappa \cosh(a), \\
  \abs{\Im(\varphits(w))}&=\kappa \abs{\sinh(a)}|\theta \cos(b)+ \sin(b)| \leq  (1+\theta) \kappa \abs{\sinh(a)}.
  \end{align*}
  Which easily gives
  $$
  0\leq\frac{\abs{\Im(\varphits(w))}}{\Re(\varphits(w))}\leq (1+\theta) c^{-1}
  \qquad \forall w\in \C,\; \abs{\Im(w)} \text{sufficiently small}. 
  $$

  Next, we show that \textbf{for} $\boldsymbol{\sigma=1/2}$, sufficiently thin strips in the $w$-domain are mapped to
  sectors with opening angle $\pi/2+\varepsilon$.
  Such sectors are characterized by 
  $$
  -\Re(\varphits(w)) \leq \widetilde{\varepsilon}
  \abs{\Im(\varphits(w))} \qquad \forall \abs{\Im(w)}<b_0(\widetilde{\varepsilon}) 
  $$
  for $\widetilde{\varepsilon} > 0$ depending on $\varepsilon$.
  The interesting case is $\Re(\varphits(w))<0$. There, we get for $\abs{b}\leq \pi$:
  \begin{align*}
    -{\Re(\varphits(w))}
    &=-\kappa \cosh(a/2)\cos(b/2)+\kappa \theta \cosh(a) \sin(b)  \leq \kappa  \theta \cosh(a) \abs{\sin(b)}.
  \end{align*}
  For the imaginary part, the double angle-formula gives:
  \begin{align*}
    \abs{\Im(\varphits(w))}
    &\geq \kappa \theta \abs{\sinh(a) \cos(b)} - \kappa \abs{\sinh(a/2)\sin(b/2)} \\
    &\geq \frac{\kappa \theta}{2}  \abs{\sinh(a) \cos(b)} + \kappa \abs{\sinh(a/2)} \big(\theta\cosh(a/2)\abs{\cos(b)} - \abs{\sin(b/2)}\big).
  \end{align*}
  For $\abs{b}$ sufficiently small, we get $\theta \cos(b) - \abs{\sin(b/2)} >0$, and thus
  the last term is non-negative. We conclude
  \begin{align*}
    -{\Re(\varphits(w))}
    &\leq 2\abs{\frac{\cosh(a)}{\sinh(a)}  \frac{\sin(b)}{\cos(b)}} \abs{\Im(\varphits(w))}.
  \end{align*}
  For $a>1$, the first term is uniformly bounded. By making $b$ sufficiently small, we can ensure
  $\abs{\sin(b)}/{\cos{b}}< \widetilde{\varepsilon}$.
  For $a<1$, we note that by restricting the values of $b$ it can be easily seen
  that $\Re(\varphits(w))\geq 0$. Thus we can conclude that $\psits$  maps to the stated sectors.

  Next, we prove the bounds on the distance to the real axis,
  again primarily investigating the behavior of $\varphits$ in thin strips.
  We focus on {$\boldsymbol{\sigma=1/2}$}.
  Since  $ \varphits(0)=\kappa < \lambda_0$  and $\varphits$ is continuous, there exist
  constants $0<q<1$ and $\widetilde{\delta}>0$ such that $\abs{\varphits(w)} < q\lambda_0$
  for all $\abs{w}<\widetilde{\delta}$.
  This gives
  $$
  \abs{ \varphits(w)-\lambda} \geq \lambda -  \abs{\varphits(w)} \geq \lambda_0 -\abs{\varphits(w)}
>(1-q)  \lambda_0  \qquad \forall \abs{w}\leq \widetilde{\delta}.
$$
By selecting $\delta < \widetilde{\delta}/2$ in the definition of $\Dexp$, we may then
continue by only considering $\abs{a}:=\abs{\Re(w)}> \widetilde{\delta}/2$.
  The imaginary part of $\varphith(y)$ satisfies:
  \begin{align*}
     \Im(\varphith(y))
    &=\kappa \sinh(a/2)\sin(b/2)+\kappa \theta \sinh(a)\cos(b)  \\
    &= \kappa \sinh(a/2)\Big(\sin(b/2) + 2\theta \cosh(a/2) \cos(b) \Big).
  \end{align*}
  For $\abs{b} \leq \pi/4$ we have $\sin(b/2) + 2\cos(b) > 0$
  and thus can conclude that $|{\Im(\varphith(y))}| \geq c > 0 $
  and in turn $|{\varphith(y) - \lambda}| \geq c > 0$.
  The case $\sigma=1$ follows similarly, but not using the double angle formula.
  
  To see that $\psits'(y)(\psits(y)-\lambda)^{-1}$  can also be uniformly bounded,
  we only need to focus on large values of $y$ (and therefore $w$).
  Asymptotically, we estimate for $\abs{b}<\pi/4$:
  \begin{align*}
    \abs{\Im(\psits(y))}
    &\geq
      -\kappa \abs{\sinh(\sigma a)}\abs{\sin(\sigma b)}
      +\kappa \theta \abs{ \sinh(a)\cos(b)}
      \gtrsim
      \abs{\sinh(a)}
      \!\!\stackrel{\text{\eqref{eq:growth_phi} or \eqref{eq:psi_growth_w}}}{\gtrsim} \!\! \abs{\psits(y)}.
  \end{align*}
 Using (\ref{eq:bound_dpsi}) then concludes the proof.
%
\end{proof}

In order to apply the double exponential formulas for the Riesz-Dunford calculus,
it is important to understand where $\psits(z)$ hits the real line. We start with
the $w$-domain.
\begin{lemma}
  \label{lemma:psi_hits_lambda_w_domain}
  Fix $\lambda \geq \lambda_0 > 1$. Then the following holds
  for every  $w\in \C$ with $\Re(w)\neq 0$ and
    \begin{align}
      \label{eq:pole_in_w_domain}
    \cosh(\sigma w) + \ii \theta  \sinh(w) = \lambda:
    \end{align}
  \begin{enumerate}[(i)]
  \item
    \label{it:psi_hits_lambda_w_domain:bounds}
    There exist  constants $c_1,c_2,c_3>0$ such that $w$ satisfies
    \begin{align*}
        \log(\lambda) - c_1 \leq \abs{\Re(w)}
        &\leq \log(\lambda) + c_1 \quad \text{and}\quad
          \abs{\Im(w)} \geq
          \begin{cases}
            \atan(\theta) & \text{if }\sigma=1 \\
            \max\big(\frac{\pi}{2} - \frac{c_2}{\theta \sqrt{\lambda}},c_3) &  \text{if } \sigma=1/2
          \end{cases},
    \end{align*}
    where $c_1$ depends on $\lambda_0$ and $\theta$, $c_2$ depends on $\lambda_0$, and
    $c_3$ depends on $\theta$.
  \item
    \label{it:psi_hits_lambda_w_domain:number}
    Given $0<r<R$, 
    the number  $N_w(\lambda,r, R)$ of points $w$ satisfying (\ref{eq:pole_in_w_domain})
    with $r\leq \abs{\Im(w)}\leq R$  is  bounded uniformly in $\lambda$
    by
    $$
    N_w(\lambda,r,R) \leq C \abs{R-r}
    $$
    The constant $C$ depends only on $\theta$.  
\item
  \label{it:psi_hits_lambda_w_domain:distance}
  There exist at most four values $p_1,\dots,p_4$ depending on $\lambda$, $\theta$, and $\sigma$
  such that all points satisfying~(\ref{eq:pole_in_w_domain}) can be written
  as
  \begin{align}
    \label{eq:poles_w_explicit}
    w=p_j +   \frac{2\ell}{\sigma} \pi \ii
    \qquad \text{ for some $\ell \in \Z$, $j\in \{1,4\}$.}
  \end{align}
  If $w$ solves \eqref{eq:pole_in_w_domain} then $-\overline{w}$ does as well.
  %
%
%
  \end{enumerate}
\end{lemma}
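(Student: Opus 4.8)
The plan is to treat the two cases $\sigma=1$ and $\sigma=1/2$ side by side, deriving everything from the algebraic form of~\eqref{eq:pole_in_w_domain} together with the growth estimates already available in the proof of Lemma~\ref{lemma:psi_growth}. Write $w=a+\ii b$ with $a\neq 0$. Since $\sinh$ and $\cosh$ both have imaginary period $2\pi\ii$, the map $w\mapsto\cosh(\sigma w)+\ii\theta\sinh(w)$ is invariant under $w\mapsto w+\tfrac{2}{\sigma}\pi\ii$; this already produces the translation structure in~\eqref{eq:poles_w_explicit} once the base solutions in a fundamental strip are counted. The symmetry $w\mapsto -\overline{w}$ is immediate: conjugating~\eqref{eq:pole_in_w_domain} and using that $\cosh$ is even and $\sinh$ odd turns it into the same equation for $-\overline{w}$, using $\overline{\lambda}=\lambda$.

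For the bound on $\abs{\Re(w)}$ in part~(\ref{it:psi_hits_lambda_w_domain:bounds}) I combine an upper and a lower bound on $\abs{\cosh(\sigma w)+\ii\theta\sinh(w)}$. The upper bound $\abs{\cosh(\sigma w)+\ii\theta\sinh(w)}\le(1+\theta)\cosh(a)$ is the triangle inequality together with $\abs{\cosh(a+\ii b)},\abs{\sinh(a+\ii b)}\le\cosh(a)$. The lower bound $\abs{\cosh(\sigma w)+\ii\theta\sinh(w)}\gtrsim e^{\abs{a}}$ for $\abs{a}$ large is precisely \eqref{eq:growth_phi} for $\sigma=1$ and \eqref{eq:psi_growth_w} for $\sigma=1/2$, both of which hold for generic $w$, not only $w\in\Ht$. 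Setting the modulus equal to $\lambda$ and taking logarithms yields $\log(\lambda)-c_1\le\abs{a}\le\log(\lambda)+c_1$ with $c_1=c_1(\lambda_0,\theta)$; in particular $\abs{a}$ exceeds any prescribed threshold once $\lambda_0$ is large enough, which legitimises the use of the asymptotic estimates.

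For the imaginary part and the base solutions I split by $\sigma$. If $\sigma=1$, separating~\eqref{eq:pole_in_w_domain} into real and imaginary parts via~\eqref{eq:hyperbola_re_im} gives $\cosh(a)(\cos b-\theta\sin b)=\lambda$ and $\sinh(a)(\sin b+\theta\cos b)=0$; since $a\neq 0$ this forces $\tan b=-\theta$, hence $\cos b-\theta\sin b=\pm\sqrt{1+\theta^2}$, and only the $+$ sign is compatible with $\lambda>0$, so $b\equiv-\atan(\theta)\pmod{2\pi}$ and $\cosh(a)=\lambda/\sqrt{1+\theta^2}$. This gives $\abs{b}\ge\atan(\theta)$ and exhibits the at most two base solutions $p_{1,2}=\pm\acosh(\lambda/\sqrt{1+\theta^2})-\ii\,\atan(\theta)$, all others obtained by adding $2\pi\ii\Z=\tfrac{2}{\sigma}\pi\ii\Z$. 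If $\sigma=1/2$, the double-angle identity $\sinh(w)=2\sinh(w/2)\cosh(w/2)$ rewrites~\eqref{eq:pole_in_w_domain} as $\cosh(w/2)\big(1+2\ii\theta\sinh(w/2)\big)=\lambda$; with $s:=\sinh(w/2)$ and $\cosh^2(w/2)=1+s^2$ this becomes the quartic $(1+s^2)(1+2\ii\theta s)^2=\lambda^2$, which has at most four roots $s_1,\dots,s_4$. None of the degenerate values $s=\tfrac{\ii}{2\theta}$ or $s=\pm\ii$ is a root (each makes the left-hand side $0\neq\lambda^2$), so $1+2\ii\theta s_j\neq 0$ and $1+s_j^2\neq 0$; hence the constraint $\cosh(w/2)=\lambda/(1+2\ii\theta s_j)$ selects exactly one of the two preimages of $s_j$ under $\sinh$ modulo $2\pi\ii$, which pins down $w/2$ modulo $2\pi\ii$ and therefore $w$ modulo $4\pi\ii=\tfrac{2}{\sigma}\pi\ii$ to one of at most four values — these are the $p_j$. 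For the imaginary-part bound, taking the imaginary part of $\cosh(w/2)\big(1+2\ii\theta\sinh(w/2)\big)=\lambda$ (with $\Im\cosh(w/2)=\sinh(a/2)\sin(b/2)$ and $\Re\sinh(w)=\sinh(a)\cos b$) and dividing by $\sinh(a/2)\neq0$ gives $\cos b=-\sin(b/2)/\big(2\theta\cosh(a/2)\big)$; since $\cosh(a/2)\ge1$ and $\theta\ge1$ this forces $\abs{\cos b}\le\tfrac12$, hence $\abs{b}\ge\pi/3=:c_3$, and inserting $\cosh(a/2)\ge c\sqrt{\lambda}$ from the $\abs{\Re(w)}$-bound gives $\abs{\cos b}\le c_2'/(\theta\sqrt{\lambda})$, hence $\abs{b}\ge\pi/2-c_2/(\theta\sqrt{\lambda})$.

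Part~(\ref{it:psi_hits_lambda_w_domain:number}) is then a consequence of~(\ref{it:psi_hits_lambda_w_domain:distance}): the solution set lies in $\bigcup_{j\le4}\{p_j+\tfrac{2\ell}{\sigma}\pi\ii:\ell\in\Z\}$, and for each $j$ the number of $\ell$ with $r\le\abs{\Im(p_j)+\tfrac{2\ell}{\sigma}\pi}\le R$ is at most $\tfrac{\sigma}{\pi}(R-r)+2$; summing over the at most four families bounds $N_w(\lambda,r,R)$ by $C(1+\abs{R-r})$ with $C$ depending only on $\theta$, uniformly in $\lambda$ (the uniformity being the essential content, and coming entirely from the fixed number of arithmetic progressions). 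I expect the main obstacle to be the $\sigma=1/2$ case of~(\ref{it:psi_hits_lambda_w_domain:distance}): carrying out the quartic reduction cleanly, verifying that the degenerate values of $s$ are genuinely excluded, and correctly bookkeeping which branch of $\asinh$ is picked out by the $\cosh$-constraint so that one really lands at $\le 4$ and not $\le 8$ base points. Everything else reduces to the triangle inequality, the growth bounds~\eqref{eq:growth_phi}--\eqref{eq:psi_growth_w}, and elementary trigonometry.
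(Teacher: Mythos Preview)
Your proposal is correct and follows essentially the same route as the paper: explicit separation of real/imaginary parts for $\sigma=1$ (giving \eqref{eq:w_pos_of_pole}), and reduction to a quartic in $\sinh(w/2)$ for $\sigma=1/2$. A few places where your write-up differs slightly from the paper are worth flagging. For the bound on $\abs{\Re(w)}$ you invoke the growth estimates \eqref{eq:growth_phi} and \eqref{eq:psi_growth_w} directly, whereas the paper redoes a self-contained triangle-inequality argument; both are fine, but note that those estimates require $\abs{a}$ above an explicit threshold, so you still need the paper's one-line observation that small $\abs{a}$ forces $\ln(\lambda)$ bounded. For the $\sigma=1/2$ imaginary-part bound, your argument via $\cos b=-\sin(b/2)/\bigl(2\theta\cosh(a/2)\bigr)$ is cleaner than the paper's, which instead solves the quadratic \eqref{eq:poles_eq_for_sinb} for $\sin(b/2)$ and expands asymptotically; the conclusions agree. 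For part~(\ref{it:psi_hits_lambda_w_domain:distance}) with $\sigma=1/2$, you are in fact \emph{more} careful than the paper: the paper squares and obtains the quartic \eqref{eq:psi_hits_lambda_w_domain:quartic_eqn} but does not spell out why each root $t_j$ yields only one, not two, base solutions modulo $4\pi\ii$; your observation that the unsquared equation fixes $\cosh(w/2)=\lambda/(1+2\ii\theta s_j)$ and hence selects a single branch is exactly the missing sentence. Finally, your bound $N_w\le C(1+\abs{R-r})$ in part~(\ref{it:psi_hits_lambda_w_domain:number}) is the honest statement; the paper's $C\abs{R-r}$ is only used downstream in situations where $\abs{R-r}$ is bounded below, so the discrepancy is harmless.
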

\begin{proof}
  We start with the simpler case $\sigma=1$.
  By separating the real and imaginary part as in~\eqref{eq:hyperbola_re_im}, we can observe that the
  critical points $w=a+\ii b$ with $a\neq 0$ are located at
  \begin{align}
    \cosh(a)&= \frac{\lambda}{\sqrt{1+\theta^2}},
              \qquad b=-\atan(\theta) + 2\ell \pi , \quad \ell \in \Z.
              \label{eq:w_pos_of_pole}
  \end{align}
  This implies that $\abs{a} \sim \ln(\lambda)$, and we also see that
  for each $\ell$, there are two such points, one in each half-plane. 
  All the statements follow easily. Note that in (\ref{it:psi_hits_lambda_w_domain:distance})
  only two families are needed.

  For the remainder of the proof we therefore focus on the case $\sigma=1/2$.
  \textbf{Ad~(\ref{it:psi_hits_lambda_w_domain:bounds}):}
  We start with the bound on the real part and write
  $w=a+\ii b$.
  We note that if $\abs{a} > \max\big(1,2\ln\big(\frac{8}{ \theta}\big)\big)$
  one can estimate using elementary considerations that
  $e^{\abs{a}}/4 \leq \abs{\sinh(w)}$ and
  $e^{\abs{a}/2}/\theta\leq e^{\abs{a}}/8$.

  We then calculate:
  \begin{align*}
    \frac{e^{\abs{a}}}{4}
    &\leq \abs{\sinh(w)}
      =\frac{1}{\theta} {\abs{\lambda - \cosh(w/2)}}
    \leq \frac{\lambda}{\theta} + \frac{e^{\frac{\abs{a}}{2}}}{\theta} 
    \leq \frac{\lambda}{\theta} + \frac{e^{\abs{a}}}{8}.
  \end{align*}
  From this, the statement readily follows.
  The other direction is shown similarly:
  \begin{align*}
    e^{a}&\geq \abs{\sinh(w)}
           =\frac{1}{\theta} {\abs{\lambda - \cosh(w/2)}}
           \geq \frac{\lambda}{\theta} - \frac{e^{\frac{\abs{a}}{2}}}{\theta}
           \geq \frac{\lambda}{\theta} - \frac{e^{\abs{a}}}{8}.
  \end{align*}
  For $\abs{a} \leq \max\big(1,2\ln\big(\frac{8}{ \theta}\big)\big)$, we use the bound
  $\abs{\varphits(w)}\lesssim e^{\abs{a}}$ to see that  $\ln(\lambda)$ must be uniformly bounded.
  The final bound on the real part of $w$ then follows for
  $c_1:=\max\big(\ln(8/\theta),\ln(9\theta/8),\widetilde{c}\big)$, where $\widetilde{c}$ is used to compensate for the case of small $a$.

  Looking at the imaginary part of equation~\eqref{eq:pole_in_w_domain}, we get
  from the double-angle formulas
  \begin{align}
    0&=\sinh(a/2)\sin(b/2) + \theta \sinh(a)\cos(b)  \nonumber\\
     &=\sinh(a/2)\Big(\!\sin(b/2)+2\theta \cosh(a/2) \big(1-2\sin^2(b/2)\big)\!\Big).
       \label{eq:condition_pole_in_w_domain_imag}
  \end{align}
  Since we assume $a\neq 0$, we get
  by substituting $\tau:=\sin(b/2)$
  \begin{align}
    0=\tau+2\theta \cosh(a/2) (1-2\tau^2) \qquad \text{ or }\qquad
    \tau=\frac{1 \pm \sqrt{1+32 \theta^2 \cosh^2(a/2)}}{8 \theta \cosh(a/2)}.
        \label{eq:poles_eq_for_sinb}
  \end{align}
  From this, using the asymptotic behavior
  $$
  \tau = \pm \frac{1}{\sqrt{2}} + \bigO\Big(\frac{1}{\theta \cosh(a/2)}\Big)
  \quad \text{ and } \quad
  \asin\Big(\frac{1}{\sqrt{2}} + h\Big) = \frac{\pi}{4} + \bigO(h)
  $$
  the statement follows for large $a$, since $b=2\asin(\tau)$ and $\cosh(a/2)\gtrsim \sqrt{\lambda}$
  by (\ref{it:psi_hits_lambda_w_domain:bounds}).
  For small $a$, we note that~\eqref{eq:poles_eq_for_sinb}
  shows that $\Im(w)\neq 0$. By continuity, it
  must therefore hold that $\abs{\Im(w)}>0$ uniformly.

  \textbf{Ad~(\ref{it:psi_hits_lambda_w_domain:number}) and (\ref{it:psi_hits_lambda_w_domain:distance}):}
  We square the defining equation~\eqref{eq:pole_in_w_domain}, getting
  \begin{align}
    \lambda^2
    &=\cosh^2(w/2)+2\ii \theta \cosh(w/2)\sinh(w)-\theta^2 \sinh^2(w) \nonumber\\
    &=\cosh^2(w/2)+4\ii \theta \cosh^2(w/2)\sinh(w/2)-4\theta^2 \sinh^2(w/2)\cosh^2(w/2).
  \end{align}
  Writing $\cosh^2(w/2)=1+\sinh^2(w/2)$, we get that
  $t:=\sinh(w/2)$ solves the quartic equation
  \begin{align}
    \lambda^2
    &=1+t^2+4\ii \theta (1+t^2)t-4\theta^2 t^2 - 4\theta^2 t^4.
      \label{eq:psi_hits_lambda_w_domain:quartic_eqn}
  \end{align}  
  This means there
  can be at most 4 such values $t_{0}, \dots t_4$ for any $\lambda$ 
  and it must hold that
  \begin{align*}
    \sinh(w/2)&=t_j \quad  \text{or }  \quad w=w_j + 4\pi \ell \ii  
  \end{align*}
  Here $w_j$ is the solution with $\Re(w_j)>0$ and minimal value of $\abs{\Im(w_j)}$.
  (\ref{it:psi_hits_lambda_w_domain:distance}) and (\ref{it:psi_hits_lambda_w_domain:number}) follow.
  The fact that $-\overline{w_j}$ also solves~\eqref{eq:pole_in_w_domain} follows by conjugating both sides of
  the equation~\eqref{eq:psi_hits_lambda_w_domain:quartic_eqn}.
\end{proof}

Next, we show that points which have positive distance to the poles in the
$w$-plane are mapped to points with distance $\lambda$ in the $z$-plane.
Note that in the following Lemma we include some additional points in order to avoid distinguishing more cases. We also exclude most of the imaginary axis, as for small values of $\lambda$ it might
contain poles which are structurally different than the ones involving large $\lambda$.
\begin{lemma}
  \label{lemma:phi_keeps_distance}
  Define
  $$b_0:=\max\big\{b\geq 0: \quad \abs{\varphits(\ii \tau)}\leq (\lambda_0 + \kappa)/2 \quad\forall \abs{\tau}\leq b\big\}.$$
  Fix $\lambda \geq \lambda_0 > \kappa$ and
  define the set
  \begin{align*}
    \mathcal{M}_{\lambda}
    &:=\Big\{ p_\lambda + i \ell \pi, \; \text{with }\; p_\lambda \in \C \text{ such that } \varphits(p_\lambda) = \lambda
      \text{ and }\ell \in \Z \Big\}
      \cup \{ \ii b : \; \abs{b}\leq b_0 \}.
  \end{align*}
  For any $\delta > 0$, there exists a constant $c(\delta)>0$, depending only on $\delta$ and $\theta$, such that 
  for all $w \in \C$ with $\operatorname{dist}(w,\mathcal{M}_\lambda)> \delta$ we can estimate
  $$
  \abs{ \varphits(w) - \lambda} \geq c(\delta) \lambda.
  $$
\end{lemma}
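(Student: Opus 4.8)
The plan is to compare $\varphits(w)$ with $\lambda$ according to the size of $\abs{\Re(w)}$ relative to $L:=\ln(\lambda/\kappa)\ (\ge\ln(\lambda_0/\kappa)>0)$, and to bring in the distance hypothesis only inside the narrow ``resonance band'' where $\abs{\varphits(w)}$ is comparable to $\lambda$. Two elementary facts are used throughout. First, since $\sigma\le1$, the computations in the proof of Lemma~\ref{lemma:psi_growth} (which only ever estimate in terms of $\Re(w)$) give the two–sided bound $c_\ast\kappa e^{\abs{\Re(w)}}\le\abs{\varphits(w)}\le(1+\theta)\kappa e^{\abs{\Re(w)}}$ for $\abs{\Re(w)}\ge a_\ast$, and $\abs{\varphits(w)}\le(1+\theta)\kappa e^{\abs{\Re(w)}}$ for all $w$, with $c_\ast,a_\ast>0$ depending only on $\theta$ (coming from~\eqref{eq:growth_phi} resp.~\eqref{eq:psi_growth_w}). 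Second, $\varphits(-\overline w)=\overline{\varphits(w)}$, so $\abs{\varphits(-\overline w)-\lambda}=\abs{\varphits(w)-\lambda}$ and $\mathcal M_\lambda$ is invariant under $w\mapsto-\overline w$; hence we may assume $\Re(w)\ge0$. Finally, $\varphits$ is $iP$–periodic with $P:=2\pi/\sigma$, and $\mathcal M_\lambda$, apart from the segment $\{\ii b:\abs b\le b_0\}$, is $i\pi$–periodic, hence $iP$–periodic.

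Set $A:=\max\{a_\ast,\ln(2/c_\ast)\}$ and $B:=\ln\!\big(2(1+\theta)\big)$. If $\Re(w)\ge L+A$ then $\abs{\varphits(w)}\ge c_\ast\kappa e^{L+A}\ge2\kappa e^{L}=2\lambda$, and if $0\le\Re(w)\le L-B$ then $\abs{\varphits(w)}\le(1+\theta)\kappa e^{L-B}=\lambda/2$; in either case $\abs{\varphits(w)-\lambda}\ge\lambda/2$, with no distance hypothesis needed. It remains to treat the resonance band $\mathcal Z_\lambda:=\{w:\Re(w)\in[\max\{L-B,0\},\,L+A]\}$, a vertical strip of $\lambda$–independent width $A+B$; note that the genuine preimage set $\varphits^{-1}(\lambda)$, which by Lemma~\ref{lemma:psi_hits_lambda_w_domain}(\ref{it:psi_hits_lambda_w_domain:distance}) is contained in $\mathcal M_\lambda$, lies in $\mathcal Z_\lambda$ up to a bounded error (Lemma~\ref{lemma:psi_hits_lambda_w_domain}(\ref{it:psi_hits_lambda_w_domain:bounds}) and~\eqref{eq:w_pos_of_pole}).

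For $\lambda$ in a bounded range $[\lambda_0,\lambda_1]$ (with $\lambda_1$ fixed below) we reduce $\Im(w)$ modulo $iP$, so $w$ ranges over a fixed compact set; a standard compactness argument — using that $\varphits^{-1}(\lambda)\subseteq\mathcal M_\lambda$ varies continuously in $\lambda$ (so that $(w,\lambda)\mapsto\operatorname{dist}(w,\mathcal M_\lambda)$ is lower semicontinuous and the relevant set is closed) together with continuity and non-vanishing of $w\mapsto\varphits(w)-\lambda$ off $\mathcal M_\lambda$ — shows $\abs{\varphits(w)-\lambda}/\lambda$ has a positive minimum over $\{(w,\lambda):\lambda\in[\lambda_0,\lambda_1],\ w\in\mathcal Z_\lambda,\ \operatorname{dist}(w,\mathcal M_\lambda)\ge\delta\}$; this is $c(\delta)$ on that range, and the non-periodic padding segment $\{\ii b:\abs b\le b_0\}$ (relevant only for $\abs{\Re(w)}\lesssim1$) is absorbed here. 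For $\lambda\ge\lambda_1$ we rescale: put $u:=w-L$, so $\Re(u)\in[-B,A]$ and, by periodicity, $\Im(u)\in[0,P]$; call this box $K$. Since $\lambda=\kappa e^{L}$ and the subdominant $\cosh$–term is $O(e^{-L/2})$,
\begin{align*}
  \frac{\varphits(u+L)}{\lambda}\ \longrightarrow\ 1+F_\sigma(u)\ \text{ uniformly on }K,\qquad
  F_1(u):=\tfrac12 e^{u}(1+\ii\theta)-1,\quad F_{1/2}(u):=\tfrac{\ii\theta}{2}e^{u}-1 .
\end{align*}
The limit $F_\sigma$ is explicit and $\lambda$–independent, with a finite zero set $Z_\sigma\subset K$; and by Lemma~\ref{lemma:psi_hits_lambda_w_domain}(\ref{it:psi_hits_lambda_w_domain:bounds})–\eqref{eq:w_pos_of_pole} each point of $Z_\sigma$ is the $\lambda\to\infty$ limit of (the reduction of) a genuine preimage $p_\lambda\in\varphits^{-1}(\lambda)\subseteq\mathcal M_\lambda$, the defining leading‑order equation being the same. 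Consequently $\operatorname{dist}(w,\mathcal M_\lambda)>\delta$ forces, after translation and reduction, $\operatorname{dist}(u,Z_\sigma)\ge\delta-o(1)\ge\delta/2$ for $\lambda$ large; fixing $\lambda_1$ so large that the $o(1)$ terms above are $\le\tfrac12\min\{\abs{F_\sigma(u')}:\operatorname{dist}(u',Z_\sigma)\ge\delta/2\}$, we get
\begin{align*}
  \abs{\varphits(w)-\lambda}=\lambda\,\abs{F_\sigma(u)+o(1)}\ \ge\ \tfrac12\lambda\,\min\big\{\abs{F_\sigma(u')}:\operatorname{dist}(u',Z_\sigma)\ge\delta/2\big\}=:c'(\delta,\theta)\,\lambda .
\end{align*}
Taking $c(\delta)$ to be the minimum of $\tfrac12$, $c'(\delta,\theta)$ and the constant from the bounded range (which, like $\lambda_1$, also depends on the fixed data $\kappa,\lambda_0$) completes the proof.

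The main obstacle is the rescaling step: one must check that the genuine $w$–plane preimage of $\lambda$ inside the resonance band is asymptotic to $u_\ast+L$ for each $u_\ast\in Z_\sigma$ — so that ``$\delta$‑far from $\mathcal M_\lambda$'' becomes ``$\gtrsim\delta$‑far from a zero of the explicit limit'' — which is precisely where Lemma~\ref{lemma:psi_hits_lambda_w_domain} is needed, together with a Hurwitz–type stability of zeros under the uniform convergence $\varphits(\cdot+L)/\lambda\to1+F_\sigma$. A secondary, purely bookkeeping nuisance is the compact regime: one has to deal with the non‑periodic padding segment $\{\ii b:\abs b\le b_0\}$ and with the (lower) semicontinuity of $(w,\lambda)\mapsto\operatorname{dist}(w,\mathcal M_\lambda)$ as the structure of $\varphits^{-1}(\lambda)$ varies.
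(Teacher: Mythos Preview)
Your approach is correct and genuinely different from the paper's. Both proofs share the same outer structure: a size comparison disposes of $w$ with $\abs{\Re(w)}$ far from $\ln(\lambda/\kappa)$, a compactness argument handles bounded $\lambda$, and the real work lies in the ``resonance band'' for large $\lambda$. The paper treats this last regime by explicit case analysis: it splits according to whether $\Im(w)$ is close to the imaginary part of some pole, and in each case estimates either $\Re(\varphits(w)-\lambda)$ or $\Im(\varphits(w)-\lambda)$ directly from~\eqref{eq:hyperbola_re_im}, with separate and rather laborious computations for $\sigma=1$ and $\sigma=1/2$. Your rescaling $u=w-L$ and passage to the explicit limit $1+F_\sigma$ collapses all of this into a single soft argument: the zeros of $F_\sigma$ are limits of translated preimages (this is the Hurwitz step, or equivalently a direct comparison with Lemma~\ref{lemma:psi_hits_lambda_w_domain}), so the distance hypothesis transfers, and compactness on the fixed box $K$ finishes. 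This buys you a uniform treatment of both values of $\sigma$ and a much shorter proof, at the cost of an inexplicit constant $c(\delta)$.

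Two minor points on your bounded-$\lambda$ compactness step. First, you only need the distance to the \emph{preimage part} of $\mathcal M_\lambda$, which is $iP$-periodic, so the reduction $\Im(w)\bmod P$ causes no trouble; the non-periodic segment $\{\ii b:\abs b\le b_0\}$ can simply be dropped here since $\varphits^{-1}(\lambda)$ already sits inside the preimage part. Second, the constraint set $\{(w,\lambda):\operatorname{dist}(w,\varphits^{-1}(\lambda))\ge\delta\}$ is closed because $\varphits$ is an open map (holomorphic, non-constant): if it failed, a nearby preimage of the limiting $\lambda_0$ would have nearby preimages of the approximating $\lambda_n$, contradicting the distance bound. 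With these details made explicit, your argument is complete.
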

\begin{proof}
  We first deal with the issues close to the imaginary axis.
  Since $\abs{\varphits(\ii b)}\leq(\lambda_0+\kappa)/2$ if $\abs{b}\leq b_0$, we can find
  $\varepsilon \in (0,\delta/2)$ such that 
  \begin{align*}
    \abs{\varphits(\widetilde{w})} < 3\lambda_0/4 + \kappa/4
    \quad \text{for all} \quad
    \widetilde{w} \in U_{\varepsilon}
    &:=\big\{
    \abs{\Re(\widetilde{w})} \leq \varepsilon \; \text{and}\;\abs{\Im(\widetilde{w})} \leq b_0 \big\}.
  \end{align*}
  If $\abs{\Re(w)}\leq \varepsilon$, the distance condition to $\mathcal{M}_{\lambda}$ implies
  $\abs{\Im(w)} \leq b_0 - \delta/2$ and thus $w \in U_{\varepsilon}$.
  We can therefore calculate:
  \begin{align*}
    \abs{\varphits(w)-\lambda} \geq \lambda - \abs{\varphits(w)}
    \geq \frac{\lambda}{4}\Big(1 - \frac{\kappa}{\lambda_0}\Big).
  \end{align*}
  Next, we deal with  small values of $\lambda$.
  For constants $\Lambda$, $C_w$ to be fixed later,
  consider $\lambda \in [\lambda_0, \Lambda]$
  and define the set
  $$\mathcal{R}:=\{w \in \C: \; \abs{\Re(z)}\geq \varepsilon, \abs{w} \leq C_w\}.$$
  We show that the stated bound holds for $w \in \mathcal{R}$.
  By the Lemma~\ref{lemma:psi_hits_lambda_w_domain}(\ref{it:psi_hits_lambda_w_domain:distance}), 
  the points of $\mathcal{M}_\lambda$ can be written as
  $$
  \mathcal{M}_\lambda=\Big\{ p_{j} +  \pi \ell  \ii , \;\; \ell \in \Z,\; j=1,\dots,4 \Big\}
  \cup  \Big \{ \ii b : \; \abs{b}\leq b_0 \Big\}
  $$
  for some reference points $p_{j} \in \C$, w.l.o.g. $\abs{\Im(p_j)}\leq \pi$.
  We introduce set
  $$
  \mathcal{M}_{\Lambda,C_w}:=\Big\{p_{j} + \ell \pi \ii, \; \abs{\ell} \leq L, \, j=1,\dots,4 \Big\}
  $$
  where $L:=\lceil (C_w+1)/\pi\rceil$ is taken large enough that for all $\lambda \in [\lambda_0, \Lambda]$,
  it holds that
  $$
  \mathcal{M}_{\lambda} \cap \mathcal{R} \subseteq \mathcal{M}_{\Lambda,C_w},
  $$
  i.e., it contains all the poles of size less or equal than $C_w$ uniformly in $\lambda$
  (but possibly also some larger ones).  
  We consider the map
    $$
    \Phi(\lambda, w):= \big(\varphits(w) - \lambda\big)^{-1}\prod_{p_\lambda \in \mathcal{M}_{\Lambda,C_w}}{(w - p_\lambda)}.
    $$
    By the inverse function theorem, the points  $p_{j}=\varphits^{-1}(\lambda)$ depend continuously on $\lambda$ since
    $\varphits'\neq 0$ away from the imaginary axis (where we stay away from by construction). Thus, also the other points
    $p_{\lambda}$ depend continuously on $\lambda$.
    Similarly, the denominator only has simple zeros for $w\in \mathcal{M}_\lambda$. Since, in that case the numerator also vanishes one can argue that $\Phi$ has a  continuous
    extension to $[\lambda_0, \Lambda]\times \mathcal{R}$ which is bounded, i.e., it holds
    $$
    \abs{\frac{\prod_{p_\lambda \in \mathcal{M}_{\Lambda,C_w}}{(w - p_\lambda)}}{\varphits(w) - \lambda}} \leq C(\Lambda,C_W)
    \qquad \text{or}\qquad 
    \abs{\varphits(w) - \lambda} \geq \frac{1}{C(\Lambda,C_W)} \prod_{p_{\lambda} \in \mathcal{M}_{\Lambda,C_w}}{\abs{w - p_\lambda}}.
    $$
    Thus, if $w$ is separated from $\mathcal{M}_\lambda$ and the imaginary axis,
    so is $\varphits(w)$ from $\lambda$.
    If $\lambda \in [0,\Lambda]$ and $\abs{w} > C_w:=\max(\log(2\Lambda/c_1),4\ln(2))$,
    (where $c_1$ is the constant in \eqref{eq:growth_phi} or \eqref{eq:psi_growth_w})
    we get:
    $$
    \abs{\varphits(w)-\lambda} \geq \abs{\varphits(w)}-\lambda
    \quad\;\stackrel{\mathclap{\eqref{eq:growth_phi}, \eqref{eq:psi_growth_w}}}{\geq}\quad\;
    c_1 e^{\abs{\Re(w)}} - \Lambda 
    \geq \Lambda \geq\lambda.
    $$
    We therefore may from now on assume that $\lambda$ is sufficiently large as we see fit.
    In preparation for the rest of the proof, we note that for $\zeta, \mu \in \R$,
    w.l.o.g., $\abs{\zeta}\leq \abs{\mu}$:
    \begin{align}
      \label{eq:difference_of_cosh}
      \abs{\cosh(\mu) - \cosh(\zeta)}
      &=\cosh(\abs{\mu})-\cosh(\abs{\zeta})
        =\int_{\abs{\zeta}}^{\abs{\mu}}{\sinh(\tau)\,d\tau}
        \geq \sinh(\abs{\zeta}) (\abs{\mu}-\abs{\zeta}).
    \end{align} 
    
  Because it is much simpler, we start with the \textbf{case $\boldsymbol \sigma\mathbf{=1}$}.
  We note that in this case $\mathcal{M}_\lambda$ consists of the points mapped to $\pm \lambda$.
  We distinguish three cases, depending on whether $\Re(w)$ is small and if
  $\Im(w)$ is close to a pole or not. 
    \paragraph{Case 1:} \emph{$(1+\theta)\kappa \cosh(\Re(w))< \lambda/2:$}
      The triangle inequality gives:
      \begin{align*}
        \abs{\kappa[\cosh(w)+ \ii \theta \sinh(w)] - \lambda}
        &  \geq \lambda - (1+\theta) \kappa \cosh(\Re(w)) 
        \geq \frac {\lambda}{2}.
      \end{align*}      
      \paragraph{Case 2:}
      \emph{$2(1+\theta)\kappa \cosh(\Re(w)) \geq  \lambda$ and
      there exists a point $p \in \mathcal{M}_\lambda$ with
      $\abs{\Im{w} - \Im{p}} \leq \varepsilon_1$
      for $\varepsilon_1$ sufficiently small.}
      We note that this implies that 
      $\abs{\Re(w)-\Re(p)}$ is positive.
      Due to the symmetry in \eqref{eq:poles_w_explicit}
      we may in addition assume that $\operatorname{sign}(\Re(w))=\operatorname{sign}(\Re(p))$.
      Writing $h(\eta):=\cos(\eta)-\theta\sin(\eta)$, 
      we note that
      $\abs{h(\Im(w))} > c> 0$ by \eqref{eq:w_pos_of_pole} and the fact that adding $\pi \ell$ might only change the sign. 
      If $h(\Im(p)) \geq 0$, we consider the real part of $\varphits(w)-\lambda$ to get: 
      \begin{multline*}
        \abs{\cosh(\Re(w))h(\Im(w))) - \lambda}\\
        \begin{aligned}
         &= \abs{\cosh(\Re(w))h(\Im(p)) - \lambda
            +\cosh(\Re(w))\big(h(\Im(w))-h(\Im{p})\big)} \\
          &\geq  \abs{\cosh(\Re(w))h(\Im{p}) - \lambda}
          - \cosh(\Re(w))\abs{h(\Im(w))-h(\Im{p})} \\
          &\stackrel{\mathclap{\eqref{eq:difference_of_cosh}}}{\geq} \min\big(\abs{\sinh(\Re(w))},\abs{\sinh(\Re(p))}\big) \abs{\Re(w) - \Re(p)}
          - 2\cosh(\Re(w))\varepsilon_1 \\
          &\gtrsim \lambda          
        \end{aligned}
      \end{multline*}
      where in the last step we chose $\varepsilon_1$ sufficiently small
      (but independent of $\lambda$).
      If $h(\Im(p))\leq 0$,  by continuity we can enforce $h(\Im(w))\leq 0$
      as long as $\varepsilon_1$ is sufficiently small.      
      The necessary calculation then is  even easier because $\varphits(w)$ maps to the left 
      half-plane.
      \paragraph{Case 3:} \emph{ $2(1+\theta) \cosh(\Re(w)) \geq  \lambda$ and
      $\abs{\Im(w) - \Im(p)} \geq \varepsilon_1 > 0$ for all $p \in \mathcal{M}_\lambda$.}
      We estimate imaginary part of $\varphits$.
      Since $\Im(w)$ has positive distance from the points
      in \eqref{eq:w_pos_of_pole}, we get
      $\abs{\sin(\Im(w))+\theta \cos(\Im(w))}>c>0$.
      Which in term gives
      $$
      \abs{\sinh(a)}\abs{\sin(\Im(w))+\cos(\Im(w))}
      \geq  c\abs{\sinh(\Re(w))} \gtrsim \lambda
      $$
      where the last part only holds for large enough
      cases of $\Re(w)$ not covered by Case~1.

    Now we show, how the proof has to be adapted for the \textbf{case $\boldsymbol \sigma \mathbf{=1/2}$},    
    again focusing on the asymptotic case of large $\lambda$.
    By Lemma~\ref{lemma:psi_hits_lambda_w_domain}(\ref{it:psi_hits_lambda_w_domain:bounds}), all the points
    $p \in \mathcal{M}_\lambda$ satisfy $\abs{\Re(p)} \sim \log(\lambda)$.
    Looking at the imaginary part of the defining
    equation for $p$ we get that
    \begin{align*}
      \abs{\cos(\Im(p))}&=\abs{\frac{\sinh(\Re(p)/2)}{\theta \cosh(\Re(p))} \sin(\Re(p))}
                      \lesssim \frac{e^{-a/2} }{\theta} \lesssim \lambda^{-1/2}.
    \end{align*}
    Thus, for any $\varepsilon_2 >0$,
    assuming $\lambda$ is sufficiently large,  all the points $p \in \mathcal{M}_\lambda$ satisfy $\cos(p)< \varepsilon_2$.

    We again have to distinguish three cases:
    
    \paragraph{Case 1:} \emph{$(1+\theta) \cosh(\Re(w))< \lambda/2:$}
        One can argue just like in the $\sigma=1$ case.
        \paragraph{Case 2:}
        \emph{$2(1+\theta) \cosh(\Re(w)) \geq  \lambda$ and
      there exists a point $p \in \mathcal{M}_\lambda$ with
      $\abs{\Im{w} - \Im{p}} \leq \varepsilon_1$
      for $\varepsilon_1$ sufficiently small.} We note that this implies that $\abs{\Re(w)-\Re(p)}$ is positive.
    Since $\abs{\cos(w)}< \varepsilon_2$, we note that $\abs{\sin(w)} > 1-\varepsilon_2$.
    By possibly adding $\ii \ell \pi$, we can write
    \begin{align}
      \label{eq:write_lambda_using_phi}
    \lambda=-\theta\cosh(\Re(p))\sin(\Im(p+\ell \ii\pi)) + \cosh(\Re(p/2))\sin(\Im(p+\ell \ii \pi)/2).
    \end{align}
    For large values of $\lambda$, the $\cosh(\Re(p))$ term
    is dominating. Therefore, we have $\operatorname{sign}(\Re(\varphits(p)))=-\operatorname{sign}(\sin(\Im(p)))$.
    By continuity we can  enforce that $\operatorname{sign}(\sin(\Im(w)))=\operatorname{sign}(\sin(\Im(p)))$.
    Since for the case $\Re(\varphits(w))\leq 0$ the statement is trivial,
    we  only have to consider the case $\operatorname{sign}(\sin(\Im(w)))=-1$ or $\ell=0$ in \eqref{eq:write_lambda_using_phi}.    
    We look at the real part of $\varphits(w)-\lambda$ to get: 
      \begin{align*}
        \big|-\theta\cosh(&\Re(w))\sin(\Im(w)) - \lambda + \cosh(\Re(w)/2)\cos(\Im(w)/2)\big|\\
        &\geq\begin{multlined}[t][12cm]
          \abs{-\theta\cosh(\Re(w))\sin(\Im(p)) - \theta \cosh(\Re(p)) \sin(\Im(p))} \\
         -\abs{\cosh(\Re(w))\big(\sin(\Im(w))-\sin(\Im{p})\big)} 
         - \bigO\big(\cosh(\Re(w)/2)\big)
         \end{multlined}
         \\
          &\gtrsim \min\Big(\abs{\sinh(\Re(w))},\abs{\sinh(\Re(p))}\Big) \abs{\Re(w) - \Re(p)}
          - C\cosh(\Re(w))\varepsilon_1 \\
          &\gtrsim \lambda                  
      \end{align*}
      where we absorbed the term $\cosh(\Re(w)/2)$ into $\cosh(\Re(w))\varepsilon_1$ by assuming $\lambda$
      sufficiently large and in the last step we chose $\varepsilon_1$ sufficiently small
      (but independent of $\lambda$).
    \paragraph{Case 3:} \emph{ $2(1+\theta) \cosh(\Re(w)) \geq  \lambda$ and
      $\abs{\Im{w} - \Im{p}} \geq \varepsilon_1 > 0$ for all $p \in \mathcal{M}_\lambda$.}
      Since all the points $p \in \mathcal{M}_\lambda$ satisfy
      $\abs{\cos(\Im(p))}<\varepsilon_2$ and for every zero of
      $\cos$, there exists a value $p \in \mathcal{M}_\lambda$ with $\Im(p)$ close to it,
      this means that $\abs{\cos(\Im(w))}\geq \delta_2$ for a constant $\delta_2$ depending only on $\varepsilon_1$ and $\varepsilon_2$.      
      We estimate
      $$
      \abs{\Im(\varphits(w))}
      \geq \theta\abs{\sinh(a)}\abs{\cos(\Im{w})} - \abs{\cosh(w/2)}
      \gtrsim \abs{\sinh(\Re(w))} \gtrsim \lambda
      $$
      where the last part only holds for large enough
      cases of $\Re(w)$ not covered by the first case. 
\end{proof}

\begin{lemma}
  \label{lemma:psi_hits_lambda}
  Fix $\lambda \geq \lambda_0 > \kappa$.
  Consider the set $$P_\lambda^y:=\Big\{ y \in \Dt: \psits(y)=\lambda \Big\},
  $$
  where $d(\theta)$ is taken sufficiently small.
  Then the following holds:
  \begin{enumerate}[(i)]
  \item
    \label{it:psi_hits_lambda:finite_number}
    For any $\nu_0 \in [0,d(\theta)]$ and $\delta > 0$
    there are only finitely many points $y_1,\dots,y_{N_p(\lambda,\delta)} \in P_\lambda^y$ satisfying
    $$
    \nu_0-\frac{\delta}{\ln(\lambda/\kappa)}<\abs{\Im(y_\ell)} < \nu_0+\frac{\delta}{\ln(\lambda/\kappa)}
    \qquad \forall \ell \in 1,\dots, N_p(\lambda,\delta)
    $$
    The number $N_p(\lambda,\delta)$  of such points  can be bounded
    by a  constant depending only on $\delta$, $\theta$, $\sigma$ and $d(\theta)$,
    but independently of $\lambda$ and $\nu_0$ . 
  \item
    \label{it:psi_hits_lambda:scale_of_imag_part}
    For $\lambda \in (\lambda_0,\Lambda)$, one can bound
    $ \displaystyle 
    \abs{\Im(y)}\geq \gamma(\Lambda) > 0
    \; \forall y \in P^y_\lambda,
    $
    with a constant $\gamma(\Lambda)$ depending
    on $\Lambda$, $\theta$, $\kappa$, $\lambda_0$.       
    For $\lambda$ sufficiently large, the following asymptotic holds:
    \begin{align}
      \label{eq:psi_hits_lambda:scale_of_imag_part}
      \abs{\Im(y)}\geq 
        \begin{cases}          
          \frac{\atan(\theta)}{\ln(\lambda/\kappa)}   - \bigO\big(\frac{1}{\ln^2(\lambda/\kappa)}\big)
          & \text{if $\sigma=1$},\\
          \frac{\pi}{2\ln(\lambda/\kappa)}  - \bigO\big(\frac{1}{\ln^2(\lambda/\kappa)}\big)  &\text{if $\sigma=1/2$}
        \end{cases}\qquad \forall y \in P^y_\lambda,
    \end{align}    
    where  the implied constants depend only on $\theta$, $\kappa$, $\lambda_0$ .
  \item
    \label{it:psi_hits_lambda:viable_path}
    There exists a parameter $d_{\lambda} \in (d(\theta)/2,d(\theta)]$ and a constant $c>0$ such that
    \begin{align}
      \abs{\psits(a \pm \ii d_{\lambda}) - \lambda}
      \geq c \,\kappa \lambda^{-1} \qquad \forall a \in \R.
    \end{align}
    $c$ depends on $\theta$, $\sigma$  and $\lambda_0$ but is independent of $\lambda$.
  \end{enumerate}
\end{lemma}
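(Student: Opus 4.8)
The plan is to transfer all three statements to the $w$-plane via the biholomorphism $y\mapsto w=\frac{\pi}{2}\sinh(y)$, which maps $\Dt$ bijectively onto $\Ht$ (Lemma~\ref{lemma:properties_of_sinh}\,(\ref{it:sinh_is_bijective})), so that $y\in P^y_\lambda$ is equivalent to $\varphits(w)=\lambda$, i.e. $\cosh(\sigma w)+\ii\theta\sinh(w)=\lambda/\kappa$, and then to invoke Lemma~\ref{lemma:psi_hits_lambda_w_domain} with $\lambda$ there replaced by $\lambda/\kappa$. The elementary dictionary is $\Re w=\frac{\pi}{2}\sinh(\Re y)\cos(\Im y)$, $\Im w=\frac{\pi}{2}\cosh(\Re y)\sin(\Im y)$, so that $\Im w/\Re w=\coth(\Re y)\tan(\Im y)$. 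Since Lemma~\ref{lemma:psi_hits_lambda_w_domain}\,(\ref{it:psi_hits_lambda_w_domain:bounds}) forces $|\Re w|\in[\ln(\lambda/\kappa)-c_1,\ln(\lambda/\kappa)+c_1]$ for every such $w$, and $\Im y$ is confined to $(-d(\theta),d(\theta))$ where $\cos(\Im y)$ is bounded below, we get $|\sinh(\Re y)|\sim\ln(\lambda/\kappa)$, hence $\cosh(\Re y)\sim\ln(\lambda/\kappa)$ and $\coth(\Re y)=1+\bigO(\ln(\lambda/\kappa)^{-2})$ along $P^y_\lambda$. This — that $\frac{\pi}{2}\sinh$ expands vertical distances near the solution set by the factor $\cosh(\Re y)\sim\ln(\lambda/\kappa)$ — is the device that converts the $\ln(\lambda/\kappa)^{-1}$ scales in the $y$-plane into $\bigO(1)$ scales in the $w$-plane, and is used throughout.

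For the finiteness statement (\ref{it:psi_hits_lambda:finite_number}): the image under $\frac{\pi}{2}\sinh$ of $\{\,y:\nu_0-\delta/\ln(\lambda/\kappa)<|\Im y|<\nu_0+\delta/\ln(\lambda/\kappa)\,\}$ intersected with $P^y_\lambda$ consists of $w$ with $|\Re w|\in\ln(\lambda/\kappa)+[-c_1,c_1]$ and, by the identity $\Im w/\Re w=\coth(\Re y)\tan(\Im y)$ together with the Lipschitz bound for $\tan$ on $(-d(\theta),d(\theta))$, with $|\Im w|$ ranging over an interval of length $\lesssim\delta+c_1\tan(d(\theta))=\bigO(1+\delta)$. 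Thus all these $w$ lie in a horizontal strip $\{r\le|\Im w|\le R\}$ with $R-r=\bigO(1+\delta)$, and Lemma~\ref{lemma:psi_hits_lambda_w_domain}\,(\ref{it:psi_hits_lambda_w_domain:number}) (or the explicit structure $w=p_j+\tfrac{2\ell}{\sigma}\pi\ii$ of Lemma~\ref{lemma:psi_hits_lambda_w_domain}\,(\ref{it:psi_hits_lambda_w_domain:distance})) bounds their number independently of $\lambda$ and $\nu_0$.

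For the lower bound on $|\Im y|$ (\ref{it:psi_hits_lambda:scale_of_imag_part}): from $|\Im y|\ge|\sin(\Im y)|=\frac{2|\Im w|}{\pi\cosh(\Re y)}$, together with $\cosh(\Re y)\le\frac{2|\Re w|}{\pi\cos(\Im y)}+1$ and Lemma~\ref{lemma:psi_hits_lambda_w_domain}\,(\ref{it:psi_hits_lambda_w_domain:bounds}), one already gets $|\Im y|\gtrsim1/\ln(\lambda/\kappa)$ for large $\lambda$. Feeding this crude estimate back (so $\Im y\to0$, hence $\cos(\Im y)=1+\bigO(\ln(\lambda/\kappa)^{-2})$, $\sin(\Im y)=\Im y\,(1+\bigO(\ln(\lambda/\kappa)^{-2}))$) and using the sharper $|\Re w|=\ln(\lambda/\kappa)+\bigO(1)$, $|\Im w|\ge\atan(\theta)$ for $\sigma=1$ resp. $|\Im w|\ge\pi/2-\bigO(\lambda^{-1/2})$ for $\sigma=1/2$, one bootstrap step produces $|\Im y|=\frac{|\Im w|}{|\Re w|}(1+\bigO(\ln(\lambda/\kappa)^{-2}))$ and hence the stated constants $\atan(\theta)/\ln(\lambda/\kappa)$ and $\pi/(2\ln(\lambda/\kappa))$ up to $\bigO(\ln(\lambda/\kappa)^{-2})$. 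The uniform bound $|\Im y|\ge\gamma(\Lambda)>0$ for $\lambda\in(\lambda_0,\Lambda)$ is a compactness argument: if $y_n\in P^y_{\lambda_n}$ with $\Im y_n\to0$, then $|\Re y_n|$ stays bounded by the double-exponential growth of Lemma~\ref{lemma:psi_growth}, so along a subsequence $y_n\to y_\star\in\Dt$ with $y_\star$ real and $\psits(y_\star)=\lim\lambda_n\in[\lambda_0,\Lambda]$; but on the real axis $\psits$ is real only at $y=0$, where $\psits(0)=\kappa<\lambda_0$ — a contradiction.

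For the viable path (\ref{it:psi_hits_lambda:viable_path}): by the finiteness statement, $\{\,|\Im y|:y\in P^y_\lambda\,\}\cap[d(\theta)/2,d(\theta)]$ meets every window of width $1/\ln(\lambda/\kappa)$ in at most $N_p$ points, so its $(c_0/\ln(\lambda/\kappa))$-neighbourhood has total length $\lesssim c_0$; choosing $c_0$ small we pick $d_\lambda\in(d(\theta)/2,d(\theta)]$ with $\operatorname{dist}(d_\lambda,\{|\Im y|:y\in P^y_\lambda\})\ge c_0/\ln(\lambda/\kappa)$, hence $\operatorname{dist}(a\pm\ii d_\lambda,P^y_\lambda)\ge c_0/\ln(\lambda/\kappa)$ for all $a\in\R$. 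Transferring the line $\{\Im y=d_\lambda\}$ to the $w$-plane through $\sinh u-\sinh u_0=2\cosh(\tfrac{u+u_0}{2})\sinh(\tfrac{u-u_0}{2})$, and using $|\cosh(\tfrac{u+u_0}{2})|\ge\cos(d(\theta))$ (the imaginary part of $\tfrac{u+u_0}{2}$ stays in $(-d(\theta),d(\theta))$) together with $|\sinh(\tfrac{u-u_0}{2})|\ge|\sin\tfrac{d_\lambda-\Im y_0}{2}|\gtrsim 1/\ln(\lambda/\kappa)$, the image curve keeps $w$-distance $\gtrsim1/\ln(\lambda/\kappa)$ from every pole of $\varphits$ in $\mathcal{M}_\lambda$; the imaginary axis is reached only at $\Re y=0$, where $|\varphits(w)|\le(\lambda_0+\kappa)/2$ already gives $|\psits(y)-\lambda|\ge\lambda-(\lambda_0+\kappa)/2\gtrsim\lambda$ directly. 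Lemma~\ref{lemma:phi_keeps_distance}, applied with $\delta\sim1/\ln(\lambda/\kappa)$ — whose constant $c(\delta)$ is linear in $\delta$, as the linearisation of $\varphits$ at its poles (where $|\varphits'|\sim\lambda$) shows — then yields $|\psits(a\pm\ii d_\lambda)-\lambda|=|\varphits(w)-\lambda|\gtrsim\lambda/\ln(\lambda/\kappa)$, which in particular exceeds $c\,\kappa\lambda^{-1}$ for $\lambda\ge\lambda_0$; moderate $\lambda$ is covered by the uniform bound above. I expect the main obstacle to be exactly this last part: balancing the pigeonhole choice of $d_\lambda$ against the non-conformal stretching of $\frac{\pi}{2}\sinh$ and tracking how the $\ln(\lambda/\kappa)$-scale propagates through Lemma~\ref{lemma:phi_keeps_distance}, together with pinning the sharp constants in the bootstrap of part~(\ref{it:psi_hits_lambda:scale_of_imag_part}).
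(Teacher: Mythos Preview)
Your treatment of parts~(\ref{it:psi_hits_lambda:finite_number}) and~(\ref{it:psi_hits_lambda:scale_of_imag_part}) is essentially the paper's argument: transfer to the $w$-plane, use Lemma~\ref{lemma:psi_hits_lambda_w_domain}\,(\ref{it:psi_hits_lambda_w_domain:bounds}) to pin $|\Re w|\sim\ln(\lambda/\kappa)$, and convert the $\ln(\lambda/\kappa)^{-1}$ vertical scale in $y$ to an $\bigO(1)$ scale in $w$. Your bootstrap for the sharp constants in~(\ref{it:psi_hits_lambda:scale_of_imag_part}) and your compactness argument for bounded $\lambda$ are fine (the paper only writes out the asymptotics).

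Part~(\ref{it:psi_hits_lambda:viable_path}) has a genuine gap, exactly where you flag it. In your estimate
\[
|\sinh u-\sinh u_0|=2\,\bigl|\cosh\tfrac{u+u_0}{2}\bigr|\,\bigl|\sinh\tfrac{u-u_0}{2}\bigr|
\]
you bound $|\cosh\tfrac{u+u_0}{2}|\ge\cos(d(\theta))$, which discards the factor $\cosh\bigl(\Re\tfrac{u+u_0}{2}\bigr)$. But near a pole $u_0\in P^y_\lambda$ one has $\sinh(\Re u_0)\sim\ln(\lambda/\kappa)$, so this factor is of order $\ln(\lambda/\kappa)$, and the \emph{correct} $w$-distance is $\gtrsim\mu>0$ \emph{independent of $\lambda$}, not merely $\gtrsim 1/\ln(\lambda/\kappa)$. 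You then compensate by asserting that the constant $c(\delta)$ in Lemma~\ref{lemma:phi_keeps_distance} is linear in $\delta$; this is plausible from linearising $\varphits$ at its simple zeros, but it is \emph{not} what Lemma~\ref{lemma:phi_keeps_distance} states, and proving it uniformly in $\lambda$ would require essentially the same work you are trying to avoid.

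The paper closes this gap by keeping the $\cosh$ expansion factor: it splits into the two cases $(\xi_p-\xi_\gamma)s_p\ge0$ and $<0$ (where $s_p=\operatorname{sign}(\nu_p-d_\lambda)$) and estimates respectively $\Im(\sinh y_p-\sinh y_\gamma)$ and $\Re(\sinh y_p-\sinh y_\gamma)$ directly, obtaining in each case a lower bound of the form $\cosh(\xi_p)\cdot\bigO(\ln(\lambda)^{-1})$ or $\sinh(\xi_p)\cdot\bigO(\ln(\lambda)^{-1})$. Since $\sinh(\xi_p)\sim\ln(\lambda)$, the product is bounded below by a constant $\mu$ independent of $\lambda$, and Lemma~\ref{lemma:phi_keeps_distance} can be invoked with \emph{fixed} $\delta=\mu$, sidestepping any question about the $\delta$-dependence of $c(\delta)$. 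Your pigeonhole choice of $d_\lambda$ is the same as the paper's; the missing ingredient is precisely this sharper transfer of the $y$-gap to a $\lambda$-uniform $w$-gap.
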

\begin{proof}
  For now, consider the case $\kappa=1$ and $\lambda_0>1$.
  Since $\psits(0)=1$, due to continuity, there exists a factor $d(\theta)>0$
  and $\varepsilon>0$ such that
  $$
  \abs{\psits(y)}< 1 + \varepsilon<\lambda_0<\lambda \qquad \forall \abs{y} \leq d(\theta).
  $$
  By taking $d(\theta)$ at last this small in the definition of $\Dt$ we can exclude the
  poles satisfying $\Re(w)=\Re(\sinh(y))=0$.
  
  \textbf{Ad~(\ref{it:psi_hits_lambda:finite_number}):}
  Since $\sinh$ is injective by
  Lemma~\ref{lemma:properties_of_sinh}(\ref{it:sinh_is_bijective}) we only need
  to count the points in $H_{\theta}$ which are mapped to $\lambda$ by $\varphits$.  
  Consider a point $w=a+\ii b$ in the $w$ domain and let  $\frac{\pi}{2}\sinh(y)=w$ with $y=\xi+\ii \nu \in \Dt$.
  Then
  \begin{align*}
    \abs{a}=\frac{\pi}{2}\abs{\sinh(\xi)}\abs{\cos(\nu)} 
                \quad \text{ and }\quad
   \abs{b}&=\frac{\pi}{2}\abs{\cosh(\xi)}\abs{\sin(\nu)}. 
  \end{align*}
  Simple computations give $\abs{a} \tan(\abs{\nu}) \leq \abs{b} \leq (\pi/2+\abs{a}) \tan(\abs{\nu})$,
  or, by inserting  Lemma~\ref{lemma:psi_hits_lambda_w_domain}(\ref{it:psi_hits_lambda_w_domain:bounds})
  \begin{align}
    \label{eq:psi_hits_lambda:simple_bound}
    \big(\ln(\lambda)-c_1\big) \tan(\abs{\nu}) \leq \abs{b} \leq (\pi/2+c_1+\ln(\lambda)) \tan(\abs{\nu}). 
  \end{align}
  For the length $L$ of this interval, we compute for $\nu_\pm:=\nu_0 \pm \delta \ln(\lambda/\kappa)^{-1}$
  (without changing the number of points, we may assume $0\leq \nu_- \leq \nu_+ \leq d(\theta)$):
  \begin{align*}
    L&=\ln(\lambda)\big(\tan(\abs{\nu_+})-\tan(\nu_-)\big)+
       (\pi/2+c_1) \tan(\nu_+) + c_1\tan(\nu_-)\\
     &\leq \ln(\lambda) \int_{\nu_-}^{{\nu_+}}{\frac{1}{\cos^2(\tau)}d\tau} + \pi+ 4c_1 
       \leq 8 \delta + \pi+ 4c_1
  \end{align*}
  where in the last step we used $\cos(\tau)>1/2$ for $\abs{\tau}<1/2$ and the definition
  of $\nu_\pm$.
  Since the length of this interval is bounded uniformly in $\lambda$, we
  can apply ~\ref{lemma:psi_hits_lambda_w_domain}(\ref{it:psi_hits_lambda_w_domain:number}) to bound
  $N_p(\lambda,\delta)$.

  \textbf{Ad~(\ref{it:psi_hits_lambda:scale_of_imag_part}):}
  We only show the asymptotics.
  Fix $y\in \Dt$ and write $w:=\frac{\pi}{2}\sinh(y)$.
  For $0<\abs{y}<1/2$ it can be easily seen that $\abs{y}+\frac{2}{3} \abs{y}^3> \tan(\abs{y}).$
  Now, if $\abs{\Im(y)}\geq 2\ln(\lambda)^{-1}$ there is nothing left to show. Otherwise, we can bound
  \begin{align*}
    \abs{\Im(y)}
    &\geq \abs{\tan(y)} - \frac{4}{3 \ln(\lambda)^{3}}
    \stackrel{\mathclap{\eqref{eq:psi_hits_lambda:simple_bound}}}{\geq }
      \frac{\abs{\Im(w)}}{1+c_1 + \ln(\lambda)}
      - \frac{4}{3 \ln(\lambda)^{3}}
     \geq \frac{\abs{\Im(w)}}{\ln(\lambda)} - \bigO\Big(\frac{1}{\ln(\lambda)^2}\Big).
    \end{align*}
    The result follows from Lemma~\ref{lemma:psi_hits_lambda_w_domain}(\ref{it:psi_hits_lambda_w_domain:bounds}).
    
  \textbf{Ad~(\ref{it:psi_hits_lambda:viable_path}):}
  For $d_{\lambda}=d(\theta)$, we can not guarantee that
  $\psi(\xi+\ii\,d_\lambda)$ does not hit the value $\lambda$. In this case,
  we have to modify $d_\lambda$ slightly to get robust estimates.
  For $d\in \R$, consider the hyperbolas
  \begin{align}
    \gamma_{d}(\xi):=\frac{\pi}{2}\sinh(\xi+\ii\,d) \quad  \xi \in \R.
  \end{align}
  
  In the light of Lemma~\ref{lemma:phi_keeps_distance} we need to ensure
  that $\operatorname{dist}(\gamma_{d_\lambda},w_p) \gtrsim 1$ for all $w_p \in \mathcal{M}_\lambda$.
  We will be looking for $d_\lambda$ in  a small strip around $d(\theta)$. To simplify notation
  we define the length
  $$
  \omega:=d(\theta)  \frac{\ln(\lambda_0)}{2\ln(\lambda)}
  \quad \text{ such that }  \quad d(\theta)/2 \leq d(\theta)-\omega \leq d(\theta).
  $$
  
  To make things symmetric
  with respect to the real axis,
  we consider $\widetilde{\mathcal{M}}_{\lambda}:=\mathcal{M}_{\lambda}- {\mathcal{M}}_{\lambda}$.
  It will therefore be sufficient to focus on the upper right quadrant of the complex plane.
  All other cases follow by symmetry.
  
  We write $\widetilde{\mathcal{M}}^y_\lambda:=\asinh(\frac{2}{\pi}\widetilde{\mathcal{M}}_{\lambda})$ for the corresponding points in $y$-domain.
  We start by noting that we can easily stay away from the problematic
  parts of the imaginary axis by making $d(\theta)$ sufficiently small,
  as if $\abs{\Re(\sinh(y))}<\varepsilon$ we have $\abs{\Im(\sinh(y))}<(1+\varepsilon)\sin(\Im(y))$; thus
  for small real parts we can ensure to fit between $(-b_0,b_0)$ on the imaginary axis. This
  also means that we can safely assume $\abs{\Re(w_\lambda)}>\varepsilon >0$ since our path will have
  already positive distance to other possible poles.

  By~(\ref{it:psi_hits_lambda:finite_number}),
  the number of points $y_\lambda$ in $\widetilde{\mathcal{M}}^y_\lambda$
  in the strip 
  $d(\theta)-\omega \leq \Im(y_\lambda)\leq d(\theta)$ can
  be bounded by a constant $N$, independent of $\lambda$.
  In order to also avoid points in $\widetilde{\mathcal{M}}_\lambda^y$ which are
  close but outside the critical strip  we also avoid the boundary points $d(\theta)-\omega$
  and $d(\theta)$.
  Since $N+2$ strips of width $\frac{\omega}{2N+4}$ can not cover a strip of width $\omega$,
  there exists a value $d_{\lambda}$ such that
  $$
  d(\theta)-\omega \leq d_{\lambda} \leq d(\theta)
  \quad \text{and} \quad
  \abs{\Im(y_{\lambda}) - d_{\lambda}} \geq \frac{\omega}{2(N+2)} \quad \forall y_\lambda \in \widetilde{\mathcal{M}}^y_\lambda.
  $$
  For ease of notation, we define  $\delta:=\ln(\lambda_0)/(4N+8)$  
  and note that $\abs{\Im(y_{\lambda}) - d_\lambda}\geq \delta \ln(\lambda)^{-1}$.  
  We show that
  \begin{align*}
    \operatorname{dist}(\gamma_{d_\lambda},w_p)
    &\geq \mu > 0
      \qquad \forall w_p \in \widetilde{\mathcal{M}}_\lambda
  \end{align*}
  for a constant $\mu > 0$ independent of $\lambda$.
  We fix $y_p:=\xi_{p}+  \ii \nu_{p} \in \widetilde{\mathcal{M}}_{\lambda}^y$ 
  and a point on $\gamma_{d_\lambda}$  denoted by $y_{\gamma}=\xi_{\gamma}+ d_\lambda \ii$, .
  We write $s_{p}:=\operatorname{sign}(\nu_p-d_{\lambda})$ and
  distinguish two cases:
  $(\xi_p - \xi_\gamma)s_{p} \leq 0$ and $(\xi_p - \xi_\gamma)s_{p}>0$.
  For symmetry reasons, we only consider the case $\xi_\gamma,\xi_p,\nu_{p} >0$.
  \textbf{If $\boldsymbol{(\xi_p - \xi_\gamma)s_{p} \geq 0}$}, we get:
    \begin{align*}
      \Im(s_{p}(\sinh(y_p)-\sinh(y_{\gamma})))
      &=s_{p} \big(\cosh(\xi_p)\sin(\nu_p) - \cosh(\xi_{\gamma})\sin(d_{\lambda}) \big)\\
      &=\cosh(\xi_p)s_{p}\big(\sin(\nu_p)-\sin(d_{\lambda})\big)
        + \underbrace{s_p \Big(\cosh(\xi_p)-\cosh(\xi_{\gamma})\Big)\sin(d_\lambda)}_{\geq 0} \\  
      &\geq \cosh(\xi_p)  s_{p} \int_{d_{\lambda}}^{\nu_p}{\cos(\tau) d\tau} 
      \gtrsim \cosh(\xi_p) \frac{\delta}{\ln(\lambda)}.
    \end{align*}
    For the \textbf{case $\boldsymbol{(\xi_p - \xi_\gamma)s_{p} <0}$}, we calculate:
      \begin{align*}
        -s_{p}\Re(\sinh(y_{p})-\sinh(y_\gamma))
        &=-s_{p}\sinh(\xi_p)\cos(\nu_p) + s_p\sinh(\xi_\gamma)\cos(d_{\lambda}) \\
        &=-s_{p}\sinh(\xi_p)\Big(\cos(\nu_p)-\cos(d_{\lambda})\big) + 
          \underbrace{s_{p}\Big(\sinh(\xi_\gamma)-\sinh(\xi_p)\Big)\cos(d_\lambda))}_{\geq 0}
        \\
        &\geq \sinh(\xi_p) s_{p} \int_{d_\lambda}^{\nu_p}{\sin(\tau) d\tau} 
        \gtrsim \sinh(\xi_p)
          \sin(d(\theta)/2)
          \frac{\delta}{\ln(\lambda)}.
      \end{align*}
      Since $\cosh(\xi_p) \geq \sinh(\xi_p) \gtrsim \min(\ln(\lambda),\varepsilon)$
      we can conclude that
    $$
    \operatorname{dist}(\gamma_{d_\lambda},w_\lambda)
    \gtrsim \min\Big(\ln(\lambda) \frac{1}{\ln(\lambda)}, \varepsilon \Big) \geq \mu > 0.
    $$
    We can now apply Lemma~\ref{lemma:phi_keeps_distance} to get to the final result.
    The general case $\kappa \neq 1$ follows by dividing the equation $\psits(y)=\lambda$ by $\kappa$.
    We can therefore just replace $\lambda$ by $\lambda/\kappa$ in all statements.
  \end{proof}

\end{appendices}

\end{document}